\documentclass [12pt, reqno]{amsart} 
\usepackage{times}
\usepackage{xcolor, amssymb,latexsym,amsmath,amsfonts, eucal}
\usepackage[dvipsnames,svgnames,table]{xcolor}
\usepackage{enumitem}

\allowdisplaybreaks[4] 

\newcommand\hlight[1]{\tikz[overlay, remember picture,baseline=-\the\dimexpr\fontdimen22\textfont2\relax]\node[rectangle,fill=white!50,rounded corners,fill opacity = 0.2,draw,thick,text opacity =1] {$#1$};}

\usepackage[bookmarksnumbered, colorlinks, linkcolor=red, citecolor=blue, plainpages]{hyperref}

\newtheorem{thm}{Theorem}[section]

\newtheorem{lem}[thm]{Lemma}
\newtheorem{prop}[thm]{Proposition}
\theoremstyle{definition}
\newtheorem{defn}[thm]{Definition}
\newtheorem{exmp}[thm]{Example}
\theoremstyle{remark}
\newtheorem{rem}[thm]{Remark}

 \DeclareMathOperator{\RE}{Re}
 \DeclareMathOperator{\IM}{Im}

 \newcommand{\M}{\mathcal{M}}

 \newcommand{\abs}[1]{\left\vert#1\right\vert}

 \newcommand{\norm}[1]{\left\Vert#1\right\Vert}

\begin{document}

\title[Noncommutative Maximal Ergodic Theorems]{Noncommutative Maximal Ergodic Theorems for Modulated $(C,\alpha)$-Averages with Operator-Valued Weights}

\author[Chattopadhyay]{Arup Chattopadhyay}
\address{\hskip-\parindent
Arup Chattopadhyay, Department of Mathematics, IIT Guwahati, Guwahati-781039, India.}

\email{arupchatt@iitg.ac.in; 2003arupchattopadhyay@gmail.com}

\author[De]{Debabrata De$^{\ast}$}
\address{\hskip-\parindent
Debabrata De, Department of Mathematics, IIT Guwahati, Guwahati-781039, India.}
\email{debabratade.ju@gmail.com; debabratade.ju@iitg.ac.in}

\thanks{${\ast}$ Corresponding author.}

\author[Shaw]{Disha Shaw}
\address{\hskip-\parindent
Disha Shaw, Department of Mathematics, IIT Guwahati, Guwahati-781039, India.}

\email{s.disha@iitg.ac.in; dishashaw2565@gmail.com}

\subjclass[2020]{47A35,  $($Primary$)$ 
 46L52, 46L10 $($Secondary$)$ }

\keywords{Maximal ergodic inequality, vector-valued weighted $(C,\alpha)$-averages, Operator Valued Besicovitch Sequences, Rota Theorem, noncommutative $L_p$-spaces }

\begin{abstract}
In this paper, we study modulated $(C,\alpha)$-ergodic averages with operator-valued weights and their subsequential versions in the setting of semifinite von Neumann algebras. Our framework combines $(C,\alpha)$-type modulation with operator-valued weights and extends both scalar-weighted and operator-valued weighted ergodic theories. We establish mean ergodic theorems and maximal inequalities for these averages on noncommutative $L_p$-spaces. We then study bilateral almost uniform and almost uniform convergence in noncommutative Orlicz spaces, including convergence along subsequences of density one. A key ingredient in our approach is the introduction of the $p$-convexity condition on the underlying Orlicz function. This condition enables us to transfer appropriate maximal estimates from noncommutative $L_p$-spaces to establish bilateral uniform equicontinuity in measure at zero on the corresponding noncommutative $p$-convex Orlicz spaces. Together with the noncommutative Banach principle, this yields the desired pointwise convergence results.  As an additional consequence of our $p$-convexity argument, we extend Rota's theorem to a larger class of noncommutative Orlicz spaces associated with the $p$-convex Orlicz functions considered in this article.
\end{abstract}

\maketitle

\section{Introduction}
The study of pointwise ergodic theorems in the noncommutative setting was initiated by Lance in his pioneering work \cite{La76}, where he proved a pointwise ergodic theorem for state-preserving automorphisms on von Neumann algebras.  Later, Yeadon \cite{Ye77} established bilaterally almost uniform convergence in noncommutative $L_1$-spaces by introducing weak type $(1,1)$ inequality
for the Ces\`aro averages
\begin{align*}
M_n x=\frac{1}{n} \sum_{k=0}^n T^k x, \qquad x\in L_1(\M),   
\end{align*}
where $T$ is a Dunford-Schwartz operator $(DS$ operator in short$)$ on a semifinite von Neumann algebra $\M$.
This was the first weak-type maximal inequality established in the noncommutative setting. 
Much later, Junge and Xu \cite{JX07}  obtained the individual (pointwise) ergodic theorem by extending Yeadon's results \cite{Ye77} to almost all other noncommutative $L_p$-spaces through their celebrated theory of noncommutative maximal functions. 
In particular, their work established strong-type $(p,p)$ maximal inequalities in the noncommutative setting for $p>1$.  Several further developments have been made in this direction. We refer interested readers to \cite{HLW21, HLX24, HS18, HRW23} and the references cited therein.

On the other hand, weighted ergodic theorems provide an important extension of the classical unweighted theory. 
In the commutative setting, Yoshimoto \cite{Yo20} studied the modulated $(C,\alpha)$-weighted ergodic averages
\begin{align}\label{Eq:ModulatedStandardAverages}
M_n(w,T,\alpha)x = \frac{1}{A_n^\alpha}\sum_{k=0}^n A_{n-k}^{\alpha-1}\,w_k\,T^k x,    
\end{align}
where $T$ is a $DS$ operator, $\mathbf{w}=\{w_k\}_{k\geq 0}$ belongs to the weighted class $W_q^\alpha$, and $A_n^\alpha$ denotes the $(C,\alpha)$-coefficients.
Recently, Hong, Jing, and Zhang \cite{HJZ26} extended this study to the noncommutative setting, establishing mean ergodic convergence, strong- and weak-type maximal inequalities, and bilateral almost uniform (resp. almost uniform) convergence for Besicovitch classes on noncommutative $L^p$-spaces.
Their results cover $\alpha>0$ and $1\leq p\leq \infty$, with particular significance for $0<\alpha<1$, where the admissible range of $p$ depends on both $\alpha$ and the weight exponent $q$, and extend the earlier results of Yeadon \cite{Ye77, Ye80} and Junge-Xu \cite{JX07}.

Throughout this paper, $\M\subseteq \mathbf{B}(\mathcal{H})$ denotes a semifinite von Neumann algebra with separable predual and $\tau$ denotes a faithful normal semifinite $(f.n.s.$ in short$)$ trace on $\M$. Let $\Phi:[0,\infty)\rightarrow [0,\infty)$ be an Orlicz function such that $\Phi$ is continuous at $0$ with $\Phi(0)=0$ and $\Phi(t)>0$ for all $t>0$. Further, for $p\geq 1$,  $\Phi$ is said to be \emph{$p$-convex} if the function $ t\mapsto\Phi(t^\frac{1}{p})$,  $t\geq 0$, is convex. For $1\leq p\leq \infty$ and an Orlicz function $\Phi$,  let $L_p(\M)$ and $L_\Phi(\M)$ respectively denote the noncommutative $L_p$-space and noncommutative Orlicz space associated with the pair $(\M, \tau)$.

In a different extension replacing scalar weights with operator-valued weights, Bikram and Saha   \cite{BS24} considered (bounded) center-valued Besicovitch sequences and studied the corresponding weighted ergodic averages along subsequences of density one. They established individual ergodic theorems for these averages on noncommutative Orlicz spaces associated with $DS$ operators. More precisely, 
for a positive $DS$ operator $T$ and a sequence $\mathbf{k}=\{k_j\}_{j\geq 0}$ of natural numbers, they consider the following averages
\begin{align}\label{Eq:BikramSahaAverages1}
 A_n(\mathbf{b}, \mathbf{d}, x):= \frac{1}{n} \sum_{k=0}^{n-1} T^k (b_k x d_k),  \qquad	A_n(\mathbf{b}, x):= \frac{1}{n} \sum_{k=0}^{n-1} T^k (b_k x),
\end{align}
and 
\begin{align}\label{Eq:BikramSahaAverages2}
A_n^{\mathbf{k}}(\mathbf{b}, \mathbf{d}, x):= \frac{1}{n} \sum_{j=0}^{n-1}  T^{k_j} (b_{k_j} x d_{k_j}), \qquad 	A_n^{\mathbf{k}}(\mathbf{b}, x):= \frac{1}{n} \sum_{j=0}^{n-1} T^{k_j} (b_{k_j} x),
\end{align}
for all $n \in \mathbb{N}$ and $x \in L^1(\M) + \M$, 
where $\mathbf{b}=\{b_k\}_{k\geq 0}$ and $\mathbf{d}=\{d_k\}_{k\geq 0}$ are $\mathcal{Z}(\M)$-valued bounded
sequences. They prove a weak-type $(1,1)$ maximal inequality for the above averages. 
Using this inequality, they establish bilateral uniform equicontinuity in measure at zero on noncommutative Orlicz spaces. Then they apply a Banach principle to obtain individual convergence in noncommutative Orlicz spaces.
This result extends the works of O'Brien \cite{Ob21} 
on scaler-valued weighted ergodic averages and subsequential weighted ergodic averages which were considered on noncommutative $L_p$-spaces, $1\leq p<\infty$. Moreover, the results of \cite{BS24} extend the work of \c C\"omez and Litvinov \cite{CL13} on vector-valued weighted ergodic averages for $1<p<\infty$ to the setting of noncommutative Orlicz spaces.
For further developments in this direction, we refer the reader to 
\cite{LM01, CL06, CL13, CL15, Ob21, Ob23, Ob24, Li24, Li26} and the cited references therein.

The purpose of this paper is to combine the two approaches developed by Hong, Jing, and Zhang \cite{HJZ26} and by Bikram and Saha \cite{BS24} into a unified framework. To proceed in this direction, we consider operator-valued weighted ergodic averages and their subsequential versions. Our framework captures the interplay between $(C,\alpha)$-type modulation and operator-valued weights. 
We study mean ergodic theorems, maximal inequalities, and pointwise convergence in $p$-convex noncommutative Orlicz spaces. We also investigate the corresponding convergence along subsequences of density one. The $p$-convexity of the underlying Orlicz function plays a crucial role in our approach.
The main novelty of our approach lies in allowing operator-valued weights in the study of modulated $(C,\alpha)$-ergodic averages. This provides a common framework that connects the scalar-weighted $(C,\alpha)$-ergodic theory of Hong, Jing, and Zhang \cite{HJZ26} with the operator-valued weighted and subsequential ergodic theory due to Bikram and Saha \cite{BS24}. In particular, our results extend both settings under suitable assumptions on the weights and the underlying Orlicz spaces. In a different direction, we extend the class of noncommutative Orlicz spaces for which the noncommutative Rota's theorem holds, with $p$-convexity playing a crucial role in this extension. We refer to \cite{An06, Hu08, Hu09, Bi24} for related results in this direction.

To describe our setting, let $T$ be a $DS$ operator on $\M$.
For $\alpha>0$ and $1\leq q< \infty$, we define the von Neumann algebra valued weighted class $W_{q}^\alpha(\M)$ as collection of all sequences of operators $\mathbf{b}=\{b_k\}_{k\geq 0}\subseteq \M$ with the seminorm
\begin{align*}
\norm{b}_{W_{q}^\alpha(\M)}   =\left(\limsup_{n\rightarrow\infty} \frac{1}{A_n^\alpha}\sum_{k=0}^n\, A_{n-k}^{\alpha-1} \, \norm{b_k}_\infty^q \right)^\frac{1}{q}<\infty.
\end{align*}
When $q=\infty$, we use the convention that $W_{\infty}^\alpha(\M)=\ell_\infty(\M)$ equipped with the standard norm.
Given a complex sequence $\mathbf{w}=\{w_k\}_{k\geq 0}$, two sequences of operators $\mathbf{b}=\{b_k\}_{k\geq 0}\subseteq \M$ and $\mathbf{d}=\{d_k\}_{k\geq 0}\subseteq \M$, and a sequence $\mathbf{k}=\{k_j\}_{j\geq 0}$ of natural numbers, we consider the following modulated weighted averages:
\begin{align}\label{Eq:ModulatedWeightedAverage}
&M_n(\mathbf{b}, \mathbf{d}, \mathbf{w}, T, \alpha) x
=\frac{1}{A_n^\alpha}\sum_{k=0}^n\, A_{n-k}^{\alpha-1} \,  w_k T^k(b_k x d_k),  \quad x\in L_1(\M)+\M, \\
\nonumber\noindent \text{and}\qquad\qquad\qquad&\\
&\label{Eq:ModulatedWeightedAverage2}
M_n^{\mathbf{k}}(\mathbf{b}, \mathbf{d}, \mathbf{w}, T, \alpha) x
=\frac{1}{A_n^\alpha}\sum_{j=0}^{n}\, A_{n-j}^{\alpha-1} \,  w_{k_j}T^{k_j}(b_{k_j} x d_{k_j}),  \quad x\in L_1(\M)+\M, 
\end{align}
where $(C,\alpha)$-coefficients are given by
\begin{align*}
A_0^\alpha=A_n^0=1 \quad\text{ and }\quad A_n^\alpha=\dfrac{(\alpha+1)\cdots (\alpha+n)}{n!}, \quad\quad \alpha>-1, n\geq 0.    
\end{align*}
We also introduce the class of operator-valued Besicovitch sequences $\mathcal{U}_f(q,\alpha)\subseteq W_q^\alpha(\M)$, defined as a closure of trigonometric polynomials $($see, Section \ref{Subsec:OpValuedBesicovitchWeight}$)$ in $W_q^\alpha(\M)$, where $1\leq q<\infty$.

\subsection{Main Results}
We now state the main results of this paper, including the mean ergodic theorem and maximal inequalities. Moreover, we provide almost uniform and bilateral almost uniform convergences of the weighted ergodic averages on suitable noncommutative Orlicz spaces. Finally, we state an extension of the class of noncommutative Orlicz spaces for which Rota's theorem holds.

We begin with our first result on the mean ergodic theorem. 
\begin{thm}\label{Thm:ConvergenceInLpNormHongType}
Let $T$ be a $DS$ operator on $\M$. For $\alpha > 0$, let $\mathbf{b}=\{b_k\}_{k \geq 0}$ and $\mathbf{d}=\{d_k\}_{k \geq 0}$ belong to $U_f(1,\alpha)$ such that atleast one of them is bounded. Then for all $x \in L_p(\mathcal{M})$ with $1<p<\infty$, the sequence $\{M_n(\mathbf{b},\mathbf{d},\mathbf{1},T,\alpha)x\}_{n\geq 0}$ converges in $L_p(\mathcal{M})$-norm. Moreover, if $\M$ is finite, the same result holds for $p=1$. In the case $p=\infty$, when $\M$ is finite, the above convergence is with respect to the $w^*$-topology.
\end{thm}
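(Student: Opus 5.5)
The plan is to reduce to trigonometric-polynomial weights by density and then handle a single modulated $(C,\alpha)$-average with scalar weight $\lambda^k$. Suppose first that $\mathbf{b}$ and $\mathbf{d}$ are trigonometric polynomials, i.e. finite sums $b_k=\sum_i \lambda_i^k \beta_i$ and $d_k=\sum_l \mu_l^k \delta_l$ with $|\lambda_i|=|\mu_l|=1$ and $\beta_i,\delta_l\in\M$. Then
\begin{align*}
M_n(\mathbf{b},\mathbf{d},\mathbf{1},T,\alpha)x=\sum_{i,l}\frac{1}{A_n^\alpha}\sum_{k=0}^n A_{n-k}^{\alpha-1}(\lambda_i\mu_l)^k T^k(\beta_i x\delta_l).
\end{align*}
Each $y=\beta_i x\delta_l$ lies in $L_p(\M)$. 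The problem therefore reduces to the $L_p$-norm convergence of the scalar modulated $(C,\alpha)$-averages $\frac{1}{A_n^\alpha}\sum_{k} A_{n-k}^{\alpha-1}\lambda^k T^k y$ with $|\lambda|=1$. For $1<p<\infty$ I would obtain this from the result of Hong, Jing and Zhang \cite{HJZ26} for the unimodular weight $\{\lambda^k\}$, which lies in their Besicovitch class. Directly, the argument runs as follows. The operator $\lambda T$ is a contraction on the reflexive space $L_p(\M)$, so the mean ergodic decomposition $L_p=\ker(I-\lambda T)\oplus\overline{\mathrm{ran}(I-\lambda T)}$ holds. On the kernel the averages are constant, because $\sum_k A_{n-k}^{\alpha-1}=A_n^\alpha$. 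On the range, Abel summation together with $A_n^{\alpha-1}/A_n^\alpha\to0$ gives convergence to $0$; the averages are uniformly bounded, since $A_{n-k}^{\alpha-1}\ge0$ for $\alpha\ge1$ and a known bound on $\sum_k|A_{n-k}^{\alpha-1}|/A_n^\alpha$ covers $0<\alpha<1$.

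Next comes the approximation step. Suppose $\mathbf{b}$ is bounded, with $\sup_k\|b_k\|_\infty\le C$, and $\mathbf{d}\in U_f(1,\alpha)$. Choose a trigonometric polynomial $\mathbf{d}'$ with $\|\mathbf{d}-\mathbf{d}'\|_{W_1^\alpha}<\varepsilon$. Using $\|T^k(b_kx(d_k-d'_k))\|_p\le C\|x\|_p\|d_k-d'_k\|_\infty$, we get
\begin{align*}
\limsup_n\|M_n(\mathbf{b},\mathbf{d})x-M_n(\mathbf{b},\mathbf{d}')x\|_p\le C\|x\|_p\,\varepsilon .
\end{align*}
Here I assume $\alpha\ge1$ so that the coefficients are nonnegative; for $0<\alpha<1$ this is delicate, since $A^{\alpha-1}_{n-k}$ is still positive for $\alpha>0$. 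Indeed $A_m^{\alpha-1}=\prod_{j=1}^m(\alpha-1+j)/j>0$ whenever $\alpha>0$, so all coefficients are positive and the $W_1^\alpha$ seminorm controls this term directly. Next approximate $\mathbf{b}$ by a trigonometric polynomial $\mathbf{b}'$. Since $\mathbf{d}'$ is bounded, the term with $\mathbf{b}-\mathbf{b}'$ is handled symmetrically. If instead $\mathbf{d}$ is the bounded one, swap the order of the two approximations. Hence $\{M_n x\}$ is within $O(\varepsilon)$, in $\limsup$, of a convergent sequence, so it is Cauchy in $L_p(\M)$ and converges.

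For finite $\M$ and $p=1$, the same approximation works verbatim. The only issue is convergence for polynomial weights on $L_1$, which is not reflexive. I would use density of $L_2(\M)\cap L_1(\M)$ in $L_1(\M)$ (finite trace), the uniform bound $\|M_n\|_{L_1\to L_1}\le \sup\|b\|\sup\|d\|$ for polynomial weights, and the $L_2$ convergence together with $\|\cdot\|_1\le\tau(1)^{1/2}\|\cdot\|_2$. For $p=\infty$ and finite $\M$, I would show that the limit defines a bounded linear operator and test against $y\in L_1(\M)$. The dual averages $\tau(M_n(x)\,y)=\tau(x\,\widetilde M_n y)$ involve the predual operator $T_*$, itself a $DS$ operator, with weights $d_k,b_k$ appearing on swapped sides. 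By the $L_1$ case these dual averages converge in $L_1$, which gives $w^*$-convergence of $M_nx$, with the limit identified by boundedness of $\|M_nx\|_\infty$.

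The main obstacle I expect is the polynomial-weight case with $0<\alpha<1$. There, uniform boundedness and the vanishing on $\overline{\mathrm{ran}(I-\lambda T)}$ require estimates on the $(C,\alpha)$ kernel, namely $\sum_k A^{\alpha-1}_{n-k}=A^\alpha_n$ and summation by parts using $A^{\alpha-1}_m\downarrow$. I would first try to cite \cite{HJZ26} for this step and, failing that, do the Abel-summation computation directly.
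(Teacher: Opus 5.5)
For $1<p<\infty$ your argument is the paper's argument. The finite spectral decompositions $u^k=\sum_i\lambda_i^kP_i$ of elements of $\mathcal{U}_f$ put the paper's trigonometric polynomials into your form; this is the content hidden in your ``i.e.''. The scalar unimodular modulations are then handled by \cite[Theorem 1.1]{HJZ26}. Your two-stage approximation, in which the bounded sequence absorbs the cross term, is the paper's Cauchy estimate. Your self-contained replacements are also sound, and the obstacle you anticipate for $0<\alpha<1$ does not arise. Since $A_m^{\alpha-1}>0$ and $\sum_k A_{n-k}^{\alpha-1}=A_n^\alpha$, you get the uniform bound $1$ outright. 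Monotonicity of $m\mapsto A_m^{\alpha-1}$ then bounds the Abel-summation remainder by $2\max\{1,A_n^{\alpha-1}\}\norm{z}_p=o(A_n^\alpha)$. Deriving $p=1$ from $p=2$ by density (with $\tau(1)<\infty$) is likewise a valid substitute for citing the $L_1$ part of \cite{HJZ26}.

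Your $p=\infty$ step, however, misidentifies the dual averages. With $T_*$ the predual of $T$, cyclicity gives
\begin{align*}
\tau\bigl(y\,M_n(\mathbf{b},\mathbf{d},\mathbf{1},T,\alpha)x\bigr)=\tau\Bigl(x\,\frac{1}{A_n^\alpha}\sum_{k=0}^n A_{n-k}^{\alpha-1}\,d_k\,T_*^k(y)\,b_k\Bigr).
\end{align*}
So the weights multiply the iterates from outside. This is not $M_n(\mathbf{d},\mathbf{b},\mathbf{1},T_*,\alpha)y$, whose summands are $T_*^k(d_kyb_k)$, and the two differ in general: for an automorphism $T$ one has $T_*^k(d_kyb_k)=T_*^k(d_k)\,T_*^k(y)\,T_*^k(b_k)$. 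Hence the $L_1$ case of the theorem cannot simply be quoted.

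The repair is to rerun your argument for these transposed averages. For polynomial weights they are finite combinations of $\delta\bigl(\frac{1}{A_n^\alpha}\sum_k A_{n-k}^{\alpha-1}(\mu\lambda)^kT_*^k(y)\bigr)\beta$ with fixed $\beta,\delta\in\M$. These converge in $L_1(\M)$ by the scalar result for $T_*$. The approximation step is unchanged, since $\norm{d_kT_*^k(y)b_k}_1\leq\norm{d_k}_\infty\norm{y}_1\norm{b_k}_\infty$.

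You should also justify that $T_*$ exists as a $DS$ operator, i.e.\ that $T|_{\M}$ is normal. For $\tau(1)<\infty$ this holds for two reasons. The Banach adjoint of $T$ on $L_2(\M)$ is $L_1$-contractive, as one sees by testing against the unit ball of $\M$. It also agrees on $\M$ with the dual of $T|_{L_1(\M)}$. The paper uses this fact tacitly too.

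Once repaired, your route obtains the $p=\infty$ statement from the $L_1$ theory alone. The paper instead approximates inside the pairing and invokes the $w^*$ part of \cite[Theorem 1.1]{HJZ26} through Lemma \ref{Lem:ConvergenceofTrigonometricPolyNomial}.
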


One of the main purposes of this paper is to study pointwise convergence in Orlicz spaces. For this, we first establish a maximal inequality. Our result is analogous to the maximal inequality obtained in \cite{HJZ26}, but its proof requires some new techniques that are not available in \cite{HJZ26}.

\begin{thm}\label{Thm:MaximalInequalityForVectorWeightedSequenceSpace}
Let $\alpha >0, 1 \leq q \leq \infty$ and $q^\prime$ be the conjugate of $q$, that is, $\frac{1}{q}+\frac{1}{q^\prime}=1$. Let $T$ be a positive $DS$ operator on $\M$, and
$\mathbf{b}=\{b_k\}_{k\geq 0}$ and $\mathbf{d}=\{d_k\}_{k\geq 0}$ belong to $W_q^{\alpha}(\mathcal{Z}(\M))$ such that atleast one of them is bounded.
Then the following statements hold:
\begin{enumerate}[label=(\roman*)]
\item If $1<q\leq \infty$, then for every $x \in L_p(\mathcal{M})$ with $max\{\frac{q^\prime}{\alpha}, q^\prime\} < p \leq \infty$, there exists a positive constant   $C_{\alpha,p, q}$ depending only on $\alpha, p$ and $q$, such that
\begin{align*}
\norm{\left(M_n(\mathbf{b}, \mathbf{d}, \mathbf{1},T, \alpha)x\right)_{n\geq 0}}_{L_p(\mathcal{M}; \ell_{\infty})} \leq C_{\alpha,p,q}\norm{x}_p.
\end{align*}
Moreover, for $x\in L_p(\mathcal{M})$ with $\max\{\frac{2q^\prime}{\alpha},2q^\prime\}<p\leq\infty$, we have
\begin{align*}
\norm{\left(M_n(\mathbf{b}, \mathbf{d}, \mathbf{1},T, \alpha)x\right)_{n\geq 0}}_{L_p(\mathcal{M}; \ell_{\infty}^c)} \leq \sqrt{C_{\alpha,p,q}}\norm{x}_p.   
\end{align*}

\item Let $\alpha \geq 1$. If $1<q\leq \infty$, then for every $x \in L_p(\mathcal{M})$ with $p = q^\prime$ $($resp. $p=2q^\prime)$ and for 
$\varepsilon>0$, there exists a projection $e \in \mathcal{P}(\M)$ such that
\begin{align*}
 \tau(e^\perp)\leq \frac{16\norm{x}_p^p}{\varepsilon^p} \quad \text{ and }\quad &\sup_{n\geq 0}\norm{e \left(M_n(\mathbf{1},\mathbf{1}, \mathbf{w}, T, \alpha) x\right) e}_\infty
 \leq \varepsilon \\
 (\text{resp.  }&\sup_{n\geq 0}\norm{ \left(M_n(\mathbf{1},\mathbf{1}, \mathbf{w}, T, \alpha) x\right) e}_\infty\lesssim_\alpha \varepsilon).
\end{align*}

\item If $q=1$ and $\alpha\geq 1$, then for every $x \in L_p(\mathcal{M})$ with $p = q^\prime$ $($resp. $p=2q^\prime)$,
the above weak type inequality is replaced by the strong type in $(i)$.
\end{enumerate}
\end{thm}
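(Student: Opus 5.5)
The strategy is to reduce everything to maximal inequalities for positive, scalar-weighted Ces\`aro-type averages of $T$ acting on a positive element built from $x$. The reduction uses that the weights are central, a H\"older inequality in the $(C,\alpha)$ sum, and the known bounds of Junge--Xu and Hong--Jing--Zhang.

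\textbf{Step 1: reduce to a positive majorant.} Since $b_k,d_k\in\mathcal{Z}(\M)$, we have $b_kxd_k=b_kd_kx$. Set $c_k=b_kd_k$; if, say, $\mathbf{d}$ is bounded, then $\norm{c_k}_\infty\le \norm{\mathbf d}_\infty\norm{b_k}_\infty$, so $\{c_k\}\in W_q^\alpha(\mathcal Z(\M))$. Next decompose $x=x_1-x_2+i(x_3-x_4)$ with $x_j\ge0$ and $\norm{x_j}_p\le\norm{x}_p$. Write each central $c_k$ as a combination of four positive central contractions times $\norm{c_k}_\infty$. This reduces the problem to positive $x$ and $0\le c_k\le \norm{c_k}_\infty\mathbf 1$ central. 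By positivity of $T$ and centrality of $c_k$,
\begin{align*}
0\le T^k(c_kx)=T^k(x^{1/2}c_kx^{1/2})\le \norm{c_k}_\infty T^k x .
\end{align*}
Hence $0\le M_n(\mathbf b,\mathbf d,\mathbf 1,T,\alpha)x\le \frac{1}{A_n^\alpha}\sum_{k}A_{n-k}^{\alpha-1}\norm{c_k}_\infty T^kx$, and for positive sequences the $L_p(\ell_\infty)$ norm is monotone. This reduces (i) to the scalar weight $w_k=\norm{c_k}_\infty\ge0$ in $W^\alpha_q$.

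\textbf{Step 2: the scalar maximal bound.} By H\"older's inequality in $k$ against the probability weights $A_{n-k}^{\alpha-1}/A_n^\alpha$ (valid since $\sum_{k}A^{\alpha-1}_{n-k}=A^\alpha_n$), the scalar average is dominated by
\begin{align*}
\Big(\frac{1}{A_n^\alpha}\sum_k A_{n-k}^{\alpha-1}w_k^q\Big)^{1/q}\Big(\frac{1}{A_n^\alpha}\sum_k A_{n-k}^{\alpha-1}(T^kx)^{q'}\Big)^{1/q'} .
\end{align*}
In the noncommutative setting this must be done operator-wise. I would mimic \cite{HJZ26} rather than literally take $q'$-th powers. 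The plan is to use the known estimate for the maximal $(C,\alpha)$ averages with weighted kernel, $\norm{\sup_n^+ \frac1{A_n^\alpha}\sum A_{n-k}^{\alpha-1}w_kT^kx}_p\lesssim \norm{w}_{W^\alpha_q}\norm{x}_p$ for $p>\max\{q'/\alpha,q'\}$, as proved there for positive scalar weights. Since $\limsup$ of the weight means is finite, the first factor is bounded uniformly in $n$ up to finitely many $n$, which are handled trivially. The restriction $p>q'/\alpha$ arises because for $\alpha<1$ the kernel $A^{\alpha-1}_{n-k}/A_n^\alpha$ is singular near $k=n$; it is controlled via the Riesz-type decomposition of $(C,\alpha)$ means into Ces\`aro means in \cite{HJZ26}. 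The column-type estimate follows from the standard fact that for $y_n\ge0$, $\norm{(y_n)}_{L_p(\ell^c_\infty)}^2=\norm{(y_n^*y_n)}$-type bounds via $|y_n|^2\le \norm{y_n}\,y_n$. Concretely, the Kadison--Schwarz inequality gives $|M_nx|^2\lesssim M_n(|x|^2)$ for positive maps, and applying the first part to $|x|^2\in L_{p/2}$ then explains the condition $p>2\max\{q'/\alpha,q'\}$ and the constant $\sqrt{C}$.

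\textbf{Steps 3--4: the endpoint cases.} For (ii), with $\alpha\ge1$ and $p=q'$, I would use that $A^{\alpha-1}_{n-k}$ is nonincreasing in $k$, which writes the $(C,\alpha)$ mean as a convex combination of Ces\`aro means. H\"older then dominates the average by a constant times $\big(\frac1{N}\sum_{k<N}T^k x^{q'}\big)^{1/q'}$-type Ces\`aro means of $x^{p}\in L_1$, and I would apply Yeadon's weak $(1,1)$ inequality to $x^p$. The projection $e$ with $\tau(e^\perp)\le 16\norm{x}_p^p/\varepsilon^p$ comes from Yeadon's bound at level $\varepsilon^p$; the factor 16 absorbs the four-part decomposition. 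The column version uses $|M_nx|^2\le M_n|x|^2$ as above. For (iii), $q=1$ gives $q'=\infty$, bounded weights suffice, and the strong bound follows from Junge--Xu directly.

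\textbf{Main obstacle.} The main difficulty is the operator H\"older step: passing from $\frac1{A_n^\alpha}\sum A^{\alpha-1}_{n-k}w_kT^kx$ to a maximal function of $x^{q'}$, which is not available operator-wise. I would first try to realize it via the weighted convex operator inequality $(\sum\lambda_k y_k)^{r}\le\sum\lambda_k y_k^{r}$ for $r\ge1$ (operator convexity of $t^r$ holds only for $1\le r\le2$), combined with $(T^kx)^{r}\le T^k(x^r)$ (Jensen for positive contractions, again requiring operator convexity). If that fails for $r>2$, I would fall back on interpolation between the cases settled in \cite{HJZ26}.
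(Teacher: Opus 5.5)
Your Step 1, combined with the black-box use of \cite[Theorem 1.2]{HJZ26}, is exactly the paper's proof of the $L_p(\M;\ell_\infty)$ bound in (i). The paper factors $b_kd_k=\norm{b_kd_k}_\infty\widetilde b_k$ with $\widetilde b_k$ a central contraction, and splits $\widetilde b_k$ through the positive central elements $\RE(\widetilde b_k)+1$ and $\IM(\widetilde b_k)+1$; your four positive central contractions do the same job. It then dominates each positive piece by $2M_n(\mathbf 1,\mathbf 1,\mathbf w,T,\alpha)x$ with $w_k=\norm{b_kd_k}_\infty$ and quotes \cite{HJZ26}. For (ii) and (iii) it simply quotes \cite{HJZ26} again.

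Note that (ii) as stated involves only the scalar weight $\mathbf w$, so it \emph{is} the result of \cite{HJZ26} and needs no reproof. Item (iii) is the case $p=q'=\infty$, where the strong bound is just $\norm{M_nx}_\infty\le\sup_m\bigl(\frac{1}{A_m^\alpha}\sum_{k=0}^mA_{m-k}^{\alpha-1}w_k\bigr)\norm{x}_\infty$. This is finite because $\mathbf w\in W_1^\alpha$, not because the weights are bounded (they need not be), and Junge--Xu plays no role.

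You genuinely depart from the paper in the column estimate, and there your route is the right one. The paper performs the domination only in $L_p(\M;\ell_\infty)$ and then asserts the column bound as well. However, $0\le y_n\le z_n$ does not by itself control $\norm{(y_n)}_{L_p(\M;\ell_\infty^c)}=\norm{(y_n^2)}_{L_{p/2}(\M;\ell_\infty)}^{1/2}$, since $t\mapsto t^2$ is not operator monotone. Your argument closes this: apply Kadison--Schwarz, $\Psi_n(x)^2\le\norm{\Psi_n(1)}_\infty\Psi_n(x^2)$, to each positive piece $\Psi_n(y)=\frac{1}{A_n^\alpha}\sum_kA_{n-k}^{\alpha-1}T^k(c_k^{(j)}y)$. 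Then dominate $\Psi_n(x^2)$ by the scalar average of $x^2$ and use the row bound in $L_{p/2}(\M)$. This gives a constant of the form $(\abs{\mathbf w}_1C_{\alpha,p/2,q})^{1/2}$.

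Your ``main obstacle'' is not one. It disappears once you quote \cite{HJZ26}. If you do want a self-contained proof of (ii), the operator H\"older inequality you need is Lemma \ref{Lem:HolderInequalityForOperator}, valid for every exponent. It can be derived from operator concavity of $t\mapsto t^{1/r}$ (Jensen's inequality for positive maps), not from operator convexity of $t^r$. So there is no restriction $r\le 2$ and no interpolation fallback is needed. In such a reproof you could proceed as follows:
\begin{enumerate}
\item apply H\"older with the $(C,\alpha)$ weights;
\item write the unweighted $(C,\alpha)$ mean of $x^{q'}$ as a convex combination of Ces\`aro means, using $\alpha\ge1$;
\item apply Yeadon's inequality to $x^{q'}\in L_1(\M)$;
\item pass through the power $1/q'$ via Hansen's inequality $e\,z^{1/q'}e\le(eze)^{1/q'}$, a step your sketch leaves out.
\end{enumerate}
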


Analogous to Theorem \ref{Thm:ConvergenceInLpNormHongType}, we now state the corresponding almost uniform convergence result, which will play a crucial role in proving the pointwise ergodic theorem in noncommutative Orlicz spaces. Notably, this result does not require the weights to be central.

\begin{thm}\label{Thm:ConvergentForDenseSet}
Let $\alpha>0$ and let $T$ be a positive $DS$ operator on $\M$. Let $\mathbf{b}=\{b_k\}_{k\geq 0}$ and $\mathbf{d}=\{d_k\}_{k\geq 0}$ be elements of 
$\mathcal{U}_f(1,\alpha)$ such that atleast one of them is bounded. Then the following statements hold:
\begin{enumerate}[label=(\roman*)]
    \item For every $x \in L_1(\M)\cap\M$, the sequence $\{M_n(\mathbf{b}, \mathbf{d},\mathbf{1},T, \alpha)x\}_{n\geq 0}$ converges almost uniformly.
    \item If $\alpha\geq 1$ and $\mathbf{k} = \{k_j\}_{j\geq 0}$ is an increasing sequence of natural numbers with density $1$, then the sequence $\{M_n^{\mathbf{k}}(\mathbf{b}, \mathbf{d},\mathbf{1},T, \alpha)x\}_{n\geq 0}$ converges almost uniformly for all $x \in L_1(\M)\cap\M$.
\end{enumerate}
\end{thm}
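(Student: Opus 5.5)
The plan is to reduce, by an approximation argument, to weights that are trigonometric polynomials, and for those to further reduce to single-frequency weights $b_k=\lambda^k a$, $d_k=\mu^k c$ with $a,c\in\M$ and $|\lambda|=|\mu|=1$. For such weights we have
\[
T^k(b_kxd_k)=(\lambda\mu)^kT^k(axc),
\]
so that
\[
M_n(\mathbf{b},\mathbf{d},\mathbf{1},T,\alpha)x=M_n(\mathbf{1},\mathbf{1},\mathbf{w},T,\alpha)(axc),
\qquad w_k=(\lambda\mu)^k .
\]
If $x\in L_1(\M)\cap\M$, then also $axc\in L_1(\M)\cap\M$. Products of two trigonometric polynomials expand into finite sums of such terms, and almost uniform convergence is preserved under finite sums. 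Hence it suffices to show that the scalar-modulated averages $M_n(\mathbf{1},\mathbf{1},\{\theta^k\},T,\alpha)y$ converge almost uniformly for $y\in L_1\cap\M$ and $|\theta|=1$.

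For this scalar case I would invoke the Hong--Jing--Zhang results \cite{HJZ26}, which cover modulated $(C,\alpha)$-averages with Besicovitch scalar weights. The weight $\{\theta^k\}$ is bounded and lies in every $W_q^\alpha$ with $q=\infty$, so almost uniform convergence for $y\in L_p$ with $p$ large holds. For $y\in L_1\cap\M\subset L_p$ we may take any large $p$. If one prefers a self-contained route, there is an alternative: use the spectral decomposition of $L_2$ for the contraction $\theta T$ into fixed vectors and the closure of $(I-\theta T)L_2$, then transfer to a.u. convergence via the maximal inequality in Theorem \ref{Thm:MaximalInequalityForVectorWeightedSequenceSpace}(i) with $q=\infty$ (the unimodular scalar weight is central) and the noncommutative Banach principle.

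Next comes the approximation step, which I expect to be the main obstacle, since general $\mathbf{b},\mathbf{d}\in\mathcal U_f(1,\alpha)$ need not be central and the maximal inequality is only available for central weights. I would avoid maximal inequalities here and use a direct pointwise estimate, exploiting $x\in\M$. Say $\mathbf{d}$ is bounded by $D$, and choose a trigonometric polynomial $\mathbf{b}'$ with $\norm{\mathbf{b}-\mathbf{b}'}_{W_1^\alpha}<\delta$. Then
\[
\norm{M_n(\mathbf{b},\mathbf{d})x-M_n(\mathbf{b}',\mathbf{d})x}_\infty\le \norm{x}_\infty D\,\frac{1}{A_n^\alpha}\sum_{k=0}^nA_{n-k}^{\alpha-1}\norm{b_k-b_k'}_\infty ,
\]
using $\norm{T^k}_{\infty\to\infty}\le1$. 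The limsup of the right-hand side is at most $\norm{x}_\infty D\delta$ (note $A^{\alpha-1}_{n-k}>0$ for $\alpha>0$). Similarly, approximate $\mathbf{d}$ by $\mathbf{d}'$, keeping $\mathbf{b}'$ bounded. So for large $n$ the averages are uniformly $O(\delta)$-close in operator norm to those with polynomial weights, which converge almost uniformly. A standard diagonal argument then gives that $\{M_nx\}$ is almost uniformly Cauchy. Concretely, given $\varepsilon$, choose $\delta_m$ and projections $e_m$ with $\tau(e_m^\perp)<\varepsilon2^{-m}$ on which the polynomial averages are uniformly Cauchy, and set $e=\bigwedge e_m$. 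Completeness of almost uniform limits (Egorov-type) in $L_0$ then yields convergence. Care is needed because the finitely many small $n$ are excluded only in the limsup; this is harmless for the Cauchy property.

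For (ii), the argument is identical once the single-frequency subsequential case is settled. That case amounts to showing that
\[
\frac{1}{A_n^\alpha}\sum_{j=0}^nA_{n-j}^{\alpha-1}\theta^{k_j}T^{k_j}y
\]
converges almost uniformly when $\mathbf{k}$ has density one. Since $\alpha\ge1$, the coefficients $A^{\alpha-1}_{n-j}/A_n^\alpha$ are at most $A^{\alpha-1}_n/A^\alpha_n\lesssim 1/n$ (monotone increasing in the index when $\alpha\ge1$). I would compare with the full average by reindexing. Writing the subsequence as the full sequence with a density-zero set of terms removed, the difference is bounded in norm by $C\norm{y}_\infty\,\#\{m\le k_n: m\notin\mathbf{k}\}/n$ plus a term comparing the $(C,\alpha)$ kernels at $j$ versus $k_j$. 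Both tend to $0$ uniformly, because $k_n/n\to1$ and the kernel $A^{\alpha-1}_{n-j}/A_n^\alpha$ is a Riemann-sum of $\alpha(1-t)^{\alpha-1}$, which is uniformly continuous for $\alpha\ge1$. This is the other delicate point, and it is where $\alpha\ge1$ is used. Then the operator-valued approximation above applies verbatim, now using the sequence $\{k_j\}$ in the $W_1^\alpha$ estimate, again controlled via density one.
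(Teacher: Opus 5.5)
Your proposal is correct and follows essentially the paper's route. Trigonometric polynomials over $\mathcal{U}_f$ are split via finite spectral decompositions into scalar-modulated averages handled by \cite{HJZ26}, and general weights are approximated in the $W_1^\alpha$ seminorm by an operator-norm estimate fed into Lemma~\ref{Lem:DifferenceSmallAULemma}. For (ii) you compare with the full average at index $k_n$; the paper instead normalizes by $A_{k_n}^\alpha$ and uses the nonnegativity of $A_{k_n-k_j}^{\alpha-1}-A_{n-j}^{\alpha-1}$, the identity $\sum_{j=0}^{n}A_{n-j}^{\alpha-1}=A_n^\alpha$ and $A_{k_n}^\alpha/A_n^\alpha\to 1$, where you use a Riemann-sum/uniform-continuity argument. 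That difference is minor.

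Only your optional self-contained route needs a fix. The column maximal inequality of Theorem~\ref{Thm:MaximalInequalityForVectorWeightedSequenceSpace}(i) with $q=\infty$ requires $p>\max\{2/\alpha,2\}$, so the mean ergodic splitting must be done in $L_p(\M)$ for large $p$, not in $L_2(\M)$.
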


The maximal inequality has two important consequences. First, it implies that the corresponding family of ergodic averages is bilaterally uniformly equicontinuous in measure at zero on $L_p(\M)$. Second, when the Orlicz function $\Phi$ satisfies the $p$-convexity condition, with the help of maximal inequality and a suitable spectral decomposition, we obtain bilateral uniform equicontinuity in measure at zero on $L_\Phi(\M)$ for the averages considered in Eqs. \eqref{Eq:ModulatedWeightedAverage} and \eqref{Eq:ModulatedWeightedAverage2}.
The noncommutative Banach principle of Litvinov \cite{Li12} and Theorem \ref{Thm:ConvergentForDenseSet} allow us to prove the following pointwise ergodic theorem in noncommutative Orlicz spaces.

\begin{thm}\label{Thm:AUconvergenceInOrliczSpace}
For $\alpha \geq 1$ and $1\leq q\leq\infty$. Let $q^\prime$ denote the conjugate index of $q$, that is $\frac{1}{q}+\frac{1}{q^\prime}=1$. Let $T$ be a positive $DS$ operator on $\M$. Suppose $\mathbf{b}=\{b_k\}_{k\geq 0}$ and $\mathbf{d}=\{d_k\}_{k\geq 0}$ are sequences in $\mathcal{U}_f(1,\alpha)\cap W_{q}^\alpha(\mathcal{Z}(\M))$  such that atleast one of them is bounded and $\mathbf{k} = \{k_j\}_{j\geq 0}$ is an increasing sequence of natural numbers with density $1$. If $\Phi$ is a $p$-convex Orlicz function satisfying $\Delta_2$-condition with $\max \{\frac{q^\prime}{\alpha}, q^\prime\}<p<\infty$, then for every $x\in L_\Phi(\M)$, the sequence
$\{M_n^\mathbf{k}(\mathbf{b}, \mathbf{d}, \mathbf{1}, T, \alpha) x\}_{n\geq 0}$ converges bilateral almost uniformly in $L_\Phi(\M)$.
\end{thm}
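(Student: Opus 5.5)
The plan is the standard Banach-principle scheme: establish bilateral uniform equicontinuity in measure (b.u.e.m.) at zero for the family $\{M_n^{\mathbf{k}}(\mathbf{b},\mathbf{d},\mathbf{1},T,\alpha)\}_{n\geq 0}$ on $L_\Phi(\M)$, show convergence on a dense subset, and conclude with Litvinov's noncommutative Banach principle \cite{Li12}. For the dense subset we take $L_1(\M)\cap\M$. It is dense in $L_\Phi(\M)$ because $\Phi$ satisfies the $\Delta_2$-condition, so $L_\Phi(\M)$ has order continuous norm. On this set, Theorem \ref{Thm:ConvergentForDenseSet}(ii) (which needs $\alpha\geq 1$ and density one) already gives almost uniform, hence bilateral almost uniform, convergence. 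Since the set of elements for which b.a.u. convergence holds is closed once b.u.e.m. at zero is known, the whole argument reduces to proving b.u.e.m. at zero on $L_\Phi(\M)$.

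\emph{Step 1: a maximal inequality in $L_p$ for the subsequential averages.} Fix $p$ with $\max\{q'/\alpha,q'\}<p<\infty$. We want Theorem \ref{Thm:MaximalInequalityForVectorWeightedSequenceSpace}(i) for $M_n^{\mathbf{k}}$ as well. We would use positivity and centrality of the weights: for $x\geq 0$, decompose $b_k,d_k$ into real/imaginary and positive/negative parts (finitely many central pieces). The summand $A_{n-j}^{\alpha-1}T^{k_j}(b_{k_j}xd_{k_j})$ is then dominated by terms of the full average $M_{k_n}$ with the index set restricted to $\{k_j\}$. Equivalently, we define the extended weights $\tilde b_m=b_{k_j}$ if $m=k_j$ and $0$ otherwise, and check that the $(C,\alpha)$-coefficients along the subsequence are comparable to those of the full sequence; this is where density one and $\alpha\geq1$ enter, since $A^{\alpha-1}_{n-j}$ is then nondecreasing in $n-j$. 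This yields
\[
\Bigl\|\sup_n{}^{+} M_n^{\mathbf{k}} x\Bigr\|_p\le C\|x\|_p,
\]
and consequently a weak-type estimate: for every $\varepsilon>0$ there is a projection $e$ with $\tau(e^\perp)\le C^p\|x\|_p^p/\varepsilon^p$ and $\sup_n\|e M_n^{\mathbf{k}}x\,e\|_\infty\le\varepsilon$.

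\emph{Step 2: transfer to $L_\Phi$ via $p$-convexity.} This is the step I expect to be the main obstacle. Given $x\in L_\Phi(\M)$, which we may take positive after splitting into four parts, we decompose spectrally: $x=x\chi_{[0,1]}(x)+x\chi_{(1,\infty)}(x)=x_1+x_2$. The piece $x_1$ lies in $L_p\cap\M$. Because $\Phi(t^{1/p})$ is convex with value $0$ at $0$, we have $\Phi(t)\ge \Phi(1)t^p$ for $t\le 1$, so $\|x_1\|_p^p\lesssim\tau(\Phi(x))$. The piece $x_2$ has support of trace at most $\tau(\Phi(x))/\Phi(1)$ and lies in $L_1$, or it can be handled by the same $p$-convexity bound applied at scale $\delta$. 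Given $\varepsilon,\delta>0$, choose $\gamma$ so that $\tau(\Phi(x))<\gamma$. The projection from Step 1 applied to $x_1$ controls the first part. For $x_2$ we use the trivial $L_\infty$ bound on $e M_n^{\mathbf{k}} x_2 e$ after cutting by the support projection of $x_2$; since $T$ is not support-preserving, this may instead require applying the $L_p$ maximal inequality again, now to $x_2\wedge$ a truncation. We then combine the two projections via $\tau((e_1\wedge e_2)^\perp)\le\tau(e_1^\perp)+\tau(e_2^\perp)$.

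The delicate points in this step are twofold. First, the bound on $x_2$ needs $\Delta_2$ (or convexity) to control $\tau(\Phi(x_2/\lambda))$ by $\tau(\Phi(x))$. Second, b.u.e.m. is formulated with respect to the $L_\Phi$-norm, so we must translate smallness of $\|x\|_\Phi$ into smallness of $\tau(\Phi(x))$, which again uses $\Delta_2$.

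Once b.u.e.m. at zero on $L_\Phi(\M)$ is established, the Banach principle together with Step 0 gives b.a.u. convergence of $M_n^{\mathbf{k}}(\mathbf{b},\mathbf{d},\mathbf{1},T,\alpha)x$ for every $x\in L_\Phi(\M)$.
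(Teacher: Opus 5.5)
\textbf{Where the proposal agrees with the paper.} Your overall scheme matches the paper: density of $L_1(\M)\cap\M$ via $\Delta_2$, convergence there from Theorem~\ref{Thm:ConvergentForDenseSet}(ii), b.u.e.m.\ at zero, then Litvinov's Banach principle. Your Step~1 also matches: for positive $x$ and positive central weights, you dominate $M_n^{\mathbf{k}}$ by $\frac{A_{k_n}^\alpha}{A_n^\alpha}M_{k_n}$ with the zero-extended weights $\chi_{\mathbf{k}}(j)b_j$, using $A_{n-j}^{\alpha-1}\le A_{k_n-k_j}^{\alpha-1}$ for $\alpha\ge 1$. (Only positive lower density is needed here, to bound $A_{k_n}^\alpha/A_n^\alpha$.)

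\textbf{The gap is in Step~2.} This is exactly where $p$-convexity must be used, and the inequality you invoke goes the wrong way. Convexity of $t\mapsto\Phi(t^{1/p})$ with value $0$ at $0$ makes $\Phi(s)/s^p$ \emph{nondecreasing} (Lemma~\ref{Lem:Ratio}). Hence $\Phi(s)\le\Phi(1)s^p$ for $s\le 1$, and $\Phi(s)\ge\Phi(1)s^p$ only for $s\ge 1$.

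So your low part $x\chi_{[0,1]}(x)$ has no $L_p$ control at all. Take $\Phi(t)=t^{2p}$, which is $p$-convex and gives $L_\Phi=L_{2p}$, on $\M=L_\infty(\mathbb{R})$. The function $f(s)=(1+|s|)^{-1/p}$ satisfies $f\le 1$ and $f\in L_{2p}$, but $f\notin L_p$. Meanwhile your high part $x\chi_{(1,\infty)}(x)$, the one that actually has $L_p$ control, is left without a working argument. As you note yourself, the support-projection idea fails because $T$ does not preserve supports. Consequently b.u.e.m.\ at zero is not established, and the Banach principle cannot be applied.

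\textbf{How to repair it.} Reverse the roles and cut at a small level $\delta$, as the paper does in Proposition~\ref{Prop:ToMakeSmallElementInOrliczSpace} and the proof of Theorem~\ref{Thm:buemAt0ForM_nbTx}. Write $x= x\chi_{[0,\delta)}(x)+x\chi_{[\delta,\infty)}(x)$.

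The first piece has operator norm at most $\delta$. It is handled by the uniform bound $\sup_n\norm{M_n^{\mathbf{k}}y}_\infty\lesssim\norm{y}_\infty$; for the possibly unbounded $W_q^\alpha$ weight, use H\"older as in Eq.~\eqref{Eq:MnxdeltaEstimate}, or the supremum version of the weighted average, together with the domination.

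For the second piece, monotonicity of $\Phi^{1/p}(u)/u$ gives $\lambda\le t\,\Phi^{1/p}(\lambda)$ for $\lambda\ge\delta$, with $t=\delta/\Phi^{1/p}(\delta)$. Hence $x\chi_{[\delta,\infty)}(x)\le t\,\Phi^{1/p}(x)$. Moreover $\norm{\Phi^{1/p}(x)}_p^p=\tau(\Phi(x))\le\norm{x}_\Phi$ whenever $\norm{x}_\Phi\le 1$. Applying your Step~1 weak-type estimate to $\Phi^{1/p}(x)$, with $\norm{x}_\Phi<\gamma$ chosen depending on $\delta$ and $\varepsilon$, gives b.u.e.m.\ at zero.

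\textbf{Two smaller points.} First, this argument uses only convexity of $\Phi$ to pass from $\norm{x}_\Phi$ to $\tau(\Phi(x))$; $\Delta_2$ is needed solely for the density of $L_1(\M)\cap\M$. Second, you should also check that the b.a.u.\ limit lies in $L_\Phi(\M)$. The paper gets this from $\sup_n\norm{M_n^{\mathbf{k}}x}_\Phi\lesssim_\alpha\norm{x}_\Phi$ together with closedness of the $L_\Phi$-unit ball in the measure topology.
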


Finally, as an application of our $p$-convexity result, we extend the class of noncommutative Orlicz spaces for which the noncommutative Rota's theorem holds.

\begin{thm}[Noncommutative Rota's theorem]\label{Thm:RotaTheoremInourSetting}
Let $\M$ be a finite von Neumann algebra equipped with a faithful normal tracial state $\tau$. Let
$T:(\M,\tau)\longrightarrow(\M,\tau)$ be a factorizable $\tau$-preserving Markov operator. Let $\Phi:[0,\infty)\rightarrow[0,\infty)$ be an Orlicz function such that the function
\begin{align*}
t\longmapsto \Phi(t^{1/p}),\qquad t\geq 0,    
\end{align*}
is convex for some $p>1$. Then, for every $x\in L_\Phi(\M)$, the sequence
$\big\{T^n(T^*)^n(x)\big\}_{n\geq 0}$ is $b.a.u.$ convergent. Furthermore, if the function
$\widetilde{\Phi}(t)=\Phi(\sqrt{t})$, $t\geq 0$, 
such that
\begin{align*}
 t\mapsto \widetilde{\Phi}(t^{1/p})   
\end{align*}
is convex, then, for every $x\in L_\Phi(\M,\tau)$, the sequence $\big\{T^n(T^*)^n(x)\big\}_{n\geq 0}$ converges almost uniformly. 
\end{thm}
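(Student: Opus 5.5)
The plan is to run the usual noncommutative Banach principle argument: a maximal inequality on $L_p(\M)$, then a transfer to bilateral uniform equicontinuity in measure at zero on $L_\Phi(\M)$ via $p$-convexity, then convergence on a dense subset. The maximal inequality is taken from the known noncommutative Rota theorem on $L_p(\M)$ for $1<p\le\infty$ (Junge--Xu, Anantharaman-Delaroche). For a factorizable $\tau$-preserving Markov operator $T$, the operator $T^n(T^*)^n$ factors as a reversed martingale $\mathbb{E}_n(\cdot)$ sandwiched between a trace-preserving embedding $\pi_0$ and its conditional expectation: $T^n(T^*)^n = \mathbb{E}\,\pi_0^{-1}\circ\mathbb{E}_n\circ\pi_0$, with $\mathbb{E}_n$ decreasing conditional expectations in a larger finite algebra. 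Hence
$$\big\|(T^n(T^*)^n x)_{n}\big\|_{L_p(\M;\ell_\infty)}\le C_p\|x\|_p ,\qquad 1<p\le\infty.$$
Moreover, the column version $\|(\cdot)\|_{L_p(\M;\ell_\infty^c)}\lesssim\|x\|_p$ holds for $2<p\le\infty$, by the standard trick of applying the row/column estimate to $|x|^2$ and using the Kadison--Schwarz inequality $|T_nx|^2\le T_n(|x|^2)$, with $T_n=T^n(T^*)^n$ unital completely positive.

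Next comes the transfer, which I expect to be the main step. Fix $x\in L_\Phi(\M)$ and $\varepsilon,\delta>0$; we need $\gamma>0$ such that $\|x\|_\Phi<\gamma$ yields a projection $e$ with $\tau(e^\perp)\le\varepsilon$ and $\sup_n\|e\,T_n x\,e\|_\infty\le\delta$. Decompose $x$ into real/imaginary and positive/negative parts, so that $x\ge 0$. Using the spectral decomposition, write $x=x\chi_{[0,1]}(x)+x\chi_{(1,\infty)}(x)=x_1+x_2$. Since $\M$ is finite, $x_1\in\M\subseteq L_p$. For $x_2$, the convexity of $t\mapsto\Phi(t^{1/p})$ gives $\Phi(t)\ge \Phi(1)t^p$ for $t\ge1$, because a convex function vanishing at $0$ satisfies $\Psi(s)\ge s\Psi(1)$ for $s\ge1$. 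Hence $\tau(x_2^p)\le\Phi(1)^{-1}\tau(\Phi(x))$. For $x_1$, on $[0,1]$ the same convexity gives $\Phi(t)\le\Phi(1)t^p$, which is the wrong direction. Instead I will truncate at a small level $\eta$: split into $x\chi_{[0,\eta]}(x)$, whose averages are bounded by $\eta$ in $\M$ since each $T_n$ is a contraction on $\M$, and $x\chi_{(\eta,\infty)}(x)$. On $(\eta,\infty)$ convexity gives $\Phi(t)\ge\Phi(\eta)(t/\eta)^p$, so
$$\|x\chi_{(\eta,\infty)}(x)\|_p^p\le \eta^p\Phi(\eta)^{-1}\tau(\Phi(x)).$$
Small $\|x\|_\Phi$ forces $\tau(\Phi(x))$ small, which relies on the Luxemburg norm and continuity of $\Phi$ at $0$, possibly after rescaling. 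Choosing $\eta=\delta/2$ and then $\gamma$ small, the $L_p$ maximal inequality together with Chebyshev yields $e$ with $\tau(e^\perp)\le 16\,\|\cdot\|_p^p/\delta^p<\varepsilon$ and $\sup_n\|eT_n(x\chi_{(\eta,\infty)}(x))e\|\le\delta/2$. This establishes b.u.e.m. at zero on $L_\Phi(\M)$.

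For the dense set, $T_n x$ converges b.a.u. (even a.u.) for $x\in\M$ by the $L_\infty$/$L_2$ Rota theorem; alternatively, use $L_2$ martingale convergence together with the maximal inequality. Since $\M$ is dense in $L_\Phi(\M)$, Litvinov's Banach principle \cite{Li12} then gives b.a.u. convergence for all $x\in L_\Phi(\M)$. One caveat: density of $\M$ in norm requires $\Delta_2$, which the statement does not assume. I would therefore argue instead with the truncation $x\chi_{[0,N]}(x)$, whose difference from $x$ has small $L_p$-norm by the $t\ge1$ estimate above. This gives closedness of the convergence set directly in measure, without appealing to norm density.

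For the almost uniform part, apply the same argument with the column maximal inequality for $2<p$. The hypothesis that $\Phi(\sqrt t)$ is $p$-convex means $t\mapsto\Phi(t^{1/(2p)})$ is convex, i.e. $\Phi$ is $2p$-convex with $2p>2$. Repeating the splitting with exponent $2p$ yields one-sided estimates $\sup_n\|T_n(y)e\|_\infty\le\delta$, hence uniform equicontinuity in measure at zero. The a.u. version of the Banach principle then gives almost uniform convergence.
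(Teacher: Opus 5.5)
Your proposal is correct and follows essentially the paper's route. The paper inserts Proposition~\ref{Prop:ToMakeSmallElementInOrliczSpace} into the argument of \cite{Bi24}, and your steps match it: the $L_p$ (resp.\ column $L_{2p}$) maximal inequality for $T^n(T^*)^n$, a $p$-convex spectral splitting giving b.u.e.m.\ (resp.\ u.e.m.) at zero on $L_\Phi(\M)$, and Litvinov's Banach principle; your estimate $\|x\chi_{(\eta,\infty)}(x)\|_p^p\le \eta^p\Phi(\eta)^{-1}\tau(\Phi(x))$ is exactly that proposition with $t=\eta/\Phi^{1/p}(\eta)$.

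Your $\Delta_2$ caveat is well taken: the statement does not assume $\Delta_2$, and e.g.\ $\Phi(t)=e^{t^2}-1$ is $2$-convex but not $\Delta_2$. Your truncation fix works, and in fact it shows more. Since $s^p\le 1+\Phi(1)^{-1}\Phi(s)$ and $\tau(1)=1$, you get $L_\Phi(\M)\hookrightarrow L_p(\M)$, resp.\ $L_{2p}(\M)$ under the hypothesis on $\widetilde{\Phi}$. So the theorem follows directly from the $L_p$ Rota theorem, and the Orlicz equicontinuity step is not even needed.
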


\subsection{Methodology and Novelty of Our Work}
We now highlight the main ideas behind our approach.
\begin{itemize}
    \item One of the main novelties of this work is the replacement of scalar-valued weights with operator-valued weights in Eqs. \eqref{Eq:ModulatedWeightedAverage} and \eqref{Eq:ModulatedWeightedAverage2}. More precisely, motivated by the modulated $(C,\alpha)$-ergodic averages studied by Hong, Jing, and Zhang \cite{HJZ26}, we consider weights belonging to the operator-valued class $W_q^\alpha(\M)$, rather than the scalar-valued class $W_q^\alpha$. This leads to a substantially more general framework in which the elements involved in the weighted averages are allowed to belong to $\M$. In particular, our setting contains several previously studied weighted ergodic averages as special cases. 
    
    For e.g.,
    when $\alpha=1$ and $\mathbf{w}$ is the constant scalar sequence
    \begin{align*}
        \mathbf{w}=\{1\}_{k\geq 0}\subseteq \mathbb{C},    
    \end{align*}
    the averages in Eqs. \eqref{Eq:ModulatedWeightedAverage} and \eqref{Eq:ModulatedWeightedAverage2} reduce to those in Eqs. \eqref{Eq:BikramSahaAverages1}  and \eqref{Eq:BikramSahaAverages2} respectively, considered by Bikram and Saha \cite{BS24}.

    On the other hand, if
    \begin{align*}
        \mathbf{d}=\mathbf{1}=\{1\}_{k\geq 0}\subseteq\M\qquad\text{ and } \qquad \mathbf{b}=\{w_k \,1\}_{k\geq 0}\subseteq \M,   
    \end{align*}
    where $\mathbf{w}=\{w_k\}_{k\geq 0}\in W_q^\alpha(\mathbb{C})$ is a scalar sequence, then the average in Eq. \eqref{Eq:ModulatedWeightedAverage} 
    reduces precisely to the modulated $(C,\alpha)$-weighted average in Eq. \eqref{Eq:ModulatedStandardAverages} considered by Hong, Jing and Zhang \cite{HJZ26}. Thus, our framework simultaneously includes the weighted averages studied in \cite{BS24} and \cite{HJZ26} as special cases.

    \item Next, we relax the boundedness assumption imposed on the weights in Eqs. \eqref{Eq:BikramSahaAverages1}  and \eqref{Eq:BikramSahaAverages2}, respectively. 
    In \cite{BS24}, the underlying $\mathcal{Z}(\M)$-valued weight sequences are required to be bounded, whereas our framework allows one of the weight sequences to belong to the more general class $W_q^\alpha(\mathcal{Z}(\M))$.

    \item Moreover, a direct replacement of $T^k (x)$ by $T^k(b_k x d_k)$ in the proofs of \cite{HJZ26} may not be possible in general, since the family of maps $x \mapsto T^k(b_k x d_k)$ does not arise from iterates of a single $DS$ operator. To overcome this difficulty, we follow techniques similar to those developed in \cite{CL13, BS24}.  

    \item We establish the mean ergodic theorem, maximal inequalities, and bilateral almost uniform (resp. almost uniform) convergence for the new weighted ergodic averages on noncommutative Orlicz spaces. The maximal inequality plays a crucial role in establishing the corresponding pointwise ergodic theorem. In this direction, we extend the maximal inequality of \cite[Theorem 1.2]{HJZ26} to the operator-valued setting in Theorem \ref{Thm:MaximalInequalityForVectorWeightedSequenceSpace}. A key step in its proof is to reduce estimates for $\M$-valued weighted averages to suitable estimates for scalar-weighted averages. This reduction allows us to apply the existing scalar-valued maximal inequalities.

    \item One of the main technical ingredients in our approach is the introduction of the $p$-convexity condition on the underlying Orlicz function into the study of pointwise ergodic theorems on noncommutative Orlicz spaces. The main tool in \cite{CL13} and \cite{BS24} is a weak-type $(1,1)$ inequality, whereas the results in our Theorem \ref{Thm:MaximalInequalityForVectorWeightedSequenceSpace} allow us with a weak-type $(p,p)$ inequality, where $p>1$. Therefore, in order to extend the techniques of \cite{CL13} and \cite{BS24} to our setting, the $p$-convexity of the underlying Orlicz function plays a crucial role. The $p$-convexity of the underlying Orlicz function yields a useful spectral decomposition: every positive element of a noncommutative Orlicz space with norm bounded by one can be dominated by the sum of an element with arbitrarily small operator norm and an element of $L_p(\M)$, whose $L_p$-norm is controlled by the Orlicz norm of the original element. With the help of maximal inequality and the above decomposition, we obtain bilateral uniform equicontinuity in measure at zero on $L_\Phi(\M)$. By applying the noncommutative Banach principle, we can extend convergence from a suitable dense subspace to pointwise convergence on the noncommutative Orlicz space.

    \item Another novelty lies in the treatment of subsequential convergence for $\alpha\geq 1$. In this case, the presence of the $(C,\alpha)$-coefficients $A_n^\alpha$ introduces an additional difficulty that is absent from the usual Ces\`aro averages. Our argument uses the asymptotic behavior of these coefficients to pass from the full sequence to subsequences of density one. This enables us to establish the corresponding convergence results along subsequences of density one for $\alpha\geq 1$.

    \item In a different direction, we extend the underlying classes of noncommutative Orlicz spaces for which the noncommutative Rota's theorem $($see \cite{Bi24} and \cite{An06}$)$ holds. Here again, $p$-convexity plays a crucial role in obtaining the required estimates.
\end{itemize}
Thus, our results provide both an operator-valued extension of modulated weighted ergodic theory and a broader Orlicz-space framework for noncommutative pointwise ergodic theorems.

\subsection{Outline}
The rest of the paper is organized as follows. In Section \ref{Sec:Preliminaries}, we discuss the necessary preliminaries that are required for this paper. In Section \ref{Sec:pConvexOrliczSpace}, we introduce the notions of $p$-convex Orlicz functions and establish a useful spectral decomposition (Proposition \ref{Prop:ToMakeSmallElementInOrliczSpace}) that will be used in Section \ref{Sec:MainResults} to deal with pointwise convergence in noncommutative Orlicz spaces. In Theorem \ref{Thm:RotaTheoremInourSetting}, we extend the class of noncommutative Orlicz spaces for which Rota's theorem holds.
In Section \ref{Sec:TechLemma}, we introduce a class of $\M$-valued weighted sequences, which allows us to consider weighted averages that differ from those studied by Hong et al. \cite{HJZ26}. We prove the completeness of the aforesaid space in Theorem \ref{Thm:WqAlphaComplete}. We also introduce weighted Besicovitch sequences in this section.

Sections \ref{Sec:MaximalInequalityWeightedErgodicAverages} and \ref{Sec:MainResults} contain the main results of the paper. In Section \ref{Sec:MaximalInequalityWeightedErgodicAverages}, using the results of \cite{HJZ26}, we first establish a mean ergodic theorem (Theorem \ref{Thm:ConvergenceInLpNormHongType}). We then prove a maximal inequality (Theorem \ref{Thm:MaximalInequalityForVectorWeightedSequenceSpace}) for the corresponding weighted ergodic averages. In Section \ref{Sec:MainResults}, we establish pointwise convergence for a dense subset of noncommutative Orlicz space $($Theorem \ref{Thm:ConvergentForDenseSet}$)$. Finally, in Theorem \ref{Thm:AUconvergenceInOrliczSpace}, we further obtain pointwise convergence when the weights are taken from the center of the von Neumann algebra.

\section{Preliminaries}\label{Sec:Preliminaries}
In this section, we collect some preliminaries on noncommutative $L_p$-spaces, noncommutative Orlicz spaces, and $DS$ operators that are required for this paper. Throughout this paper, $\M\subseteq \mathbf{B}(\mathcal{H})$ denotes a semifinite von Neumann algebra with separable predual and $\tau$ denotes a faithful normal semifinite $(f.n.s.$ in short$)$ trace on $\M$. Let $\M_+$ and $\M_{s.a.}$ respectively denote the positive cone and self-adjoint part of the von Neumann algebra $\M$. The operator norm on $\M$ is denoted by $\norm{\cdot}_\infty$. We also denote by $\mathcal{P}(\M)$ the collection of all projections in $\M$. For detailed theory on von Neumann algebras, we refer the reader to \cite{SZ19, Ta02, Ta03}. Throughout this paper, we write $A\lesssim B$ if there exists an absolute constant $c$, independent of parameters, such that $A\leq cB$. Similarly, we write $A\lesssim_{\alpha} B$ if there exists a positive constant $c_{\alpha}$, depending only on $\alpha$, such that
\begin{align*}
A\leq c_{\alpha}B.
\end{align*}

\subsection{Noncommutative $L_p$-spaces}\label{SubSec:NCOrlicz}
In this section, we recall some basic facts
on noncommutative $L_p$-spaces associated 
with the pair $(\M,\tau)$. More details on
noncommutative $L_p$-spaces can be found 
in \cite{Hi21, FK86, PX03}.

A closed, densely defined operator $a$ affiliated with $\M$ $($denoted by $a\eta \M)$ is said to be $\tau$-measurable if, for any $\delta>0$, there exists a projection $e\in\mathcal{P}(\M)$
such that $\tau(e^\perp)\leq \delta$ and $\norm{ae}_\infty<\infty$. Let $L_0(\M)$ denote the collection of all $\tau$-measurable operators associated with the pair $(\M,\tau)$. 
Then $\tau$ extends naturally from $\mathcal{M}_+$, the positive cone of $\mathcal{M}$, to $L_0(\mathcal{M})_+$, the positive cone of $L_0(\mathcal{M})$, by setting
\begin{align*}
\tau(x)=\int_{0}^{\infty} \lambda\,\, d\tau(e_\lambda), \quad \quad x\in L_0(\M)_+,
\end{align*}
where $x=\int_{0}^{\infty} \lambda \,\,d e_\lambda$ denotes the spectral decomposition of $x$. Let $x,y \in L_0(\M)$. Then the strong sum and strong product of $x$ and $y$ are respectively denoted by $\overline{x+y}$ and $\overline{xy}$. It is well known that $L_0(\M)$ is a $\ast$-algebra with respect to strong sum, strong product, and adjoint. Throughout this article, we use the convention that $x+y$ and $xy$ will denote the strong sum and strong product, respectively.

For an operator $x\in L_0(\M)$, 
let $\abs{x}$ be the absolute value of the operator $x$.
For $0< p<\infty$, the noncommutative $L_p$-space $L_p(\M)$ associated with the pair $(\M,\tau)$ is defined by
\begin{align*}
L_p(\M)=\left\{x\in L_0(\M): \norm{x}_p:=\tau(\abs{x}^p)^{1/p}<\infty\right\}.
\end{align*}
For $p=\infty$, we set $L_\infty(\M):=\M$ equipped with the operator norm $\norm{\cdot}_\infty$ on $\M$. For $0<p<1$, the noncommutative $L_p$-space $L_p(\M)$ equipped with the quasi-norm $\norm{\cdot}_p$ becomes a quasi-Banach space, and becomes a Banach space for any $1\leq p\leq\infty$. Moreover, $\M\cap L_1(\M)$ is dense in $ L_p(\M)$ with respect to  $\norm{\cdot}_{p}$ for $1\leq p<\infty$.

\noindent For  $x\in L_0(\M)$ and $t>0$,  the $(t$-th$)$ generalized $s$-number $\mu_t(x)$ is defined by 
\begin{align*}
\mu_t(x)=\inf\left\{s>0:\tau(e_{(s,\infty)}(\abs{x}))\leq t\right\},
\end{align*}
where $e_{(s,\infty)}(\abs{x})$ is the spectral projection of $\abs{x}$ corresponding to the interval $(s,\infty)$. It is known $($see, e.g., \cite[Proposition 4.20]{Hi21}$)$ that, for $x\in L_0(\M)_+$ and any continuous non-decreasing function $f$ on $[0,\infty)$ satisfying $f(0)\geq 0$, one has
\begin{align*}
\tau(f(x))=\int_{0}^{\infty}\,f\left(\mu_t(x)\right) \, dt.
\end{align*}
The following remark will be useful later.
\begin{rem}\label{Rem:Commutivityofoperators}
Let $x \in L_0(\M)$ be a positive self-adjoint operator and let $b,d \in Z(\M)_+$. 
Then, where products are understood in the sense of strong products, $bxd$ is a positive self-adjoint operator. Moreover, 
\begin{align*}
 bxd = dbx=(db)^{1/2}x(db)^{1/2}\qquad\text{ and }\qquad bxd\leq\norm{b}_\infty\norm{d}_\infty x. 
\end{align*}
Indeed, the equality $bxd=dbx$ follows from the fact that $b,d$ are central, while the representation $(db)^{1/2}x(db)^{1/2}$ makes the positivity clear $($see, e.g., \cite[E9.21, pp. 268]{SZ19} and \cite[Proposition 2.2.12]{DPS23}$)$.
\end{rem}

\subsection{Noncommutative Orlicz spaces}
\noindent Let $\Phi: [0,\infty)\longrightarrow [0,\infty)$ be an Orlicz function $($also called an Young function$)$. Then
the noncommutative Orlicz space $L_\Phi(\M)$ is defined as the collection of all
$x\in L_0(\M)$ for which there exists $\alpha>0$ such that 
\begin{align*}
\tau(\Phi(\alpha\abs{x}))<\infty.    
\end{align*}
Then $L_\Phi(\M)$ equipped with the Luxemberg-Nakano norm:
\begin{align*}
\norm{x}_\Phi=\inf\left\{\alpha>0: \tau\left(\Phi\left(\frac{\abs{x}}{\alpha}\right)\right)\leq 1\right\}, \quad \quad x\in L_\Phi(\M),    
\end{align*}
becomes a Banach space. Throughout this paper, we consider Orlicz functions that are convex, continuous at $0$, satisfy $\Phi(0)=0$ and $\Phi(t)>0$ for all $t>0$. We denote by $L_\Phi(\M)_+$ the positive cone of $L_\Phi(\M)$, defined by
\begin{align*}
L_\Phi(\M)_+=\{x\in L_\Phi(\M): x\geq 0\}.    
\end{align*}
It is elementary to check that $\Phi$ is continuous and strictly increasing on $[0,\infty)$.
Note that for $\Phi(t)=t^p$ with $1\leq p<\infty$, the noncommutative Orlicz space $L_\Phi(\M)$ coincides isometrically with $L_p(\M)$.

\begin{defn}[$\Delta_2$-condition]
An Orlicz function $\Phi:[0,\infty)\rightarrow[0,\infty)$ is said to satisfy the $\Delta_2$-condition, if there exists a positive constant $k>0$ such that
\begin{align*}
 \Phi(2t)\leq k\Phi(t), \qquad t\geq 0. 
\end{align*}
For e.g., $\Phi(t)=t\log (1+t)$, $t\geq 0$, satisfy $\Delta_2$ condition with $k=4$. Indeed, since $1+2t\leq (1+t)^2$ for every $t\geq 0$, 
\begin{align*}
 \Phi(2t)= 2t\log(1+ 2t)\leq 4t\log (1+t)=4\Phi(t).  
\end{align*}
\end{defn}
The following lemma will be used in Theorem \ref{Thm:AUconvergenceInOrliczSpace}.
\begin{lem}\label{Lem:MIntersecL1DenseInOrliczspace}\cite[Proposition 2.3]{CL17}
If an Orlicz function $\Phi$ satisfy $\Delta_2$-condition, then $L_1(\M)\cap \M$ is dense in $L_\Phi(\M)$ in the norm $\norm{\cdot}_\Phi$.   
\end{lem}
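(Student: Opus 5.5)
The claim is that $L_1(\M)\cap\M$ is $\norm{\cdot}_\Phi$-dense in $L_\Phi(\M)$ when $\Phi$ satisfies $\Delta_2$. The plan is to approximate first by truncations (bounded operators), then by cutting to a projection of finite trace. Since $L_\Phi(\M)$ is spanned by its positive elements (real/imaginary and positive/negative parts, each dominated by $\abs{x}$ or $\abs{x^*}$ in the Orlicz sense), it suffices to treat $x\in L_\Phi(\M)_+$. Write $x=\int_0^\infty\lambda\,de_\lambda$.

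First, the $\Delta_2$-condition upgrades membership to modular finiteness at every scale. If $\tau(\Phi(\beta x))<\infty$ for some $\beta>0$, iterating $\Phi(2t)\le k\Phi(t)$ gives $\tau(\Phi(c x))<\infty$ for all $c>0$. Consequently, norm convergence $y_m\to 0$ in $L_\Phi(\M)$ is equivalent to $\tau(\Phi(c\abs{y_m}))\to0$ for every $c>0$. This is because $\norm{y}_\Phi\le \varepsilon$ as soon as $\tau(\Phi(\abs{y}/\varepsilon))\le 1$.

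Next, set $x_{m}=x\,e_{[1/m,m]}(x)$, i.e.\ the functional calculus $f_m(x)$ with $f_m(\lambda)=\lambda\chi_{[1/m,m]}(\lambda)$. Then $x_m\in\M$ with $\norm{x_m}_\infty\le m$. Moreover, since $\Phi(1/m)>0$ and $\Phi(1/m)\,\tau(e_{[1/m,\infty)}(x))\le\tau(\Phi(x))<\infty$, the projection $e_{[1/m,\infty)}(x)$ has finite trace. Hence $\tau(x_m)\le m\,\tau(e_{[1/m,m]}(x))<\infty$, so $x_m\in L_1(\M)\cap\M$. The remainder is $x-x_m=g_m(x)$, where $g_m(\lambda)=\lambda\chi_{[0,1/m)\cup(m,\infty)}(\lambda)$. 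For each $c>0$,
\begin{align*}
\tau\big(\Phi(c(x-x_m))\big)=\int_{[0,1/m)\cup(m,\infty)}\Phi(c\lambda)\,d\tau(e_\lambda).
\end{align*}
The integrand is dominated by $\Phi(c\lambda)$, which is integrable against $d\tau(e_\lambda)$ because $\tau(\Phi(cx))<\infty$. The indicator tends to $\chi_{\{0\}}$ pointwise, and $\Phi(0)=0$. Dominated convergence therefore gives $\tau(\Phi(c(x-x_m)))\to0$, and the first step converts this into $\norm{x-x_m}_\Phi\to0$.

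The main obstacle is the first step together with the finite-trace claim for the cut-off projections. This is exactly where $\Delta_2$ is essential: without it the modular finiteness holds only at one scale, and tails need not be small in norm, as in the $L_\infty$-type examples. Everything after that is routine spectral calculus.
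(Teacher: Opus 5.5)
Your proof is correct. The paper gives no argument for this lemma; it imports it from \cite[Proposition 2.3]{CL17}. So your write-up is a self-contained replacement for a citation rather than a variant of something in the text.

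The usual abstract route to this fact runs as follows. The $\Delta_2$-condition makes the Luxemburg norm order continuous. In a symmetric space with order continuous norm, the spectral truncations $x_m=x\,e_{[1/m,m]}(x)$ converge to $x\geq 0$ because $x-x_m\downarrow 0$. Your dominated-convergence step is in effect a hands-on proof of that order continuity along this particular decreasing sequence, so the two arguments share the same skeleton. Yours needs nothing from the structure theory of symmetric operator spaces: it uses only functional calculus, the spectral measure $B\mapsto\tau(e_B(x))$ and Lebesgue's theorem. The abstract route, in exchange, applies unchanged to any symmetric space with order continuous norm.

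Two small corrections to your commentary, neither affecting validity.

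First, the finite-trace claim does not use $\Delta_2$. For the $\beta$ with $\tau(\Phi(\beta x))<\infty$ one already has $\Phi(\beta/m)\,\tau(e_{[1/m,\infty)}(x))\leq\tau(\Phi(\beta x))<\infty$. Hence $x_m\in L_1(\M)\cap\M$ for every Orlicz function positive on $(0,\infty)$. The $\Delta_2$-condition enters exactly once: it makes $\tau(\Phi(cx))$ finite at the large scales $c=1/\varepsilon$. That finiteness is what lets you dominate $\Phi(c\lambda)\chi_{[0,1/m)\cup(m,\infty)}(\lambda)$ and conclude $\tau(\Phi((x-x_m)/\varepsilon))\leq 1$ eventually.

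Second, in the reduction to positive elements, your parenthetical ``dominated by $\abs{x}$ or $\abs{x^*}$'' should not be read as operator domination. The inequality $\RE x\leq\abs{x}$ fails for non-normal $x$. The correct justification is that $L_\Phi(\M)$ is closed under $x\mapsto x^*$, since $\mu_t(x^*)=\mu_t(x)$. It is also closed under $y\mapsto y_{\pm}$ for self-adjoint $y$, since $0\leq y_\pm\leq\abs{y}$.
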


It is known that $(L_\Phi(\M),\norm{\cdot}_\Phi)$ is a fully symmetric space with the Fatou property. Moreover, $(L_\Phi(\M),\norm{\cdot}_\Phi)$
is an exact interpolation space for the Banach couple $(L_1(\M), \M)$ $($see, \cite[Corollary 2.2]{CL17} for details$)$.
For details on noncommutative Orlicz spaces, we refer the readers to \cite{GL20}.

\subsection{Vector-valued noncommutative $L_p$-spaces}
For $1\leq p<\infty$, let $L_p(\M; \ell_\infty)$ be the space of all sequences $x=(x_n)_{n\geq 0}\subseteq L_p(\M)$ which admit the following factorization: there exist $a,b\in L_{2p}(\M)$ and a bounded
sequence $y=(y_n)_{n\geq 0}\subseteq \M$ such that $x_n=ay_n b$ for $n\geq 0$. Note that this factorization is not unique. Then $L_p(\M; \ell_\infty)$ equipped with the norm:
\begin{align*}
\norm{x}_{L_p(\M;\ell_\infty)}=\inf\left\{\norm{a}_{2p}\sup_{n\geq 0}\norm{y_n}_{\infty} \norm{b}_{2p}: x_n=ay_n b,\,\, a,b\in L_{2p}(\M), y_n\in \M,\,\,\forall n\geq 0\right\},
\end{align*}
becomes a Banach space, where the infimum is taken over all factorizations of $x$ defined above. We also denote $\norm{{\underset{n\geq 0}{{\sup}^{+}}} \, x_n}_p=\norm{x}_{L_p(\M;\ell_\infty)}$, and these two notations will be used interchangeably. Let $x=(x_n)_{n\geq 0}\subseteq L_p(\M)_+$ be a positive sequence. Then it can be shown that $x\in L_p(\M;\ell_\infty)$ if and only if there exists an element $a\in L_p(\M)_+$ such that $x_n\leq a$ for all $n$, and 
\begin{align*}
\norm{x}_{L_p(\M;\ell_\infty)}=\inf\left\{\norm{a}_{p}: \,\,\, x_n\leq a, \forall n\geq 0, \,\,a\in L_p(\M)_+\right\}.
\end{align*}
For $p=\infty$, we use the convention $L_\infty(\M;\ell_\infty):=\ell_\infty(\M))$ equipped with the standard norm.

On the other hand, for $2\leq p\leq \infty$, let $L_p(\M;\ell_\infty^c)$ 
be the space of all sequences $x=(x_n)_{n\geq 0}\subseteq L_p(\M)$ which admit the following factorization: there exist an element $a\in L_{p}(\M)$ and a bounded sequence $y=(y_n)_{n\geq 0}\subseteq \M$ such that $x_n=y_n a$ for $n\geq 0$. We equip $L_p(\M; \ell_\infty^c)$ with the norm:
\begin{align*}
\norm{x}_{L_p(\M;\ell_\infty^c)}=\inf\left\{\norm{a}_{p}\sup_{n\geq 0}\norm{y_n}_{\infty}: x_n=y_n a,\,\, a\in L_{p}(\M), y_n\in \M,\,\,\forall n\geq 0\right\},
\end{align*}
where the infimum is taken over all factorizations of $x$ defined above. Then $L_p(\M; \ell_\infty^c)$, equipped with this norm, is a Banach space. It is known that $(x_n)_{n\geq 0}\in L_p(\M;\ell_\infty^c)$ if and only if $(x_n^* x_n)_{n\geq 0}\in L_{\frac{p}{2}}(\M;\ell_\infty)$. Moreover,
\begin{align*}
\norm{(x_n)_{n\geq 0}}_{L_p(\M;\ell_\infty^c)}=\norm{(x_n^* x_n)_{n\geq 0}}_{L_{\frac{p}{2}}(\M;\ell_\infty)}^\frac{1}{2}.
\end{align*}
For more details on vector-valued noncommutative $L_p$ spaces, we refer \cite{JX07, Pi98, Ju02, Mu03}.

\subsection{Dunford-Schwartz Operators}
In this section, we discuss the Dunford-Schwartz operators. We begin with the following definition. 
\begin{defn}
A linear map $T:L_{1}(\M)+\M\longrightarrow L_{1}(\M)+\M$ is called Dunford-Schwartz operator if it satisfies the following two conditions:
\begin{align*}
\norm{Tx}_\infty\leq \norm{x}_\infty, \quad\forall x\in \M\qquad\text{ and }\qquad  \norm{Tx}_1\leq \norm{x}_1,\quad\forall x\in L_1(\M).  
\end{align*}
In addition, if $T(x)\geq 0$ for all $x\geq 0$, then $T$ is called a positive Dunford-Schwartz operator. Throughout the paper, we use $DS$ $($resp. $DS^+)$ to denote a Dunford-Schwartz $($resp. a positive Dunford-Schwartz$)$ operator. 
\end{defn}
If $T$ is a $DS$ operator, then, since $(L_\Phi(\M),\norm{\cdot}_\Phi)$
is an exact interpolation space for the Banach couple $(L_1(\M), \M)$ $($see, \cite[Corollary 2.2]{CL17} for details$)$, we have 
\begin{align*}
  \norm{Tx}_\Phi\leq\norm{x}_\Phi, \qquad x\in L_\Phi(\M).  
\end{align*}
Consequently, $T(L_\Phi(\M))\subseteq L_\Phi(\M)$.

We now recall the following H\"older-type inequality for $DS^+$ operators $($see \cite[Lemma 4.1]{HJZ26}$)$. The main idea of its proof can be found in \cite[Lemma 3.1]{Ob24}.

\begin{lem}\label{Lem:HolderInequalityForOperator}
Let $T$ be a $DS^+$ operator on $\M$. Let $1<p,q<\infty$ be such that $\frac{1}{p}+\frac{1}{q}=1$. Then for every positive sequence $\{a_k\}_{k\geq 1}$ of real numbers and for any positive 
$x\in L_1(\M)+\M$, we have
\begin{align}\label{Eq:HolderInequality}
\frac{1}{n}\sum_{k=1}^{n} a_k T^k x\leq   \left(\frac{1}{n}\sum_{k=1}^{n} a_k^p\right)^\frac{1}{p}\left(\frac{1}{n}\sum_{k=1}^{n} T^{k}(x^q)\right)^\frac{1}{q}, \quad\forall n\geq 1.  
\end{align}
\end{lem}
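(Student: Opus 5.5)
The statement to prove is Lemma \ref{Lem:HolderInequalityForOperator}, an operator Hölder inequality for $DS^+$ operators. The plan is to reduce it to a pointwise Young-type inequality that is preserved by positivity, using only the positivity of $T$ and operator convexity facts.

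First I would reduce to $\frac{1}{n}\sum a_k^p=1$. Both sides scale linearly under $a_k\mapsto \lambda a_k$, and the inequality is trivial if all $a_k=0$. Set
\[
S=\frac{1}{n}\sum_{k=1}^n T^k(x^q) \geq 0 .
\]
The target is $\frac1n\sum a_k T^k x\le S^{1/q}$. The natural tool is the operator Young/Jensen inequality for a positive map, namely $\Psi(x)\le \Psi(x^q)^{1/q}$, valid for a unital (or subunital) positive map $\Psi$ and $q\ge 1$ (Choi/Kadison type). This holds because $t\mapsto t^{1/q}$ is operator concave and $t\mapsto t^q$ is convex. However, $\Psi(y)=\frac1n\sum a_kT^k y$ is not subunital: in general $\Psi(1)\le \frac1n\sum a_k$ only, and $\frac1n\sum a_k\le(\frac1n\sum a_k^p)^{1/p}=1$ by scalar Hölder. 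So $\Psi$ is subunital after normalization, since $T^k(1)\le 1$ because $T$ is a contraction on $\M$ and positive.

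The main steps are then as follows.
\begin{enumerate}[label=(\arabic*)]
\item Show $\Psi(y)^q\le \Psi(y^q)$ for positive $y$, for a positive subunital map, with $1\le q\le 2$ via operator convexity (Choi's inequality). For $q>2$, $t^q$ is not operator convex, so instead I would use the equivalent concave form $\Psi(z^{1/q})\le \Psi(z)^{1/q}$ with $z=x^q$. This holds for all $q\ge1$ since $t^{1/q}$ is operator monotone and concave, by Hansen–Pedersen/Choi for positive maps; positive (not completely positive) maps suffice because the relevant algebra generated by $z$ is commutative (Stinespring on the abelian domain).
\item Compare $\Psi(x^q)=\frac1n\sum a_kT^k(x^q)$ with $S$. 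Here the weights $a_k$ are not bounded by $1$, so the split must be finer.
\end{enumerate}

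The better route for step (2) is a Young splitting. For each $k$ and every $s>0$ we have the scalar inequality $a t\le \frac{a^p s^{-p/q}}{p}\cdot\!$ wait — more simply, I would use the operator concavity step per term with the scalar Hölder weights. Write the average as an integral against the measure $\mu=\frac1n\sum\delta_k$ on $\{1,\dots,n\}\times$(the states), and apply Jensen to the positive subunital map
\[
\Theta: L_\infty(\mu)\bar\otimes \M \to \M,\qquad (y_k)\mapsto \frac1n\sum T^k y_k ,
\]
with the element $z=(x^q)_k$ and the scalar function $a=(a_k)$ in the commutative first factor. Then
\[
\Theta(a\, z^{1/q})\le \Theta(a^p)^{1/p}\,\Theta(z)^{1/q}.
\]
This is the operator Hölder inequality for positive maps with commuting arguments $a\otimes1$ and $1\otimes x$, whose domain generates an abelian algebra. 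It follows from the joint concavity of $(u,v)\mapsto u^{1/p}v^{1/q}$ on commuting positive operators, via the Ando/Kubo–Ando geometric mean $u\,\#_{1/q}\,v$ and its monotonicity under positive maps: $\Theta(u\#_{1/q}v)\le\Theta(u)\#_{1/q}\Theta(v)$. Since $\Theta(a^p)=\frac1n\sum a_k^p\,T^k(1)$ is dominated by the scalar $\frac1n\sum a_k^p$, and the geometric mean is monotone, the claim follows.

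The main obstacle is justifying the Kubo–Ando inequality for a merely positive, not completely positive, map, and doing so on unbounded $x\in L_1+\M$. I would handle the first point by noting that the domain is abelian, so positivity implies complete positivity. I would handle the second by truncating $x$ via spectral projections and passing to limits using normality (order continuity) of $T$.
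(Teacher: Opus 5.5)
The paper gives no proof of this lemma. It quotes it from \cite[Lemma 4.1]{HJZ26} and credits the idea to \cite[Lemma 3.1]{Ob24}. Your argument therefore supplies what the paper outsources, and it is correct.

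\textbf{The main argument.} The decisive step is Ando's inequality $\Theta(A\#_{1/q}B)\le\Theta(A)\#_{1/q}\Theta(B)$ for the positive map $\Theta\big((y_k)_k\big)=\frac1n\sum_k T^k(y_k)$ on $\ell_\infty^n\otimes\M$. You apply it to the commuting elements $A=(a_k^p1)_k$ and $B=(x^q)_k$, for which $A\#_{1/q}B=A^{1/p}B^{1/q}=(a_kx)_k$. Then $\Theta(A)\le\lambda 1$ with $\lambda=\frac1n\sum_k a_k^p$, and monotonicity of the mean gives $(\lambda1)\#_{1/q}S=\lambda^{1/p}S^{1/q}$.

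Your intermediate display $\Theta(a\,z^{1/q})\le\Theta(a^p)^{1/p}\Theta(z)^{1/q}$ must be read as $\Theta(A)\#_{1/q}\Theta(B)$. Written as a product it need not be self-adjoint. The product form is legitimate only after $\Theta(A)$ has been replaced by the scalar $\lambda$.

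\textbf{A shorter route.} The obstacle you hit in step (2) disappears if the weights go into the map and compensating factors into the argument. Drop indices with $a_k=0$ and normalize $\lambda=1$. Put $\lambda_k=a_k^p/n$ and $\Theta'\big((y_k)_k\big)=\sum_k\lambda_kT^k(y_k)$, which is positive with $\Theta'(1)\le 1$, and set $z=(a_k^{-p}x^q)_k$. Then $\Theta'(z^{1/q})=\frac1n\sum_ka_kT^kx$ and $\Theta'(z)\le S$. One application of $\Theta'(z^{1/q})\le\Theta'(z)^{1/q}$ finishes the proof. This is Stinespring on the abelian algebra generated by $z$ plus the Hansen--Pedersen Jensen inequality for the operator concave $t^{1/q}$, and it needs no Kubo--Ando theory.

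\textbf{The unbounded case.} The right-hand side makes sense only if $x^q\in L_1(\M)+\M$, a proviso the statement omits. Under it, your truncation $x_m=xe_{[0,m]}(x)$ works without any normality of $T$:
\begin{enumerate}[label=(\arabic*)]
\item $\frac1n\sum_kT^k(x_m^q)\le S$ and operator monotonicity of $t^{1/q}$ bound the right-hand side uniformly in $m$.
\item $x_m\to x$ in $L_1(\M)+\M$, and $T$ is a contraction there, so the left-hand sides converge in measure.
\item The positive cone is closed in measure, so the inequality passes to the limit.
\end{enumerate}

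\textbf{What your formulation buys.} Neither of your arguments uses that the weights are $\frac1n$, or that the argument of $T^k$ is independent of $k$; note that $(a_k^p1)_k$ is central in $\ell_\infty^n\otimes\M$. For arbitrary $c_k\ge0$ and positive $y_k$ you get $\sum_kc_ka_kT^k(y_k)\le\big(\sum_kc_ka_k^p\big)^{1/p}\big(\sum_kc_kT^k(y_k^q)\big)^{1/q}$. This is the form the paper actually invokes in Eq.~\eqref{Eq:MnxdeltaEstimate}, with $c_k=A_{n-k}^{\alpha-1}/A_n^\alpha$, $a_k=w_k$, $y_k=b_kx_\delta$ and exponents $(q,q^\prime)$. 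The quoted statement, with uniform weights and a fixed $x$, does not literally cover it. The citation buys brevity; your direct-sum formulation buys the version that is used.
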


\subsection{Density and Convergence}
In this section, we recall the definition of density for a sequence of natural numbers and discuss the types of convergence in noncommutative $L_p$-spaces. Throughout this article, we denote $\mathbb{N}_0:=\mathbb{N}\cup\{0\}$. 

\begin{defn}[Density]
Let $\mathbf{k} = \{k_j\}_{j \geq 0} \subseteq \mathbb{N}_0$. Then a non-negative real number $d$ is said to be the density $($resp. lower density$)$ of $\mathbf{k}$ if 
\begin{align*}
d=\lim_{n \rightarrow \infty}\frac{|\{0,1,2,\ldots,n\} \cap \mathbf{k}|}{n+1} 
\quad  \bigg(\text{resp. } \quad\liminf_{n \rightarrow \infty} \frac{|\{0,1,2,...,n\} \cap \mathbf{k}|}{n+1}\bigg).
\end{align*}
\end{defn}
\noindent It is known that if $\mathbf{k}$ has density $d>0$ then $ \lim_{n \to \infty} \frac{k_n}{n} = \frac{1}{d}$ and  $\mathbf{k}$ has lower density $d >0$ iff $ \sup_{n \in \mathbb{N}} \frac{k_n}{n}< \infty$ $($see, \cite[Lemma 40]{Ro94}$)$.

Now we recall the definition of almost uniform $(a.u.$ in short$)$ and bilateral almost   
uniform $(b.a.u.$ in short$)$.
\begin{defn}
Let $\{x_n\}$ be a sequence of operators in $L_0(\M)$. Then we say that $x_n$ converges $a.u.$ $($resp. $b.a.u.)$  to $x\in L_0(\M)$ if for any $\varepsilon>0$, there exists a projection $e\in\mathcal{P}(\M)$ such that 
\begin{align*}
\tau(e^\perp)\leq \varepsilon\qquad\text{ and }\qquad\lim_{n\rightarrow\infty}\norm{(x-x_n)e}_\infty=0 \quad (\text{resp. }\lim_{n\rightarrow\infty}\norm{e(x-x_n)e}_\infty=0).   
\end{align*}
\end{defn}

Let us state the following important lemma, which will be useful throughout the paper. Its proof can be found in \cite[Lemma 3]{LM01}.
\begin{lem}\label{Lem:DifferenceSmallAULemma}
If $\{x_n\}_{n \geq1}$ be a sequence in $L_0(\mathcal{M})$ such that for every $\varepsilon>0$, there exists a sequence $\{y_n\}_{n\geq 0} \subset L_0(\mathcal{M})$ which is $a.u.$ $($resp. $b.a.u.)$ convergent and satisfies
\begin{align*}
\norm{x_n - y_n}_{\infty} \leq \varepsilon, \quad \text{ for } n\geq N,    
\end{align*}
for some $N \in \mathbb{N}$. Then $\{x_n\}_{n \geq 0}$ is also $a.u.$ $($resp. $b.a.u.)$ convergent. 
\end{lem}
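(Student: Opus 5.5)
The plan is to show first that $\{x_n\}$ is \emph{almost uniformly Cauchy}, and then to produce the limit using completeness of $L_0(\M)$ in the measure topology. I give the argument for the $a.u.$ case. The $b.a.u.$ case is identical, with $\norm{(\cdot)e}_\infty$ replaced everywhere by $\norm{e(\cdot)e}_\infty$.

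\emph{Step 1 (a common projection).} Fix $\varepsilon>0$. For each $m\in\mathbb{N}$, apply the hypothesis with tolerance $1/m$. This gives an $a.u.$ convergent sequence $\{y^{(m)}_n\}_n$ with limit $y^{(m)}$, and an index $N_m$ such that $\norm{x_n-y^{(m)}_n}_\infty\le 1/m$ for $n\ge N_m$. By $a.u.$ convergence, choose $e_m\in\mathcal{P}(\M)$ with
\begin{align*}
\tau(e_m^\perp)\le \varepsilon 2^{-m}\qquad\text{and}\qquad \norm{(y^{(m)}_n-y^{(m)})e_m}_\infty\to 0 .
\end{align*}
Put $e=\bigwedge_m e_m$. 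Then $e^\perp=\bigvee_m e_m^\perp$, so $\tau(e^\perp)\le\sum_m\tau(e_m^\perp)\le\varepsilon$.

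\emph{Step 2 (uniform Cauchy property on $e$).} For $n,k\ge N_m$ we estimate
\begin{align*}
\norm{(x_n-x_k)e}_\infty\le \frac{2}{m}+\norm{(y^{(m)}_n-y^{(m)}_k)e_m e}_\infty\le \frac 2m+\norm{(y^{(m)}_n-y^{(m)}_k)e_m}_\infty .
\end{align*}
The last term tends to $0$ as $n,k\to\infty$. Letting first $n,k\to\infty$ and then $m\to\infty$ shows that $\{(x_n-x_{N})e\}_n$ is Cauchy in $\M$ for a fixed $N$. Hence $x_ne$ converges in operator norm, to some $z=x_Ne+w$ with $w\in\M$.

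\emph{Step 3 (identifying the limit).} Step 2 works for every $\varepsilon$, so for every $\delta>0$ there is a projection $f$ with $\tau(f^\perp)\le\delta$ and $\norm{(x_n-x_k)f}_\infty<\delta$ for large $n,k$. Thus $\{x_n\}$ is Cauchy in the measure topology. By completeness of $L_0(\M)$, it converges in measure to some $x\in L_0(\M)$. Right multiplication by $e$ is continuous in measure, so $x_ne\to xe$ in measure. Norm convergence implies convergence in measure, so also $x_ne\to z$ in measure. By uniqueness of limits, $xe=z$, and therefore $\norm{(x_n-x)e}_\infty\to0$. Since $x$ does not depend on $\varepsilon$, this is exactly $x_n\to x$ almost uniformly.

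The only delicate point is Step 3: producing a genuine limit in $L_0(\M)$ and checking that it matches the norm limits obtained on each projection. This needs completeness of the measure topology and continuity of multiplication in it; Steps 1 and 2 are routine $\varepsilon/3$ bookkeeping. In the bilateral case one uses instead that $x\mapsto exe$ is continuous in measure, and the same uniqueness argument gives $e x e$ as the norm limit of $ex_ne$.
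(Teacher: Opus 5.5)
The paper does not prove this lemma itself; it quotes it from \cite[Lemma 3]{LM01}. Your argument is correct and self-contained, and it follows the standard route for this statement. Steps 1--2 show that $\{x_n\}$ is uniformly Cauchy, in the one- or two-sided sense, on a single projection $e=\bigwedge_m e_m$ with $\tau(e^\perp)\le\varepsilon$. Step 3 uses completeness of $L_0(\M)$ in measure and continuity of multiplication to turn this Cauchy property into genuine a.u.\ (b.a.u.) convergence to one limit $x$ that does not depend on $\varepsilon$.

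The bilateral case is not literally ``identical'' at the first sentence of Step 3. From $\tau(f^\perp)\le\delta$ and $\norm{f(x_n-x_k)f}_\infty<\delta$ you cannot read off Cauchyness in measure directly, because the measure topology is defined by one-sided neighbourhoods. The claim is true and takes one line. For $z\in L_0(\M)$ with $\norm{fzf}_\infty\le\delta$, put $g=f\wedge r(f^\perp z)^\perp$, where $r(\cdot)$ denotes the right support projection. Since $r(f^\perp z)$ is Murray--von Neumann equivalent to the left support $l(f^\perp z)\le f^\perp$, you get $\tau(g^\perp)\le 2\tau(f^\perp)\le 2\delta$. Moreover $f^\perp z g=0$ and $fzg=fzfg$, so $zg=fzfg$ and $\norm{zg}_\infty\le\delta$. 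The paper uses the same fact implicitly when it states, in the proof of Theorem \ref{Thm:AUconvergenceInOrliczSpace}, that b.a.u.\ convergence implies convergence in measure. With this line added, your proof of the bilateral case is complete.
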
 

The following definition will be used in Theorem \ref{Thm:buemAt0ForM_nbTx}.

\begin{defn}\cite[Definition 2.1]{Li12} $(u.e.m./b.u.e.m.)$
Let $X_0$ be a subset of a normed space $(X,\norm{\cdot})$ containing $0$. A family of additive maps $a_n: X\rightarrow L_0(\M)$, $n\in\mathbb{N}$, is said to be \textit{uniformly equicontinuous in measure} $(u.e.m.$ in short$)$ $($resp. \textit{bilaterally uniformly
equicontinuous in measure} $(b.u.e.m.$ in short$))$ at $0$ on $X_0$ if given any $\varepsilon>0$ and $\delta>0$, there is a $\gamma>0$ such that for every $x\in X_0$ with $\norm{x}<\gamma$, there exists a projection $e\in\mathcal{P}(\M)$ for which 
\begin{align*}
\tau(e^\perp)\leq\varepsilon\quad\text{ and }\quad \sup_{n}\norm{a_n(x)e}_\infty\leq \delta  \quad(\text{resp. }\sup_{n}\norm{e a_n(x)e}_\infty\leq \delta  ).  
\end{align*}
\end{defn}

\subsection{The $(C,\alpha)$-coefficients}
In this section, we discuss on $(C,\alpha)$-coefficients $A_n^\alpha$ for $\alpha>-1$. More details can be found in \cite{Yo20}. For $\alpha>-1$ and $n\in\mathbb{N}$, the $(C,\alpha)$-coefficients $A_n^{\alpha}$
have the following properties:
\begin{align}\label{Eq:CalphaCoeffient}
\nonumber&A_n^{\alpha} >0, \quad A_0^{\alpha} = A_n^0 = 1, \quad A_n^{\alpha-1}=A_{n}^\alpha-A_{n-1}^\alpha, 
\quad A_n^{\alpha} = \sum_{k=0}^n A_{n-k}^{\alpha-1}, \\
\nonumber&A_n^{\alpha}
=\binom{n+\alpha}{n} =\frac{(\alpha+1)\cdots(\alpha+n)}{n!}= \frac{\Gamma(n+\alpha+1)}{\Gamma(n+1)\Gamma(\alpha+1)}\\
& \lim_{n\rightarrow\infty}A_n^\alpha\frac{\Gamma(\alpha+1)}{n^\alpha}=1.
\end{align}
For $\alpha>0$, $A_n^{\alpha}$ is increasing with respect to $n$.
Moreover, 
\begin{align}\label{Eq:AnalphaInequality}
\frac{n^\alpha}{\Gamma(\alpha+1)} \leq A_n^\alpha \lesssim_{\alpha} (n+1)^\alpha.
\end{align}
On the other hand, if $-1<\alpha<0$, then $A_n^{\alpha}$ is decreasing with respect to $n$ and 
\begin{align*}
 \frac{(n+1)^{\alpha}}{\Gamma(\alpha+1)}\leq A_n^\alpha\leq \frac{n^\alpha}{\Gamma(\alpha+1)}.    
\end{align*}

\section{$p$-convex Orlicz functions}\label{Sec:pConvexOrliczSpace}
In this section, we discuss on $p$-convex Orlicz functions. We begin with the following definition. 
\begin{defn}[$p$-convexity]\label{Defn:pconvex}
Let $1<p<\infty$. An Orlicz function $\Phi$ is said to be
\emph{$p$-convex Orlicz function} if the function
\begin{align*}
t\longmapsto\Phi\bigl(t^\frac{1}{p}\bigr),\qquad t\geq 0,
\end{align*}
is convex.
\end{defn}

Now we discuss some properties of $p$-convex Orlicz functions. To avoid any future confusion, we use the notation $\Phi^\frac{1}{p}(t):=\left[\Phi(t)\right]^\frac{1}{p}$, $t\geq 0$, for the rest of the paper. 
\begin{lem}\label{Lem:Ratio}
Let $p\geq 1$. If $\Phi$ is a $p$-convex Orlicz function, then the following hold.
\begin{enumerate}[label=(\roman*)]
\item The map $s\longmapsto\frac{\Phi(s)}{s^{p}}$
  is nondecreasing on $(0,\infty)$.
\item The map $s\mapsto \frac{\Phi^\frac{1}{p}(s)}{s}$ is nondecreasing on $(0,\infty)$.
\item We have $\Phi(\alpha s)\leq\alpha^{p}\,\Phi(s)$, for all $s\geq 0$ and $0\leq\alpha\leq 1$.
\end{enumerate}
\end{lem}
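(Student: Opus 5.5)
The whole lemma follows from one elementary fact: a convex function $g$ on $[0,\infty)$ with $g(0)=0$ has nondecreasing chord slopes $g(u)/u$ on $(0,\infty)$. We apply this to $g(u)=\Phi(u^{1/p})$, which is convex by the definition of $p$-convexity (Definition \ref{Defn:pconvex}) and satisfies $g(0)=\Phi(0)=0$. The fact itself comes from writing, for $0<u<v$,
\begin{align*}
u=\tfrac{u}{v}\,v+\bigl(1-\tfrac{u}{v}\bigr)\,0 ,
\end{align*}
and using convexity, which gives
\begin{align*}
g(u)\leq \tfrac{u}{v}\,g(v)+\bigl(1-\tfrac{u}{v}\bigr)g(0)=\tfrac{u}{v}\,g(v),
\end{align*}
that is, $g(u)/u\leq g(v)/v$. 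For the case $p=1$ we simply have $g=\Phi$, and the same argument applies verbatim.

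For (i), take $0<s<t$ and set $u=s^p<v=t^p$. Since $g(u)=\Phi(s)$ and $g(v)=\Phi(t)$, the monotonicity of the chord slope gives $\Phi(s)/s^p\leq \Phi(t)/t^p$.

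For (ii), note that $\Phi^{1/p}(s)/s=\bigl(\Phi(s)/s^p\bigr)^{1/p}$. The map $r\mapsto r^{1/p}$ is nondecreasing on $[0,\infty)$ and $\Phi\geq 0$, so (ii) follows from (i) at once.

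For (iii), the cases $\alpha=0$ and $s=0$ are trivial because $\Phi(0)=0$. For $0<\alpha\leq 1$ and $s>0$ we have $\alpha s\leq s$, so (i) gives $\Phi(\alpha s)/(\alpha s)^p\leq \Phi(s)/s^p$, which is $\Phi(\alpha s)\leq \alpha^p\Phi(s)$. Alternatively, one can apply convexity of $g$ directly between the points $0$ and $s^p$ with weight $\alpha^p$.

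No step is a real obstacle. The only point to watch is that we use $g(0)=0$, which is where the normalization $\Phi(0)=0$ of the Orlicz function enters. No continuity or strict positivity of $\Phi$ is needed.
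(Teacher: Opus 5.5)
Your proposal is correct and follows the paper's proof essentially verbatim: the paper also sets $\Psi(t)=\Phi(t^{1/p})$, gets monotonicity of $\Psi(t)/t$ from the convex combination $t=\frac{t}{u}u+(1-\frac{t}{u})\cdot 0$ together with $\Psi(0)=0$, substitutes $t=s^p$ for (i), takes $p$-th roots for (ii), and deduces (iii) from (i). Your remark on the case $p=1$ and your alternative direct convexity argument for (iii) (weight $\alpha^p$ between $0$ and $s^p$) are both valid small additions.
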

\begin{proof}
Since $\Phi(0)=0$, put $\Psi(t)=\Phi(t^\frac{1}{p})$, so that $\Psi$ is convex with $\Psi(0)=0$.  

\noindent $(i)$. We claim that $\frac{\Psi(t)}{t}$ is nondecreasing. If $0<t\leq u$, then write 
\begin{align*}
t=\frac{t}{u}\,u+\bigl(1-\frac{t}{u}\bigr)\cdot 0.    
\end{align*}
Since $\Psi(0)=0$ and $\Psi$ is convex, 
we have
\begin{align*}
\Psi(t)\leq\frac{t}{u}\Psi(u).    
\end{align*}
Substituting $t=s^{p}$ gives that 
$\frac{\Phi(s)}{s^{p}}$ is nondecreasing. This proves $(i)$ in the statement. 

\noindent $(ii)$. The statement in $(ii)$ follows by taking the $p$-th roots in $(i)$.

\noindent $(iii)$. If $\alpha=0$ or $s=0$, then there is nothing to prove. Hence, we may assume that $s> 0$ and $0<\alpha\leq 1$. Since
$\alpha s\leq s$, by $(i)$, we have
\begin{align*}
\frac{\Phi(\alpha s)}{(\alpha s)^{p}}\leq\frac{\Phi(s)}{s^{p}}\Longrightarrow \Phi(\alpha s)\leq\alpha^{p}\,\Phi(s).    
\end{align*}
This completes the proof.
\end{proof}

The following lemma, which will be useful for our purposes, is similar in spirit to \cite[Lemma 2.1]{CL17}, although its proof differs from \cite[Lemma 2.1]{CL17}.

\begin{lem}\label{Lem:pConvex}
Let $\Phi$ be a $p$-convex Orlicz function. Then for any $\delta>0$, there exists $t>0$ such that 
\begin{align*}
t\Phi^\frac{1}{p}(u)\geq u, \quad\text{ whenever }u\geq \delta.
\end{align*}
In particular, we have $\lim_{u\rightarrow\infty} \Phi(u)=\infty$. 
\end{lem}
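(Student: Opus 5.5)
The plan is to use Lemma \ref{Lem:Ratio}(ii): for a $p$-convex Orlicz function $\Phi$, the map $s\mapsto \Phi^{\frac1p}(s)/s$ is nondecreasing on $(0,\infty)$. Fix $\delta>0$. Since $\Phi(\delta)>0$ by the standing assumption that $\Phi(t)>0$ for $t>0$, the number
\begin{align*}
c_\delta:=\frac{\Phi^{\frac1p}(\delta)}{\delta}
\end{align*}
is strictly positive. Monotonicity then gives $\Phi^{\frac1p}(u)/u\geq c_\delta$ for every $u\geq\delta$, that is, $\Phi^{\frac1p}(u)\geq c_\delta u$. We set
\begin{align*}
t:=\frac{1}{c_\delta}=\frac{\delta}{\Phi^{\frac1p}(\delta)},
\end{align*}
and obtain $t\Phi^{\frac1p}(u)\geq u$ for all $u\geq\delta$. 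This is the first claim.

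For the limit statement, take any $\delta>0$, for instance $\delta=1$, with the corresponding $t$. Then $\Phi(u)\geq (u/t)^p$ for all $u\geq 1$. Since $p\geq 1$, the right-hand side tends to $\infty$ as $u\to\infty$, and therefore $\lim_{u\to\infty}\Phi(u)=\infty$.

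There is essentially no obstacle here. The only point to check is that $c_\delta>0$, which is exactly where the positivity of $\Phi$ away from $0$ is used. Without it, for example if $\Phi\equiv 0$ near $0$, one would instead have to pick $\delta$ large. Note also that convexity of $\Phi$ itself is not needed; only the monotone ratio coming from $p$-convexity is used.
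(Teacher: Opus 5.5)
Your proof is correct and matches the paper's: the same choice $t=\delta/\Phi^{\frac1p}(\delta)$, justified by the monotonicity in Lemma~\ref{Lem:Ratio}(ii) together with $\Phi(\delta)>0$. You also spell out the limit statement via $\Phi(u)\geq (u/t)^p$, which the paper leaves implicit.
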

\begin{proof}
For any $\delta>0$, we have $\Phi(\delta)\neq 0$. Set
\begin{align*}
t:=\frac{\delta}{\Phi^\frac{1}{p}(\delta)}.
\end{align*}
By Lemma \ref{Lem:Ratio}, the map 
\begin{align*}
u\mapsto\frac{\Phi^\frac{1}{p}(u)}{u}    
\end{align*}
is nondecreasing. So for every $u\geq\delta$, we have
\begin{align*}
\frac{\Phi^{\frac{1}{p}}(\delta)}{\delta}\leq \frac{\Phi^{\frac{1}{p}}(u)}{u}, \quad \text{ that is}, \quad u\leq \frac{\delta}{\Phi^{\frac{1}{p}}(\delta)} \Phi^{\frac{1}{p}}(u).
\end{align*}
This shows that
\begin{align*}
u\leq t\Phi^\frac{1}{p}(u), \quad\text{ whenever }u\geq \delta.
\end{align*}
This completes the proof.
\end{proof}

The following proposition plays a key role in our analysis.
\begin{prop}\label{Prop:ToMakeSmallElementInOrliczSpace}
Let $\Phi$ be a $p$-convex Orlicz function. Let $x\in L_\Phi(\M)_+$
with $\norm{x}_\Phi\leq 1$. Then for any $\delta>0$, there exist $x_\delta\in \M$ with $\norm{x_\delta}_\infty\leq \delta$,  and $y\in L_p(\M)$ with $\norm{y}_p \leq \norm{x}_\Phi^\frac{1}{p}$ such that
\begin{align*}
x\leq x_\delta+ t y,   
\end{align*}
for some $t>0$.
\end{prop}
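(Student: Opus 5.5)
The plan is to split $x$ by a spectral cut at level $\delta$ and then control the large part by $\Phi$ using Lemma \ref{Lem:pConvex}. Write the spectral decomposition $x=\int_0^\infty \lambda\, de_\lambda$. Set
\begin{align*}
x_\delta := x\, e_{[0,\delta]}(x)\in\M, \qquad z:=x\, e_{(\delta,\infty)}(x).
\end{align*}
Then $0\leq x_\delta$, $\norm{x_\delta}_\infty\leq\delta$, and $x=x_\delta+z$. Both pieces are functions of $x$, so they commute with each other and with $x$. It therefore remains to dominate $z$ by $t y$ for some $y\in L_p(\M)$ with $\norm{y}_p\leq\norm{x}_\Phi^{1/p}$.

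Next apply Lemma \ref{Lem:pConvex} with this $\delta$ to get $t>0$ with $u\leq t\Phi^{1/p}(u)$ for all $u\geq\delta$. Put $f(u)=u\chi_{(\delta,\infty)}(u)$ and $g(u)=\Phi^{1/p}(u)\chi_{(\delta,\infty)}(u)$, so that $f\leq tg$ pointwise. By functional calculus, $z=f(x)\leq t\,g(x)$. The natural candidate is $y=g(x)$, or the simpler choice $y=\Phi^{1/p}(x)\geq g(x)$. Then
\begin{align*}
\norm{y}_p^p=\tau(\Phi(x)).
\end{align*}
Since $\norm{x}_\Phi\leq1$, the Fatou property and continuity of $\Phi$ give $\tau(\Phi(x/\norm{x}_\Phi))\leq 1$. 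This yields $\norm{y}_p\leq 1$, but not yet the sharper bound $\norm{x}_\Phi^{1/p}$.

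The main obstacle is obtaining exactly $\norm{x}_\Phi^{1/p}$, and here $p$-convexity enters through Lemma \ref{Lem:Ratio}(iii). Set $\lambda=\norm{x}_\Phi\leq 1$, assuming $\lambda>0$ since the case $x=0$ is trivial. For every $s\geq0$ we have $\Phi(s)=\Phi(\lambda\cdot s/\lambda)\leq\lambda^p\Phi(s/\lambda)$. This only gives
\begin{align*}
\tau(\Phi(x))\leq\lambda^p\,\tau(\Phi(x/\lambda))\leq\lambda^p,
\end{align*}
that is, $\norm{\Phi^{1/p}(x)}_p\leq\lambda$. But $\lambda\leq\lambda^{1/p}$ because $\lambda\leq1$ and $p\geq1$. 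Hence $y=\Phi^{1/p}(x)$ satisfies $\norm{y}_p\leq\norm{x}_\Phi\leq\norm{x}_\Phi^{1/p}$, which is the required estimate.

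The step needing care is the justification of $\tau(\Phi(x/\lambda))\leq1$ at the infimum itself. I would handle it by taking $\alpha_n\downarrow\lambda$ with $\tau(\Phi(x/\alpha_n))\leq1$ and applying normality of $\tau$ together with monotone convergence of $\Phi(x/\alpha_n)\uparrow\Phi(x/\lambda)$, using the continuity of $\Phi$. A second point to check is that the operator inequality $f(x)\leq t\,g(x)\leq t\,\Phi^{1/p}(x)$ holds for unbounded $\tau$-measurable $x$. This follows from the Borel functional calculus, since all the operators involved are functions of the single self-adjoint operator $x$. Collecting the pieces gives $x\leq x_\delta+ty$ with the stated bounds.
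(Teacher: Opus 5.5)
Your proposal is correct and follows essentially the same route as the paper. The paper also uses the spectral cut $x_\delta=xe_{[0,\delta]}(x)$, the constant $t$ from Lemma \ref{Lem:pConvex}, and $y=\Phi^{1/p}(x)$. The only difference is the norm bound: the paper cites \cite[Lemma 5.40]{GL20}, which rests on ordinary convexity and gives $\tau(\Phi(x))\leq\norm{x}_\Phi$, hence exactly $\norm{y}_p\leq\norm{x}_\Phi^{1/p}$. You instead combine Lemma \ref{Lem:Ratio}(iii) with your monotone-convergence justification of $\tau(\Phi(x/\norm{x}_\Phi))\leq 1$, which yields the sharper bound $\norm{y}_p\leq\norm{x}_\Phi$.
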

\begin{proof}
Let $x\in L_\Phi(\M)_+$ such that $\norm{x}_{\Phi}\leq 1$. By \cite[Lemma 5.40]{GL20}, we have $\Phi(x)\in L_1(\M)$ and
\begin{align*}
\norm{\Phi(x)}_1\leq \norm{x}_{\Phi}.   
\end{align*}
Let $x=\int_0^\infty \lambda\, de_\lambda$ be the spectral decomposition of $x$. Then, for $\delta>0$, let $t>0$ be as in Lemma \ref{Lem:pConvex}. Then, we have
\begin{align}\label{Eq:ToMakeSmallElementInOrliczSpace}
x
=\int_0^{\delta} \lambda\, de_\lambda \,+\, \int_{\delta}^{\infty} \lambda\, de_\lambda 
\underset{(\text{Lemma \ref{Lem:pConvex}})}{\leq} \int_0^{\delta} \lambda\, de_\lambda \,+\, t\int_{\delta}^{\infty} \Phi^\frac{1}{p}(\lambda)\, de_\lambda 
\leq x_\delta +ty,
\end{align}
where $x_\delta=\int_0^{\delta} \lambda\, de_\lambda $ and $y=\Phi^\frac{1}{p}(x)$. 

Note that $\norm{x_\delta}_\infty\leq\delta$ and $y\in L_p(\M)$ with $\norm{y}_p^p=\tau(\Phi(x))=\norm{\Phi(x)}_1\leq \norm{x}_\Phi$. This completes the proof.
\end{proof}

\begin{rem}\label{Rem:RotaTheorem}
\noindent $(i)$. We first remark that Lemma \ref{Lem:pConvex}
and Proposition \ref{Prop:ToMakeSmallElementInOrliczSpace} hold whenever the map $s\longmapsto\frac{\Phi(s)}{s^{p}}$
is non decreasing on $(0,\infty)$. In fact, we show below in Example \ref{Exmp:2ConvexisSmallerClass} that the condition $s\longmapsto\frac{\Phi(s)}{s^{p}}$ is increasing does not imply $p$-convexity. Moreover, in \cite{JZZ23}, the authors have shown that if an Orlicz function $\Phi$ satisfies condition $(i)$ of Lemma \ref{Lem:Ratio}, then $\Phi$ is equivalent to a $p$-convex function.

\vspace{2mm}

\noindent $(ii)$. There is a different definition of a $p$-convex Orlicz function considered in \cite{Bi24}. More precisely, an Orlicz function $\Phi:[0,\infty)\rightarrow [0,\infty)$ is said to be $p$-convex in the sense of \cite{Bi24}, if the function $\Phi^\frac{1}{p}$, defined by
\begin{align*}
\Phi^\frac{1}{p}(t)=[\Phi(t)]^\frac{1}{p}, \qquad t\geq 0,    
\end{align*}
is convex.  Note that Lemma \ref{Lem:pConvex}
and Proposition \ref{Prop:ToMakeSmallElementInOrliczSpace} hold for the class of $p$-convex Orlicz functions   considered in \cite{Bi24}. Indeed, Lemma \ref{Lem:pConvex} can be proved using an argument similar to that in \cite[Lemma 2.1]{CL17}. The proof of Proposition \ref{Prop:ToMakeSmallElementInOrliczSpace}
remains unchanged. On the other hand, we show below $($Proposition \ref{Prop:BikramConvexImplyOurConvex}$)$ that the above definition of $p$-convexity implies our definition of $p$-convexity. Moreover, the example below $($see, Example \ref{Exmp:Our2ConvexDoesNOtImplyBikram2Convex1}$)$ shows that the converse does not hold.
\end{rem}

\begin{prop}\label{Prop:BikramConvexImplyOurConvex}
Let $p\geq 1$ and let $\Phi$ be an Orlicz function. Define 
\begin{align*}
f(t):=\Phi^\frac{1}{p}(t)\qquad\text{ and }\qquad    g(t):=\Phi(t^\frac{1}{p}), 
\qquad t\geq 0.
\end{align*}
If $f$ is convex, then $g$ is convex.
\end{prop}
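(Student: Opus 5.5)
We need to show that $g(t)=\Phi(t^{1/p})=f(t^{1/p})^p$ is convex whenever $f=\Phi^{1/p}$ is convex. The plan is to write $g=h\circ f\circ \varphi$, where $\varphi(t)=t^{1/p}$ and $h(s)=s^p$, and reduce convexity of $g$ to convexity of the function $u\mapsto f(u)^p$ composed with the concave map $\varphi$. A composition of a convex function with a concave one is not convex in general, so a more careful argument is needed. We will instead work with the perspective-type formulation: $g$ is convex on $[0,\infty)$ if and only if for all $t_1,t_2\ge 0$ and $\lambda\in[0,1]$,
\begin{align*}
f\bigl((\lambda t_1+(1-\lambda)t_2)^{1/p}\bigr)^p\le \lambda f(t_1^{1/p})^p+(1-\lambda)f(t_2^{1/p})^p .
\end{align*}

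First we record that $f$ is convex, nonnegative, $f(0)=0$, and $f$ is nondecreasing; monotonicity holds because $\Phi$ is increasing. Set $u_i=t_i^{1/p}$ and $u=(\lambda u_1^p+(1-\lambda)u_2^p)^{1/p}$. The key step uses the ratio property of $f$. Since $f$ is convex with $f(0)=0$, the argument of Lemma \ref{Lem:Ratio}(i) with $p=1$ shows that $r(s):=f(s)/s$ is nondecreasing on $(0,\infty)$. Assume without loss of generality that $u_1\le u_2$, so that $u_1\le u\le u_2$. Then
\begin{align*}
f(u)^p=r(u)^p u^p=\lambda r(u)^p u_1^p+(1-\lambda) r(u)^p u_2^p .
\end{align*}
In the second term we use $r(u)\le r(u_2)$, which gives exactly $(1-\lambda)f(u_2)^p$. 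The first term is the problem, because $r(u)\ge r(u_1)$ goes the wrong way.

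This is the main obstacle, and the monotone-ratio trick alone cannot handle it. We therefore switch to a derivative argument. Since $f$ is convex it is locally Lipschitz. We may smooth $f$ or use one-sided derivatives, and it suffices to show that $g'$ is nondecreasing. We compute
\begin{align*}
g'(t)=f(u)^{p-1}f'(u)\,u^{1-p}=r(u)^{p-1}f'(u),\qquad u=t^{1/p}.
\end{align*}
Both factors are nonnegative and nondecreasing in $u$: $r$ is nondecreasing by the ratio property, and $f'$ is nondecreasing by convexity of $f$. Moreover, $u$ is increasing in $t$. Hence $g'$ is nondecreasing, and $g$ is convex.

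To make this rigorous without differentiability, we use the right derivative $f'_+$, which exists everywhere on $(0,\infty)$ and is nondecreasing. The function $g$ is locally absolutely continuous on $(0,\infty)$, being a composition of locally Lipschitz maps, and its right derivative equals $r(u)^{p-1}f'_+(u)$. A continuous function on $[0,\infty)$ with a nondecreasing right derivative on $(0,\infty)$ is convex there, and convexity extends to $0$ by continuity of $g$ at $0$. The only delicate points are the chain rule for right derivatives, which holds here because $t\mapsto t^{1/p}$ is increasing and $C^1$ on $(0,\infty)$, and the case $p=1$, which is trivial since then $f=g=\Phi$.
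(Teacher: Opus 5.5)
Your proof is correct, but it takes a different route from the paper.

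\textbf{The paper's route.} The paper proves only midpoint convexity of $g$ and then invokes continuity. Writing $s=a^p$, $t=b^p$ and $c=\bigl(\frac{a^p+b^p}{2}\bigr)^{1/p}$, it expresses $c=t_1a+t_2b$ with the explicit weights $t_1=\frac{a^{p-1}}{2c^{p-1}}$ and $t_2=\frac{b^{p-1}}{2c^{p-1}}$, which satisfy $t_1+t_2\le 1$. Convexity of $f$ together with $f(0)=0$ gives $f(c)\le t_1f(a)+t_2f(b)$. H\"older's inequality, using $(t_1^{q}+t_2^{q})^{1/q}=2^{-1/p}$, then bounds this by $\bigl(\frac{\Phi(a)+\Phi(b)}{2}\bigr)^{1/p}$. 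The argument is purely algebraic and needs no differentiability.

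\textbf{Your route.} Your argument is first-order instead. The identity $g'_+(t)=\bigl(f(u)/u\bigr)^{p-1}f'_+(u)$ with $u=t^{1/p}$ can equivalently be written $g'_+(t)=\Phi'_+(u)/(pu^{p-1})$. This shows that $g$ is convex exactly when $u\mapsto \Phi'_+(u)/u^{p-1}$ is nondecreasing. It also isolates the two properties of $f$ that deliver this: monotonicity of the ratio $f(u)/u$ and of $f'_+$, both of which are nonnegative. That is more transparent and yields a characterization.

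\textbf{What it costs.} The price is some real-analysis bookkeeping. The one-sided chain rule you handle correctly. You also use the fact that a continuous function with nondecreasing right derivative is convex. This is true, but it is not entirely trivial for merely right-differentiable functions, and you assert it without justification. You have already noted that $g$ is locally absolutely continuous on $(0,\infty)$, so the cleanest way to close the gap is to write $g(t)-g(s)=\int_s^t g'_+(\tau)\,d\tau$, since $g'=g'_+$ almost everywhere. An integral of a nondecreasing function is convex. The abandoned ratio-trick paragraph can simply be deleted.
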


\begin{proof}
Let us assume that $f$ is convex. We show that $g$ is convex.
In the view of \cite[E. 24, pp. 101]{Ru76}, it is enough to show 
\begin{align*}
g\left(\frac{s+t}{2}\right)\leq \frac{g(s)+g(t)}{2}, \qquad s,t\geq 0.  
\end{align*}
Let $s^\frac{1}{p}=a$ and $t^\frac{1}{p}=b$, so that $s=a^p$ and $t=b^p$. Then the above inequality is equivalent to
\begin{align*}
g\left(\frac{a^p+b^p}{2}\right)\leq \frac{g(a^p)+g(b^p)}{2}.    
\end{align*}
Set 
\begin{align*}
    c=\left(\frac{a^p+b^p}{2}\right)^\frac{1}{p}, \qquad t_1=\frac{a^{p-1}}{2c^{p-1}} \qquad\text{ and }\qquad t_2=\frac{b^{p-1}}{2c^{p-1}}.
\end{align*}
Then, 
\begin{align*}
c=t_1 a+ t_2 b \qquad\text{ and }\qquad
t_1+t_2\underset{(\text{{H}\"older})}{\leq} 2^\frac{1}{p} (t_1^q+t_2^q)^\frac{1}{q}  \leq 1.   
\end{align*}
Since $f$ is convex, we have
\begin{align*}
f(c)
=f(t_1 a+ t_2 b)
\leq t_1 f(a)+t_1 f(b)
= & (2^\frac{1}{p}t_1) \frac{f(a)}{2^\frac{1}{p}} +(2^\frac{1}{p}t_2) \frac{f(b)}{2^\frac{1}{p}}\\ 
\underset{(\text{{H}\"older})}{\leq}& 
\left((2^\frac{1}{p}t_1)^q+(2^\frac{1}{p}t_2)^q\right)^\frac{1}{q} \left(
\frac{f(a)^p}{2} + \frac{f(b)^p}{2}\right)^\frac{1}{p}\\
{\leq}& 
\left(
\frac{f(a)^p}{2} + \frac{f(b)^p}{2}\right)^\frac{1}{p}\\
=&\left(
\frac{\Phi(a)+\Phi(b)}{2}\right)^\frac{1}{p}\\
=&\left(
\frac{g(a^p)+g(b^p)}{2}\right)^\frac{1}{p}.
\end{align*}
Consequently, the above equation becomes
\begin{align*}
g\left(\frac{a^p+b^p}{2}\right)
=[f(c)]^p
\leq
\frac{g(a^p)+g(b^p)}{2}.
\end{align*}
This completes the proof.
\end{proof}

\begin{exmp}\label{Exmp:Our2ConvexDoesNOtImplyBikram2Convex1}
Let $\Phi:[0,\infty)\rightarrow [0,\infty)$ be defined by
 \begin{align*}
\Phi(t)
=
\begin{cases}
t^2, \qquad &0\leq t\leq 1;\\ 
2t^2-1, \qquad &t\geq 1.
\end{cases}     
\end{align*}   
Then, $\Phi$ is convex, strictly increasing satisfying $\Phi(0)=0$. 

The Orlicz function $\Phi$ satisfies the $\Delta_2$-condition. Indeed, for $t>0$, we have
\begin{align*}
\frac{\Phi(2t)}{\Phi(t)}
=
\begin{cases}
4, & 0<t\leq \frac{1}{2},\\[2mm]
8-\dfrac{1}{t^2}, & \frac{1}{2}<t\leq 1,\\
4+\dfrac{3}{2t^2-1}, & t>1.
\end{cases}
\end{align*}
Note that the right-hand side of each expression is at most $7$. Thus, 
\begin{align*}
\Phi(2t)\leq 7\Phi(t), \qquad t\geq 0.
\end{align*}
Moreover, we have
\begin{align*}
\Phi(t^\frac{1}{2})
=
\begin{cases}
t, \qquad &0\leq t\leq 1;\\ 
2t-1, \qquad &t\geq 1.
\end{cases}
\end{align*}
Thus, $\Phi$ is $2$-convex in our sense, that is the map $t\longmapsto\Phi(t^\frac{1}{2})$ is convex.
However, the function
\begin{align*}
\Psi(t):=(\Phi(t))^\frac{1}{2}
=
\begin{cases}
t, \qquad &0\leq t\leq 1;\\ 
\sqrt{2t^2-1}, \qquad &t\geq 1,
\end{cases}     
\end{align*}
is not convex, since for $t>1$, we have
\begin{align*}
\Psi^{\prime\prime}(t)
=\frac{-2}{(2t^2-1)^{\frac{3}{2}}}<0. 
\end{align*}
This $\Phi$ is not a $2$-convex Orlicz function in the sense of \cite{Bi24}. 
\end{exmp}

\begin{exmp}\label{Exmp:2ConvexisSmallerClass}
Let $\Phi:[0,\infty)\rightarrow [0,\infty)$ be defined by
\begin{align*}
 \Phi(t)
 =\begin{cases}
     \frac{1}{2} t^2, \qquad&\text{ if } 0\leq t\leq 1;\\
     2t-\frac{3}{2}\qquad&\text{ if } 1\leq t\leq \frac{3}{2};\\
     t^2-\frac{3}{4}\qquad&\text{ if }  t\geq \frac{3}{2}.
 \end{cases}
\end{align*}
Then, it is easy to see that $\Phi$ is an Orlicz function
which satisfies $\Delta_2$-condition. Moreover, $\Phi$ satisfies the condition $(i)$ in Lemma \ref{Lem:Ratio} for $p=2$, but $\Phi$ is not $2$-convex.
\end{exmp}

\subsection{Noncommutative Rota's Theorem}
We briefly recall the setting of the noncommutative version of Rota's theorem, referring to \cite{An06, Bi24} for the relevant definitions and terminology. Let $\M$ be a finite von Neumann algebra equipped with a faithful normal tracial state $\tau$, and let $T:\M\rightarrow\M$ be a factorizable $\tau$-preserving Markov operator. Rota's theorem in this setting concerns the pointwise convergence of the sequence of iterates of $T$ and its adjoint. Anantharaman--Delaroche \cite{An06} established such a result on noncommutative $L_p$-spaces for $p>1$.

In the Orlicz-space setting, Hu \cite{Hu09} established Rota's theorem on $L\log^\alpha L$ for $\alpha\geq 2$, using a maximal inequality for noncommutative martingales from $L\log^\alpha L$ into $L_1$. Moreover, the exponent $\alpha=2$ is optimal for this maximal inequality \cite{Hu09}. Most recently, Bikram \cite{Bi24} extended Rota's theorem to a class of noncommutative Orlicz spaces that includes $L\log L(\M)$. His approach is based on the $L_p$-space result of Anantharaman-Delaroche \cite{An06} and \cite{Bu02}, together with a result of Litvinov \cite{Li12}.

We now recall the noncommutative Rota theorem established in \cite{Bi24}.

\begin{thm}[Noncommutative Rota's theorem {\cite[Theorem 1.6]{Bi24}}]\label{Thm:RotaTheoremPanchuDa}
Let $\M$ be a von Neumann algebra equipped with a faithful normal tracial state $\tau$. Let
$T:(\M,\tau)\longrightarrow(\M,\tau)$ be a factorizable $\tau$-preserving Markov operator. Let $\Phi:[0,\infty)\rightarrow[0,\infty)$ be an Orlicz function such that the function
\begin{align*}
t\longmapsto [\Phi(t)]^{1/p},\qquad t\geq 0,    
\end{align*}
is convex for some $p>1$. Then, for every $x\in L_\Phi(\M,\tau)$, the sequence
$\big\{T^n(T^*)^n(x)\big\}_{n\geq 0}$ converges $b.a.u.$. Furthermore, if the function
$\widetilde{\Phi}(t)=\Phi(\sqrt{t})$, $t\geq 0$, 
such that
\begin{align*}
 t\mapsto [\widetilde{\Phi}(t)]^{1/p}   
\end{align*}
is convex, then, for every $x\in L_\Phi(\M,\tau)$, the sequence $\big\{T^n(T^*)^n(x)\big\}_{n\geq 0}$ converges $a.u.$
In particular, $\{T^n(T^*)^n(x)\}_{n\geq 0}$ converges $b.a.u.$ for every
$x\in L\log L(\M)$.
\end{thm}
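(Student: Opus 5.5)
We plan to reduce the statement to Anantharaman-Delaroche's $L_p$-theorem \cite{An06} through a continuous embedding of $L_\Phi(\M)$ into a noncommutative $L_p$-space. This embedding is available because $\tau$ is a finite state. The first step converts the hypothesis into the form used in Section \ref{Sec:pConvexOrliczSpace}. By Proposition \ref{Prop:BikramConvexImplyOurConvex}, convexity of $t\mapsto[\Phi(t)]^{1/p}$ implies convexity of $t\mapsto\Phi(t^{1/p})$. Hence $\Phi$ is $p$-convex in the sense of Definition \ref{Defn:pconvex}, and Lemma \ref{Lem:Ratio}$(i)$ shows that $s\mapsto \Phi(s)/s^p$ is nondecreasing on $(0,\infty)$.

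The key step is the inclusion $L_\Phi(\M)\subseteq L_p(\M)$. Let $x\in L_\Phi(\M)$ and choose $\lambda>0$ with $\tau(\Phi(\lambda|x|))<\infty$. Monotonicity of $\Phi(s)/s^p$ gives
\begin{align*}
s^p\leq \Phi(1)^{-1}\Phi(s)\quad (s\geq 1),\qquad s^p\leq 1\quad (0\leq s\leq 1).
\end{align*}
Since $\Phi(1)>0$, functional calculus applied to $\lambda|x|$ yields
\begin{align*}
\tau\bigl((\lambda|x|)^p\bigr)\leq \tau(\mathbf 1)+\Phi(1)^{-1}\tau\bigl(\Phi(\lambda|x|)\bigr)<\infty.
\end{align*}
Here $\tau(\mathbf 1)=1$ because $\tau$ is a tracial state. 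Thus $x\in L_p(\M)$ with $p>1$. Anantharaman-Delaroche's theorem \cite{An06} (with the maximal inequality of \cite{Bu02}) states that $\{T^n(T^*)^n x\}_{n\geq 0}$ converges $b.a.u.$ for every $x\in L_p(\M)$, $p>1$, and a factorizable $\tau$-preserving Markov $T$. This gives the first assertion.

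For the $a.u.$ assertion we argue the same way with $\widetilde\Phi(t)=\Phi(\sqrt t)$. The hypothesis and Proposition \ref{Prop:BikramConvexImplyOurConvex} show that $\widetilde\Phi$ is $p$-convex. By Lemma \ref{Lem:Ratio}$(i)$, $\widetilde\Phi(t)/t^p=\Phi(\sqrt t)/t^p$ is nondecreasing, and substituting $t=s^2$ shows that $\Phi(s)/s^{2p}$ is nondecreasing. The computation above, with $2p$ in place of $p$, then gives $L_\Phi(\M)\subseteq L_{2p}(\M)$ with $2p>2$. Almost uniform convergence of $T^n(T^*)^n x$ for $x\in L_r(\M)$, $r>2$, is the column-type part of \cite{An06}. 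It comes from the column maximal inequality $\|(T^n(T^*)^n x)_n\|_{L_r(\M;\ell_\infty^c)}\lesssim\|x\|_r$, which holds because $|y|^2\in L_{r/2}$ with $r/2>1$.

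The main obstacle is verifying that \cite{An06} supplies exactly these two convergence statements in the present factorizable Markov setting, in particular the $a.u.$ version for $r>2$. If only the maximal inequalities are available, we would fall back on the route of Remark \ref{Rem:RotaTheorem}. That route combines Proposition \ref{Prop:ToMakeSmallElementInOrliczSpace} with the weak $(p,p)$ maximal inequality to obtain $b.u.e.m.$ (resp. $u.e.m.$) at $0$ on $L_\Phi(\M)$. Convergence on the dense set $\M$ then extends to all of $L_\Phi(\M)$ by Litvinov's Banach principle \cite{Li12}, in the same way as in the proof of Theorem \ref{Thm:AUconvergenceInOrliczSpace}. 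In this fallback, norm density of $\M$ requires the $\Delta_2$-condition via Lemma \ref{Lem:MIntersecL1DenseInOrliczspace}, so the embedding argument above is preferable.
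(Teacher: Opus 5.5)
For the first two assertions your argument is correct, and it follows a genuinely different and much shorter route than the one the paper indicates. The paper, following \cite{Bi24}, uses the splitting $x\le x_\delta+t\Phi^{1/p}(x)$ (Proposition \ref{Prop:ToMakeSmallElementInOrliczSpace}). It then applies the $L_p$ maximal inequality of \cite{An06,Bu02} to get $b.u.e.m.$ (resp. $u.e.m.$) at $0$ on $L_\Phi(\M)$, and concludes with Litvinov's Banach principle \cite{Li12}. You instead observe that, because $\tau(\mathbf 1)=1$, the hypothesis already forces $\Phi(s)\ge \Phi(1)s^p$ for $s\ge 1$. Hence $L_\Phi(\M)\subseteq L_p(\M)$, or $L_{2p}(\M)$ for the $a.u.$ part, and the $L_p$ theorem applies directly. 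No Banach principle is needed, and no density or $\Delta_2$ issue arises. Since you only use that $\Phi(s)/s^p$ is nondecreasing (Lemma \ref{Lem:Ratio}$(i)$), the same two lines also prove the paper's Theorem \ref{Thm:RotaTheoremInourSetting} immediately. So under a finite trace these $p$-convex Orlicz extensions follow directly from the $L_p$ case. The Banach-principle machinery only pays off when no such embedding is available, for example when the trace is infinite.

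The gap is the last sentence of the statement, the claim about $L\log L(\M)$. Your proposal never addresses it, and your method cannot reach it. In fact $\Phi(t)=t\log(1+t)$ satisfies the hypothesis for no $p>1$. If $f=\Phi^{1/p}$ were convex, then $f(0)=0$ would make $f(t)/t$ nondecreasing. But $f(t)/t=t^{1/p-1}(\log(1+t))^{1/p}\to 0$ as $t\to\infty$. Equivalently, your own embedding shows that every admissible $\Phi$ gives $L_\Phi(\M)\subseteq L_p(\M)$ with $p>1$. Yet $L\log L(\M)\not\subseteq L_p(\M)$ for any $p>1$ once $\M$ is diffuse. So this clause is not a special case of the main assertion. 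Your fallback through the $p$-convex splitting fails for the same reason: it needs $u\le t\Phi^{1/p}(u)$ for large $u$. Reaching $L\log L$ would require an independent maximal estimate on $L\log L$. The paper itself recalls that the Doob-type estimate into $L_1$ used by Hu requires $L\log^2 L$ \cite{Hu09}. You should either give a separate argument for that sentence or state explicitly that it does not follow from the hypothesis as formulated.
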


Following the approach of \cite{Bi24}, we use our $p$-convexity results to extend the class of noncommutative Orlicz spaces for which Rota's theorem holds. We now present the proof of Theorem \ref{Thm:RotaTheoremInourSetting}. The key ingredient is Proposition \ref{Prop:ToMakeSmallElementInOrliczSpace}
which provides the spectral decomposition estimate corresponding to \cite[Eq.~(7), Lemma 3.9]{Bi24} under our $p$-convexity assumption. This allows us to adapt the argument of \cite{Bi24} to the present setting.

\begin{proof}[\textbf{Proof of Theorem \ref{Thm:RotaTheoremInourSetting}}]
In \cite[Theorem 1.6; see also Theorem 3.10]{Bi24}, the author proves  
Rota's theorem in noncommutative Orlicz spaces using \cite[Lemma 3.9]{Bi24}. 
Note that the proof of \cite[Theorem 1.6]{Bi24} relies 
on \cite[Eq.~(7), Lemma 3.9]{Bi24}. In our setting, the corresponding estimate
is provided by Proposition \ref{Prop:ToMakeSmallElementInOrliczSpace}.
More precisely, Proposition \ref{Prop:ToMakeSmallElementInOrliczSpace} 
can be used to establish \cite[Eq.~(7), Lemma 3.9]{Bi24} under our $p$-convexity
assumption. With this modification, the rest of the proof will 
follow as in \cite[Theorem 1.6; see also Theorem 3.10]{Bi24}.
\end{proof}

\begin{rem}
The noncommutative Bufetov ergodic theorem \cite[Theorem 1.4; see also Theorem 4.4]{Bi24} also remains valid under our $p$-convexity assumption. Indeed, the proof in \cite{Bi24} relies on Theorem
\ref{Thm:RotaTheoremPanchuDa}. Replacing this theorem with Theorem \ref{Thm:RotaTheoremInourSetting}, the same argument yields the corresponding result on the noncommutative Bufetov ergodic theorem for the $p$-convex Orlicz functions considered in this paper.
\end{rem}

\section{Weighted Classes and Besicovitch weight}\label{Sec:TechLemma}

\subsection{Weighted Classes}
In this section, we define $\M$-valued weighted classes for $(C,\alpha)$-ergodic averages and discuss their properties. 
We begin with the following definition. 

\begin{defn}[Weight Classes]\label{Defn:WelytypeSpace}
Let $\alpha>0$ and $1\leq q\leq \infty$. For $\mathbf{b}=\{b_k\}_{k\geq 0}\subseteq \M$, we 
will write
\begin{align*}
\norm{\mathbf{b}}_{W_q^\alpha(\M)}
=\begin{cases}
&\left(\limsup_{n\rightarrow\infty} \frac{1}{A_n^\alpha}\sum_{k=0}^n\, A_{n-k}^{\alpha-1} \, \norm{b_k}_\infty^q \right)^\frac{1}{q},\qquad\text{ if }1\leq q<\infty;\\
&\sup_{k\geq 0}\norm{b_k}_\infty, \qquad\qquad\qquad\qquad\qquad\qquad\text{ if }q=\infty.
\end{cases}
\end{align*}
Then we define $\M$-valued $W_{q}^\alpha$-space as
\begin{align*}
W_{q}^\alpha(\M)
=\left\{\mathbf{b}=\{b_k\}_{k\geq 0}\subseteq \M:\norm{\mathbf{b}}_{W_q^\alpha(\M)}
<\infty\right\}.   
\end{align*}
We write $\mathbf{1}=\{1\}_{k\geq 0}$ to denote the sequence consisting of identity operators. When $\M=\mathbb{C}$, we simply write $W_{q}^\alpha$ instead of $W_{q}^\alpha(\mathbb{C})$.
\end{defn}
\noindent It is easy to see that 
\begin{align*}
\mathbf{b}\in W_{q}^\alpha(\M) \quad \text{ if and only if }\quad    \mathbf{w}:=\{\norm{b_k}_{\infty}\}_{k\geq 0}\in W_q^\alpha. 
\end{align*}
On the other hand, if $\mathbf{w}=\{w_k\}_{k \geq 0} \in W_q^{\alpha}$, then, defining $b_k = w_k 1$, we have $\mathbf{b}=\{b_k\}_{k \geq 0} \in W_q^{\alpha}(\mathcal{M})$. Therefore, 
$W_q^\alpha$ is naturally embedded in $W_q^\alpha(\M)$.

Note that $W_{\infty}^\alpha(\M)
=\ell_\infty(\M)$. By definition, it follows that $\norm{\cdot}_{W_\infty^\alpha(\M)}$ is a genuine norm on $W_{\infty}^\alpha(\M)$ and that $W_{\infty}^\alpha(\M)$ becomes a Banach space with respect to the norm $\norm{\cdot}_{W_\infty^\alpha(\M)}$.
For $1\leq q<\infty$, one can show that $W_{q}^\alpha(\M)$
is a vector space and that $\norm{\cdot}_{W_q^\alpha(\M)}$ is a semi-norm on $W_{q}^\alpha(\M)$. 
We show below that with respect to the topology generated by this semi-norm $\norm{\cdot}_{W_q^\alpha(\M)}$, the space $W_{q}^\alpha(\M)$ is complete. We start with the following elementary lemma.

\begin{lem}\label{Lem:bk=0implynorm0}
For $\alpha>0$, let $\mathbf{b}=\{b_k\}_{k\geq 0}\in W_q^\alpha(\M)$ such that $b_k=0$ for all $k\geq N$, for some $N$. Then $\norm{b}_{W_q^\alpha(\M)}=0$.  
\end{lem}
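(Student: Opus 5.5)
The plan is to estimate the averages directly from the definition of the seminorm. If $q=\infty$, the seminorm is $\sup_k\norm{b_k}_\infty$, and the statement makes sense only for $1\le q<\infty$. So I assume $1\leq q<\infty$. Since $b_k=0$ for $k\geq N$, every term with $k\geq N$ drops out, and for $n\geq N$ we have
\begin{align*}
\frac{1}{A_n^\alpha}\sum_{k=0}^n A_{n-k}^{\alpha-1}\norm{b_k}_\infty^q=\frac{1}{A_n^\alpha}\sum_{k=0}^{N-1} A_{n-k}^{\alpha-1}\norm{b_k}_\infty^q\leq \frac{C_N}{A_n^\alpha}\max_{0\leq k\leq N-1}A_{n-k}^{\alpha-1},
\end{align*}
where $C_N=\sum_{k<N}\norm{b_k}_\infty^q$ is a fixed finite constant. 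It therefore suffices to show that $A_{n-k}^{\alpha-1}/A_n^\alpha\to 0$ as $n\to\infty$ for each fixed $k\in\{0,\dots,N-1\}$.

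For this I will use the asymptotics \eqref{Eq:CalphaCoeffient}. For $\alpha>0$ we have $\alpha-1>-1$, so the formula $A_m^{\alpha-1}\Gamma(\alpha)/m^{\alpha-1}\to1$ applies; for $\alpha=1$ we simply have $A_m^0=1$. Hence $A_{n-k}^{\alpha-1}\sim (n-k)^{\alpha-1}/\Gamma(\alpha)\sim n^{\alpha-1}/\Gamma(\alpha)$ for fixed $k$, while $A_n^\alpha\geq n^\alpha/\Gamma(\alpha+1)$ by \eqref{Eq:AnalphaInequality}. The ratio is therefore $O(1/n)$, which tends to $0$.

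Alternatively, I could avoid the asymptotics. The identity $A_m^{\alpha-1}=A_m^\alpha-A_{m-1}^\alpha$ gives $A_m^{\alpha-1}=A_m^\alpha\cdot\frac{\alpha}{m+\alpha}$, using the product formula $A_m^\alpha/A_{m-1}^\alpha=(m+\alpha)/m$. Combined with the monotonicity of $A_n^\alpha$ in $n$, this yields $A_{n-k}^{\alpha-1}/A_n^\alpha\le \alpha/(n-k+\alpha)\to0$ directly.

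Taking $\limsup_{n\to\infty}$, the bound gives that the $q$-th power of the seminorm is $0$, and hence $\norm{\mathbf b}_{W_q^\alpha(\M)}=0$. The only delicate point is the case $0<\alpha<1$, where $A^{\alpha-1}_m$ is decreasing rather than increasing. The explicit bound $\alpha/(n-k+\alpha)$ handles this case uniformly, so I expect no real obstacle.
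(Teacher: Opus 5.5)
Your proof is correct and follows the same route as the paper: restrict the sum to the finitely many indices $k<N$, then use that $A_{n-k}^{\alpha-1}/A_n^\alpha\to 0$ for each fixed $k$. The paper leaves this last limit unjustified, and your explicit bound $A_{n-k}^{\alpha-1}/A_n^\alpha\le \alpha/(n-k+\alpha)$ (obtained from $A_m^{\alpha-1}=\frac{\alpha}{m+\alpha}A_m^\alpha$ and the monotonicity of $A_n^\alpha$) fills that gap, as does your correct remark that the lemma only makes sense for $1\le q<\infty$.
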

\begin{proof}
For $n\geq N$, we have
\begin{align*}
\norm{b}_{W_q^\alpha(\M)}^q
=\limsup_{n\rightarrow\infty} \sum_{k=0}^N \frac{A_{n-k}^{\alpha-1}}{A_n^\alpha} \norm{b_k}_\infty^q
\leq \left(\sup_{0\leq k\leq N} \norm{b_k}_\infty^q\right) \sum_{k=0}^N \limsup_{n\rightarrow\infty}\frac{A_{n-k}^{\alpha-1}}{A_n^\alpha}
=0.
\end{align*}
This completes the proof.
\end{proof}

\begin{lem}\label{Lem:TruncationWithApproximation} Let $\alpha>0$ and $1\leq q<\infty$. 
For $m\geq 1$, let $\mathbf{b}^{(m)}\in W_q^\alpha(\M)$ be such that $\norm{\mathbf{b}^{(m)}}_{ W_q^\alpha(\M)}\leq \epsilon_m$. Then there exist $N_1<N_2<\cdots <N_m<\cdots$ with $N_m\geq m$ such that the element $\mathbf{d}^{(m)}=\{{d}^{(m)}_k\}_{k\geq 0}$ defined by
\begin{align*}
{d}^{(m)}_k
=\begin{cases}
b^{(m)}_k, \qquad\text{ if } k\geq N_m;\\ 
0, \qquad\quad \text{ if } k<N_m,  
\end{cases} 
\end{align*}
is in $W_q^\alpha(\M)$ and satisfying
\begin{align}\label{Eq:ApproximationByTrunction}
\norm{\mathbf{d}^{(m)} -\mathbf{b}^{(m)}}_{ W_q^\alpha(\M)}= 0 
\qquad\text{ and }\qquad  
\sup_{n\geq 0} \left(\frac{1}{A_n^\alpha}\sum_{k=0}^n A_{n-k}^{\alpha-1} \norm{d^{(m)}_k}_{\infty}^q\right)^\frac{1}{q} \leq 2\epsilon_m.   
\end{align}
\end{lem}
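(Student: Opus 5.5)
The idea is to choose each truncation index $N_m$ beyond the point where the $(C,\alpha)$-averages of $\norm{b^{(m)}_k}_\infty^q$ have settled below $(2\epsilon_m)^q$. Before truncation, only the $\limsup$ of these averages is controlled; after truncation, the early averages become $0$ and the late ones are bounded by the corresponding averages of $\mathbf{b}^{(m)}$. Throughout I assume $\epsilon_m>0$. This is the intended reading: if $\epsilon_m=0$ the supremum bound would force all late averages to vanish, which need not hold.

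\emph{First claim.} For each fixed $m$, the difference $\mathbf{d}^{(m)}-\mathbf{b}^{(m)}$ has entries $-b^{(m)}_k$ for $k<N_m$ and $0$ for $k\geq N_m$, so it vanishes eventually. Lemma \ref{Lem:bk=0implynorm0} then gives $\norm{\mathbf{d}^{(m)}-\mathbf{b}^{(m)}}_{W_q^\alpha(\M)}=0$, whatever $N_m$ is. In particular $\mathbf{d}^{(m)}-\mathbf{b}^{(m)}\in W_q^\alpha(\M)$. Since $W_q^\alpha(\M)$ is a vector space with seminorm $\norm{\cdot}_{W_q^\alpha(\M)}$, it follows that $\mathbf{d}^{(m)}\in W_q^\alpha(\M)$. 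Alternatively, this is seen directly: $\norm{d^{(m)}_k}_\infty\leq\norm{b^{(m)}_k}_\infty$, and the weights below are positive.

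\emph{Choice of $N_m$.} Set
\begin{align*}
a^{(m)}_n=\frac{1}{A_n^\alpha}\sum_{k=0}^n A_{n-k}^{\alpha-1}\norm{b^{(m)}_k}_\infty^q .
\end{align*}
By hypothesis $\limsup_n a^{(m)}_n\leq\epsilon_m^q<(2\epsilon_m)^q$. Hence there is $L_m$ with $a^{(m)}_n\leq(2\epsilon_m)^q$ for all $n\geq L_m$. Define $N_m$ inductively by $N_m:=\max\{L_m,\,m,\,N_{m-1}+1\}$, with $N_0:=0$. This gives $N_1<N_2<\cdots$ and $N_m\geq m$.

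\emph{Supremum bound.} Fix $n\geq 0$. If $n<N_m$, then every index $k\leq n$ satisfies $k<N_m$, so $d^{(m)}_k=0$ and the $\mathbf{d}^{(m)}$-average is $0$. If $n\geq N_m$, then $n\geq L_m$. Since $\alpha-1>-1$, (\ref{Eq:CalphaCoeffient}) gives $A_{n-k}^{\alpha-1}>0$, and $A_n^\alpha>0$, so the average is a positive combination of its terms. Dropping the terms with $k<N_m$ therefore only decreases it:
\begin{align*}
\frac{1}{A_n^\alpha}\sum_{k=0}^n A_{n-k}^{\alpha-1}\norm{d^{(m)}_k}_\infty^q\leq a^{(m)}_n\leq(2\epsilon_m)^q .
\end{align*}
Taking $q$-th roots and the supremum over $n$ yields (\ref{Eq:ApproximationByTrunction}).

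\emph{Main subtlety.} The argument is elementary; the only points needing care are the positivity of the coefficients $A_{n-k}^{\alpha-1}$ for $\alpha>0$, which makes truncation monotone, and the strict gap between $\epsilon_m^q$ and $(2\epsilon_m)^q$, which converts the $\limsup$ bound into a uniform bound for all sufficiently large $n$.
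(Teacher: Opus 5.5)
Your proposal is correct and follows essentially the same route as the paper: both pick an index beyond which the $(C,\alpha)$-averages of $\norm{b^{(m)}_k}_\infty^q$ stay below $(2\epsilon_m)^q$, use Lemma \ref{Lem:bk=0implynorm0} for the finitely supported difference, and bound the truncated averages by positivity of $A_{n-k}^{\alpha-1}$. The only differences are cosmetic: your inductive choice $N_m=\max\{L_m,m,N_{m-1}+1\}$ replaces the paper's $N_m=\max\{l_1,\ldots,l_m\}+m$, and you make explicit the assumption $\epsilon_m>0$, which the paper uses tacitly and which holds in its application to Theorem \ref{Thm:WqAlphaComplete}.
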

\begin{proof}
Since $ \left(\limsup_{n\rightarrow \infty}\frac{1}{A_n^\alpha}\sum_{k=0}^n A_{n-k}^{\alpha-1} \norm{b^{(m)}_k}_{\infty}^q\right)^\frac{1}{q} \leq \epsilon_m$, there exists $l_m\in \mathbb{N}$ such that  
\begin{align}\label{Eq:LimsupDominant}
\left(\frac{1}{A_n^\alpha}\sum_{k=0}^n A_{n-k}^{\alpha-1} \norm{b^{(m)}_k}_{\infty}^q\right)^\frac{1}{q} \leq 2\epsilon_m , \qquad 
\forall n\geq l_m.   
\end{align}
Set $N_m=\max\{l_1,\ldots, l_m\}+m$. Then, $N_1<N_2<\cdots <N_m<\cdots$ with $N_m\geq m$.  Let $\mathbf{d}^{(m)}=\{{d}^{(m)}_k\}_{k\geq 0}$ defined by
\begin{align*}
{d}^{(m)}_k
=\begin{cases}
b^{(m)}_k, \qquad\text{ if } k\geq N_m;\\ 
0, \qquad\quad \text{ if } k<N_m.  
\end{cases} 
\end{align*}
Note that ${d}_k^{(m)} -{b}_k^{(m)}=0$ if $k\geq N_m$, that is, $\mathbf{d}^{(m)} -\mathbf{b}^{(m)}$ has finite support.
Thus, by Lemma \ref{Lem:bk=0implynorm0}, we have
\begin{align*}
\norm{\mathbf{d}^{(m)} -\mathbf{b}^{(m)}}_{ W_q^\alpha(\M)}= 0.  
\end{align*}
Now if $n< N_m$, then $d_{k}^{(m)}=0$, for all $0\leq k\leq n$ and hence, $\left(\frac{1}{A_n^\alpha}\sum_{k=0}^n A_{n-k}^{\alpha-1} \norm{d^{(m)}_k}_{\infty}^q\right)^\frac{1}{q}=0$. On the other hand, if $n\geq  N_m$, then $n> l_m$, and by Eq. \eqref{Eq:LimsupDominant}, it follows that
\begin{align*}
\left(\frac{1}{A_n^\alpha}\sum_{k=0}^n A_{n-k}^{\alpha-1} \norm{d^{(m)}_k}_{\infty}^q\right)^\frac{1}{q}
\leq 
\left(\frac{1}{A_n^\alpha}\sum_{k=0}^n A_{n-k}^{\alpha-1} \norm{b^{(m)}_k}_{\infty}^q\right)^\frac{1}{q} \leq 2\epsilon_m.   
\end{align*}
Thus, Eq. \eqref{Eq:ApproximationByTrunction} holds. This completes the proof.
\end{proof}

\begin{thm}\label{Thm:WqAlphaComplete}
Let $\alpha>0$ and $1\leq q\leq \infty$. Then, the space $W_q^\alpha(\M)$ is complete with respect to the topology generated by the semi-norm $\norm{\cdot}_{W_q^\alpha(\M)}$.     
\end{thm}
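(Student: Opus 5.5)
The case $q=\infty$ is immediate, since $W_\infty^\alpha(\M)=\ell_\infty(\M)$ is a Banach space. So assume $1\leq q<\infty$. Because the topology comes from a seminorm, it suffices to show that every absolutely summable series converges. Concretely, let $\{\mathbf{b}^{(m)}\}_{m\geq 1}\subseteq W_q^\alpha(\M)$ satisfy $\norm{\mathbf{b}^{(m)}}_{W_q^\alpha(\M)}\leq \epsilon_m:=2^{-m}$. We must find $\mathbf{b}\in W_q^\alpha(\M)$ with $\norm{\mathbf{b}-\sum_{m=1}^M\mathbf{b}^{(m)}}_{W_q^\alpha(\M)}\to 0$. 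Equivalently, one can pass from a Cauchy sequence to a subsequence whose consecutive differences have seminorm at most $2^{-m}$.

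The first step is to apply Lemma \ref{Lem:TruncationWithApproximation} to the $\mathbf{b}^{(m)}$. This produces $N_1<N_2<\cdots$ with $N_m\geq m$ and truncations $\mathbf{d}^{(m)}$. Each $\mathbf{d}^{(m)}$ differs from $\mathbf{b}^{(m)}$ by a sequence of seminorm zero, and satisfies the uniform bound
\begin{align*}
\sup_{n\geq 0}\left(\frac{1}{A_n^\alpha}\sum_{k=0}^n A_{n-k}^{\alpha-1}\norm{d_k^{(m)}}_\infty^q\right)^{1/q}\leq 2\epsilon_m .
\end{align*}
Since $d^{(m)}_k=0$ for $k<N_m$ and $N_m\to\infty$, for each fixed $k$ only finitely many $d_k^{(m)}$ are nonzero. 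We can therefore define $b_k:=\sum_{m\geq 1}d_k^{(m)}\in\M$, a finite sum for each $k$.

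The key estimate uses the quantity
\begin{align*}
\rho_n(\mathbf{c}):=\left(\frac{1}{A_n^\alpha}\sum_{k=0}^n A_{n-k}^{\alpha-1}\norm{c_k}_\infty^q\right)^{1/q}.
\end{align*}
For each fixed $n$, $\rho_n$ is a seminorm: it is a weighted $\ell_q$-norm of the scalars $\norm{c_k}_\infty$, and the weights $A_{n-k}^{\alpha-1}$ are positive because $\alpha>0$. Minkowski's inequality then gives
\begin{align*}
\rho_n\Big(\mathbf{b}-\sum_{m=1}^M\mathbf{d}^{(m)}\Big)=\rho_n\Big(\sum_{m>M}\mathbf{d}^{(m)}\Big)\leq \sum_{m>M}\rho_n(\mathbf{d}^{(m)})\leq 2\sum_{m>M}\epsilon_m=2^{1-M}.
\end{align*}
The sum over $m$ is finite for each $n$ and $k\leq n$, so no convergence issue arises. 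The bound is uniform in $n$, so taking $\limsup_n$ yields $\norm{\mathbf{b}-\sum_{m\leq M}\mathbf{d}^{(m)}}_{W_q^\alpha(\M)}\leq 2^{1-M}$. In particular, $\mathbf{b}\in W_q^\alpha(\M)$, because $\mathbf{b}$ is a finite sum of elements of $W_q^\alpha(\M)$ plus a sequence with finite seminorm.

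Finally, by the triangle inequality for the seminorm and the first property in \eqref{Eq:ApproximationByTrunction},
\begin{align*}
\norm{\mathbf{b}-\sum_{m\leq M}\mathbf{b}^{(m)}}_{W_q^\alpha(\M)}\leq 2^{1-M}+\sum_{m\leq M}\norm{\mathbf{d}^{(m)}-\mathbf{b}^{(m)}}_{W_q^\alpha(\M)}=2^{1-M}\to 0,
\end{align*}
which proves completeness.

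The main obstacle is that a $\limsup$-seminorm does not commute with infinite sums. The truncation lemma is what converts the $\limsup$ bounds into $\sup_n$ bounds, and doing so is essential for the Minkowski step. The only other point to check is that the reduction from Cauchy sequences to absolutely summable series is valid for seminormed spaces; it is, by the standard argument via subsequences.
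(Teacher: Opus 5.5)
Your proof is correct and follows essentially the same route as the paper. Both reduce to series with $\norm{\mathbf{b}^{(m)}}_{W_q^\alpha(\M)}\leq\epsilon_m$, use Lemma \ref{Lem:TruncationWithApproximation} to turn the $\limsup$ bounds into uniform-in-$n$ bounds, define $b_k$ as a coordinatewise finite sum, and apply Minkowski for each fixed $n$ before taking $\limsup$; your explicit final triangle-inequality step, passing from $\sum_m\mathbf{d}^{(m)}$ to $\sum_m\mathbf{b}^{(m)}$ via the seminorm-zero differences, makes precise a point the paper leaves implicit.
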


\begin{proof}
Note that the proof for $q=\infty$ is trivial. In fact, $\norm{\cdot}_{W_\infty^\alpha(\M)}$ is a genuine norm and 
$W_\infty^\alpha(\M)$ is a Banach space. Thus, we may assume that $1\leq q<\infty$. 

It is enough to show that $\sum_{m\geq 1}\mathbf{b}^{(m)}$ converges in $W_q^\alpha(\M)$ whenever $\norm{\mathbf{b}^{(m)}}_{W_q^\alpha(\M)}\leq \epsilon_m$, for all $m\geq 1$, with $\sum_{m\geq 1}\epsilon_m<\infty$. 

\noindent Let $N_m$ and $\mathbf{d}^{(m)}$ be as in Lemma \ref{Lem:TruncationWithApproximation}. Fix $k\geq 1$. Note that if $d_k^{(m)}\neq 0$, then $k\geq N_m\geq m$. So 
\begin{align}\label{Eq:NewElemenetasLimit}
b_k=\sum_{m\geq 1} d_k^{(m)}=\sum_{m=1}^k d_k^{(m)}    
\end{align}
is a finite sum in $\M$ and hence $\mathbf{b}=\{b_k\}_{k\geq 0}\subseteq \M$ is well-defined. We show that $\mathbf{b}\in W_q^\alpha(\M)$ and 
\begin{align*}
\norm{\mathbf{b}-\sum_{m=1}^{l}\mathbf{b}^{(m)}}_{W_{q}^\alpha(\M)}\rightarrow 0 \qquad\text{ as }\quad l\rightarrow\infty.
\end{align*}
Fix $l\in\mathbb{N}$ and $n\in\mathbb{N}$ with $l\leq n$. 
Note that if $k\geq l$, we have
\begin{align*}
[b-\sum_{m=1}^{l} d^{(m)}]_{k}
=&b_k-\sum_{m=1}^{l} d_k^{(m)}
\underset{(\text{Eq. }\eqref{Eq:NewElemenetasLimit})}{=}\sum_{m=1}^{k} d_k^{(m)}-\sum_{m=1}^{l} d_k^{(m)}\\
=&\sum_{m=l+1}^{k} d_k^{(m)}
\underset{( d_k^{(m)}=0,\forall\,m\geq k)}{=}\sum_{m=l+1}^{n} d_k^{(m)}
=[\sum_{m=l+1}^{n} \mathbf{d}^{(m)}]_{k}.
\end{align*}
On the other hand, $d_k^{(m)}=0$, for all $m\geq k$. Now if $l>k$, then we have
\begin{align*}
[b-\sum_{m=1}^{l} d^{(m)}]_{k}
=b_k-\sum_{m=1}^{l} d_k^{(m)}
=\sum_{m=1}^{k} d_k^{(m)}-\sum_{m=1}^{l} d_k^{(m)}
=-\sum_{m=k+1}^{l} d_k^{(m)}=0=[\sum_{m=l+1}^{n} \mathbf{d}^{(m)}]_k
\end{align*}
In the above two cases, the coordinates with index
$k\leq n$ of $\mathbf{b}-\sum_{m=1}^{l} \mathbf{d}^{(m)}$ agree with those of the finite sum $\sum_{m=l}^{n} \mathbf{d}^{(m)}$. Thus, by the Minkowski inequality, we have
\begin{align}\label{Eq:WithOutLimitTail}
\left( \frac{1}{A_n^\alpha}\sum_{k=0}^n A_{n-k}^{\alpha-1} \norm{b_k-\sum_{m=1}^{l}d^{(m)}_k}_{\infty}^q\right)^\frac{1}{q}
=&\left( \frac{1}{A_n^\alpha}\sum_{k=0}^n A_{n-k}^{\alpha-1} \norm{\sum_{m=l+1}^{n}d^{(m)}_k}_{\infty}^q\right)^\frac{1}{q}\\
\nonumber\leq &\left( \frac{1}{A_n^\alpha}\sum_{k=0}^n A_{n-k}^{\alpha-1} \left(\sum_{m=l+1}^{n}\norm{d^{(m)}_k}_{\infty}\right)^q\right)^\frac{1}{q}\\
\nonumber\underset{(\text{Minkowski})}{\leq} &\sum_{m=l+1}^{n} \left( \frac{1}{A_n^\alpha}\sum_{k=0}^n A_{n-k}^{\alpha-1} \norm{d^{(m)}_k}_{\infty}^q\right)^\frac{1}{q}\\
\nonumber\underset{(\text{Eq. } \eqref{Eq:ApproximationByTrunction})}{\leq}& \sum_{m=l+1}^{n} 2\epsilon_m \\
\nonumber\leq& \sum_{m\geq l+1} 2\epsilon_m.  
\end{align}
Since the right-hand side of Eq. \eqref{Eq:WithOutLimitTail} is Cauchy, we have
\begin{align*}
\norm{\mathbf{b}-\sum_{m=1}^{l}\mathbf{b}^{(m)}}_{W_{q}^\alpha(\M)}
=&\left(\limsup_{n\rightarrow\infty} \frac{1}{A_n^\alpha}\sum_{k=0}^n A_{n-k}^{\alpha-1} \norm{b_k-\sum_{m=1}^{l}d^{(m)}_k}_{\infty}^q\right)^\frac{1}{q}\\
\leq &\limsup_{n\rightarrow\infty}\left( \frac{1}{A_n^\alpha}\sum_{k=0}^n A_{n-k}^{\alpha-1} \norm{b_k-\sum_{m=1}^{l}d^{(m)}_k}_{\infty}^q\right)^\frac{1}{q}\\
\leq& \sum_{m\geq l+1} 2\epsilon_m\longrightarrow 0 \text{ as }l\rightarrow\infty.
\end{align*}
Moreover,
\begin{align*}
\norm{\mathbf{b}}_{W_{q}^\alpha}    
\leq \norm{\mathbf{b}-\sum_{m=1}^{l}\mathbf{b}^{(m)}}_{W_{q}^\alpha}+\norm{\sum_{m=1}^{l}\mathbf{b}^{(m)}}_{W_{q}^\alpha}
\leq \norm{\mathbf{b}-\sum_{m=1}^{l}\mathbf{b}^{(m)}}_{W_{q}^\alpha}+\sum_{m=1}^{l}\norm{\mathbf{b}^{(m)}}_{W_{q}^\alpha}<\infty.
\end{align*}
This shows that $\mathbf{b}\in W_q^\alpha(\M)$. 
This completes the proof.
\end{proof}

\begin{rem}
\noindent $(i)$. In Eq. \eqref{Eq:NewElemenetasLimit}, every sum in $\M$ is finite. Therefore, Theorem \ref{Thm:WqAlphaComplete} holds with $\M$ replaced by any normed space with $\alpha>0$ and $1\leq q <\infty$; the only
property that we used in the proof is that $\norm{\cdot}_\infty$ is a norm. 

\noindent $(ii)$ We also have analogous properties for $W_q^\alpha(\M)$ to those of $W_q^\alpha$ described in Yoshimoto's paper \cite[Proposition 2]{Yo20}. Namely, $(1)$ for $\alpha>0$, we have $W_{q_2}^{\alpha}(\mathcal{M}) 
\subset W_{q_1}^{\alpha}(\mathcal{M})$, whenever $1\leq q_1<q_2\leq\infty$, and $(2)$
For $1\leq q\leq \infty$, we have $W_{q}^{\alpha_1}(\mathcal{M})\subset W_{q}^{\alpha_2}(\mathcal{M})$, whenever  
$0<\alpha_1<\alpha_2<\infty$.
The proofs are similar to those given by Yoshimoto \cite{Yo20}, and hence we omit the details.

\noindent $(iii)$
Let $\alpha>0$ and $1\leq q<\infty$. For $\mathbf{w} = \{w_k\}_{k \geq 1} \in W_q^{\alpha}$, denote
\begin{align*}
|\mathbf{w}|_q := \bigg(\sup_{n \geq 0}\frac{1}{A_n^{\alpha}}\sum_{k=0}^n A_{n-k}^{\alpha-1}\abs{w_k}^q \bigg)^{\frac{1}{q}}. 
\end{align*}
Then,  
\begin{align*}
\norm{\mathbf{w}}_{W_q^{\alpha}} < \infty \iff |\mathbf{w}|_q < \infty    
\end{align*}
We shall use these facts in proof of the Theorem \ref{Thm:buemAt0ForM_nbTx}.
\end{rem}

\subsection{Operator-Valued Besicovitch Weights}\label{Subsec:OpValuedBesicovitchWeight} 
In this section, we discuss on operator-valued Besicovitch weights. This definition can be found in \cite[Definition 5.1]{CL13}.

Let $\mathcal{U}(\M)$ denote the collection of all unitary operators in $\M$. For an operator $x\in \M$, we denote $\sigma(x)$ to be the spectrum of $x$. Denote 
\begin{align*}
    \mathcal{U}_f:=\left\{ u\in \mathcal{U}(\M): \sigma(u)\text{ is finite}\right\}. 
\end{align*}
Let $\mathcal{U}_0\subseteq \mathcal{U}(\M)$. An operator valued function $P:\mathbb{N}\rightarrow \M$ is said to be trigonometric polynomial over $\mathcal{U}_0$ if there exists some $m\in\mathbb{N}$ such that
\begin{align*}
P(k)
=\sum_{j=0}^{m} z_j u_{j}^k, \quad\quad\forall k\in\mathbb{N},
\end{align*}
where $z_j\in\mathbb{C}$ and $u_j\in\mathcal{U}_0$,  $0\leq j\leq m$.

\begin{defn}
Let $\mathcal{U}_0\subseteq \mathcal{U}(\M)$ . Given $\alpha>0$ and $1\leq q<\infty$, a sequence $\mathbf{b}=\{b_k\}_{k\geq 0}\in W_q^\alpha(\M)$ is said to be $\mathcal{U}_0(q,\alpha)$-Besicovitch, if for every $\varepsilon>0$, there exists a trigonometric polynomial $P_{\varepsilon}$ over $\mathcal{U}_0$ such that 
\begin{align*}
\left(\limsup_{n\rightarrow\infty}\frac{1}{A_{n}^\alpha}\sum_{k=0}^n A_{n-k}^{\alpha-1}\norm{b_k-P_\varepsilon(k)}^q_\infty\right)^\frac{1}{q}<\varepsilon.
\end{align*}
\end{defn}    
For $1\leq q< \infty$ and $\alpha>0$, we refer to the elements of $\mathcal{U}_0(q,\alpha)$ as $q$-Besicovitch sequences of order $\alpha$.
Note that a $\mathcal{U}_0(1,\alpha)$-Besicovitch sequence $\mathbf{b}=\{b_k\}_{k\geq 0}$ is said to be bounded if $\mathbf{b}\in W_{\infty}^\alpha(\M)\cap \mathcal{U}_0(1,\alpha)$, equivalently, ${\sup_k}\norm{b_k}_\infty<\infty$. When $\M=\mathbb{C}$, we 
simply write ${B}_q^\alpha$ instead of $\mathcal{U}_0(q,\alpha)$.

\section{Maximal Inequality and Mean Ergodic Theorem for Weighted Ergodic Averages}\label{Sec:MaximalInequalityWeightedErgodicAverages}
In this section, by using the results of \cite{HJZ26}, we prove the mean ergodic theorem and then establish the corresponding maximal inequality for weighted ergodic averages.

We begin with the following lemma.

\begin{lem}\label{Lem:Unitary}
Let $\alpha>0$. Let $\mathbf{u} = \{u^k\}_{k \geq 0}$ and $\mathbf{v} = \{v^k\}_{k \geq 0}$, where $u,v \in \mathcal{U}_f$. Then the following statements hold:
\begin{enumerate}[label=(\roman*)]
    \item If $T$ is a $DS$ operator on $\M$, then for every $x \in L_p(\M)$, $1<p<\infty$, the sequence $\{M_n(\mathbf{u}, \mathbf{v},\mathbf{1},T, \alpha)x\}_{n\geq 0}$ converges in the norm of $L_p(\M)$. Moreover, if $\M$ is finite, the same result holds for $p=1$. In the case $p=\infty$, when $\M$ is finite, the above convergence is with respect to the $w^*$-topology.
    \item If $T$ is a $DS^{+}$ operator on $\M$, then for every $x \in L_1(\M)\cap\M$, the sequence $\{M_n(\mathbf{u}, \mathbf{v},\mathbf{1},T, \alpha)x\}_{n\geq 0}$ converges $a.u.$.
\end{enumerate}
\end{lem}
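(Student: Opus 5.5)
The idea is to remove the unitary weights with a finite spectral decomposition and so land in the scalar-modulated setting of Hong, Jing and Zhang \cite{HJZ26}. Since $u,v\in\mathcal{U}_f$, we write
\begin{align*}
u=\sum_{i=1}^{r}\lambda_i p_i,\qquad v=\sum_{j=1}^{s}\mu_j q_j,
\end{align*}
where the $p_i$ (resp. $q_j$) are mutually orthogonal projections in $\M$ summing to $1$, and $|\lambda_i|=|\mu_j|=1$. Then $u^k=\sum_i\lambda_i^k p_i$ and $v^k=\sum_j \mu_j^k q_j$, so for every $x\in L_1(\M)+\M$ linearity of $T^k$ gives
\begin{align*}
M_n(\mathbf{u},\mathbf{v},\mathbf{1},T,\alpha)x=\sum_{i=1}^{r}\sum_{j=1}^{s}\frac{1}{A_n^\alpha}\sum_{k=0}^{n}A_{n-k}^{\alpha-1}(\lambda_i\mu_j)^k\,T^k(p_ixq_j).
\end{align*}
Each inner term is the scalar-modulated average $M_n(w^{(ij)},T,\alpha)$ from Eq. \eqref{Eq:ModulatedStandardAverages}, with weight $w^{(ij)}_k=\zeta_{ij}^k$, $\zeta_{ij}=\lambda_i\mu_j\in\mathbb{T}$, applied to $x_{ij}:=p_ixq_j$. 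Since $\norm{p_ixq_j}_p\le\norm{x}_p$ for all $p$, and $x_{ij}\in L_1(\M)\cap\M$ whenever $x$ is, each piece stays in the relevant space. Convergence of a finite sum of sequences (in norm, $w^*$, or a.u.) follows from convergence of each summand. For a.u. we take the projection $e=\bigwedge e_{ij}$ with $\tau(e^\perp)\le\sum\tau(e_{ij}^\perp)$ and use the triangle inequality for $\norm{(\cdot)e}_\infty$. So it suffices to treat a single unimodular modulation $w_k=\zeta^k$.

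For part (i), the sequence $\{\zeta^k\}$ is a trigonometric polynomial, so it lies in the Besicovitch class $B_q^\alpha$ for every $q$, and in particular it is bounded. We then invoke the mean ergodic theorem of \cite{HJZ26} for bounded Besicovitch weights. Alternatively we argue directly: $\zeta^kT^k=(\zeta T)^k$ and $\zeta T$ is again a $DS$ operator, so the average is the unweighted $(C,\alpha)$-mean of $S=\zeta T$. On a reflexive $L_p$, $1<p<\infty$, $(C,\alpha)$-means of a power-bounded operator converge in norm: they converge to the projection onto $\ker(I-S)$ along $\overline{\mathrm{ran}}(I-S)$. On $\ker(I-S)$ the average is identically $x$. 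On $(I-S)y$ the Abel-summation identity $\sum_k A_{n-k}^{\alpha-1}S^k(I-S)=\sum_k A_{n-k}^{\alpha-2}S^k-\dots$ gives a bound of order $\sum|A^{\alpha-2}_m|/A_n^\alpha\to0$. For finite $\M$ and $p=1$, we use weak compactness of the averages, which follows from uniform integrability since they are dominated by the averages of $|x|$. For $p=\infty$, we get $w^*$ convergence by duality with $L_1$ applied to the predual operator.

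For part (ii), the same reduction gives $\frac{1}{A_n^\alpha}\sum A^{\alpha-1}_{n-k}\zeta^kT^k x_{ij}$ with $x_{ij}\in L_1\cap\M$. When $T$ is positive, this is exactly the setting of the a.u. convergence theorem of \cite{HJZ26} for Besicovitch (here trigonometric) weights on $L_1(\M)\cap\M$, so we quote it. I expect this to be the main obstacle if one wanted to argue without citation: $\zeta T$ is not positive, so the Junge--Xu maximal inequalities do not apply to it directly. The route would be the standard one. First, use the $L_2$ (or $L_p$) norm convergence from (i) together with a dense set of the form $\ker(I-\zeta T)+(I-\zeta T)(L_1\cap\M)$, where a.u. convergence is immediate from the coboundary estimate in $\norm{\cdot}_\infty$. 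Then extend to all of $L_1\cap\M$ via the maximal inequality of \cite{HJZ26} for positive $T$ with bounded weights and the noncommutative Banach principle. If the citation from \cite{HJZ26} is available, the rest is routine.
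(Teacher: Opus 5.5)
Your proposal matches the paper's proof: you decompose $u,v$ spectrally, reduce to the scalar modulations $(\lambda_i\mu_j)^k$ acting on $P_ixQ_j$, invoke \cite[Theorem 1.1]{HJZ26} for norm and $w^*$ convergence and \cite[Theorem 1.3]{HJZ26} for $a.u.$ convergence, and then sum the finitely many pieces. Your optional direct arguments are not needed, but one step in them would fail: the domination $|\zeta^kT^kx|\le T^k|x|$ you suggest for $p=1$ is not available for a general (nonpositive, noncommutative) $DS$ operator, so for a self-contained $p=1$ argument you should instead approximate $x$ in $L_1(\M)$ by elements of $\M$ and use the $L_2$ case together with the uniform $L_1$-contractivity of the averages.
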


\begin{proof}
Since $u,v \in \mathcal{U}_f$, there exist $\lambda_i,\mu_j\in\mathbb{T}$ and mutually orthogonal projections $P_i, Q_j\in\mathcal{P}(\M)$ for $1\leq i\leq m$, $1\leq j\leq l$, such that
\begin{align*}
u = \sum_{i =1}^m \lambda_iP_i \quad\text{ and }\quad
v = \sum_{j =1}^l \mu_jQ_j.
\end{align*}
Then it follows that 
\begin{align*}
u^k = \sum_{i =1}^m \lambda_i^k P_i \quad\text{ and }\quad
v^k = \sum_{j =1}^l \mu_j^k Q_j.
\end{align*}
For each fixed $i,j$, set $w_k=(\lambda_i \mu_j)^k$, $k\geq 0$. Then, $\mathbf{w}=\{w_k\}_{k\geq 0}\in B_1^\alpha$.

\vspace{2mm}
\noindent $(i)$. Let $x \in L_p(\M)$ with $1<p<\infty$. Note that
\begin{align*}
M_n(\mathbf{u}, \mathbf{v},\mathbf{1},T, \alpha)x 
= \sum_{i =1}^m \sum_{j =1}^l \bigg( \frac{1}{A_n^\alpha} \sum_{k =0}^n A_{n-k}^{\alpha-1} (\lambda_i \mu_j)^k T^k(P_ixQ_j)\bigg).
\end{align*}
 Since $P_ixQ_j\in L_p(\M)$, by \cite[Theorem 1.1]{HJZ26}, the sequence 
\begin{align*}
\frac{1}{A_n^\alpha} \sum_{k =0}^n A_{n-k}^{\alpha-1}(\lambda_i \mu_j)^k T^k(P_ixQ_j)
\end{align*}
converges in the norm of $L_p(\M)$. Consequently, by linearity, the sequence $\{M_n(\mathbf{u}, \mathbf{v},\mathbf{1},T, \alpha)x\}_{n\geq 0}$ converges in the norm of $L_p(\M)$. 

Similar proofs hold for $p=1, \infty$ when $\M$ is finite using \cite[Theorem 1.1]{HJZ26}.

\vspace{2mm}
\noindent $(ii)$. Let $x \in L_1(\M)\cap\M$. Note that
\begin{align*}
M_n(\mathbf{u}, \mathbf{v},\mathbf{1},T, \alpha)x 
= \sum_{i =1}^m \sum_{j =1}^l \bigg( \frac{1}{A_n^\alpha} \sum_{k =0}^n A_{n-k}^{\alpha-1} (\lambda_i \mu_j)^k T^k(P_ixQ_j)\bigg).
\end{align*}
 Since $P_ixQ_j\in L_1(\M)\cap\M$, by \cite[Theorem 1.3]{HJZ26}, the sequence 
\begin{align*}
\frac{1}{A_n^\alpha} \sum_{k =0}^n A_{n-k}^{\alpha-1} (\lambda_i \mu_j)^k T^k(P_ixQ_j)
\end{align*}
converges $a.u.$. Consequently, by linearity, the sequence $\{M_n(\mathbf{u}, \mathbf{v},\mathbf{1},T, \alpha)x\}_{n\geq 0}$ converges 
$a.u.$. This completes the proof.
\end{proof}

\begin{lem}\label{Lem:ConvergenceofTrigonometricPolyNomial}
Let $\alpha>0$. Let $\psi_1,\psi_2$ be two  trigonometric polynomials over $\mathcal{U}_f$. Then the following statements hold:
\begin{enumerate}[label=(\roman*)]
\item If $T$ is a $DS$ operator on $\M$, then for every $x \in L_p(\M)$, $1<p<\infty$, the sequence $\{M_n(\psi_1, \psi_2,\mathbf{1},T, \alpha)x\}_{n\geq 0}$ converges in the norm of $L_p(\M)$. Moreover, if $\M$ is finite, the same result holds for $p=1$. In the case $p=\infty$, when $\M$ is finite, the above convergence is with respect to the $w^*$-topology.
\item If $T$ is a $DS^{+}$ operator on $\M$, then for every $x \in L_1(\M)\cap\M$, the sequence $\{M_n(\psi_1, \psi_2,\mathbf{1},T, \alpha)x\}_{n\geq 0}$ converges $a.u.$.
\end{enumerate}
\end{lem}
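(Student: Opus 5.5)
The plan is to reduce to Lemma \ref{Lem:Unitary} by bilinearity of the averages in the weights. Write
\begin{align*}
\psi_1(k)=\sum_{r=0}^{m_1} z_r u_r^k \qquad\text{and}\qquad \psi_2(k)=\sum_{s=0}^{m_2} \zeta_s v_s^k,
\end{align*}
with $z_r,\zeta_s\in\mathbb{C}$ and $u_r,v_s\in\mathcal{U}_f$. Since the map $(\mathbf{b},\mathbf{d})\mapsto M_n(\mathbf{b},\mathbf{d},\mathbf{1},T,\alpha)x$ is linear in each of $\mathbf{b}$ and $\mathbf{d}$ separately (because $T^k$ is linear and $A_{n-k}^{\alpha-1}$ is scalar), we obtain for every $n\geq 0$ the finite decomposition
\begin{align*}
M_n(\psi_1,\psi_2,\mathbf{1},T,\alpha)x=\sum_{r=0}^{m_1}\sum_{s=0}^{m_2} z_r\zeta_s\, M_n(\mathbf{u}_r,\mathbf{v}_s,\mathbf{1},T,\alpha)x,
\end{align*}
where $\mathbf{u}_r=\{u_r^k\}_{k\geq 0}$ and $\mathbf{v}_s=\{v_s^k\}_{k\geq 0}$. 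The coefficients are independent of $n$.

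For (i), apply Lemma \ref{Lem:Unitary}(i) to each pair $(\mathbf{u}_r,\mathbf{v}_s)$. Each summand then converges in $L_p(\M)$-norm for $1<p<\infty$; it also converges in $L_1$-norm, respectively $w^*$, when $\M$ is finite and $p=1$, respectively $p=\infty$. A finite linear combination of norm-convergent, respectively $w^*$-convergent, sequences converges in the same topology, because both topologies are vector space topologies.

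For (ii), use Lemma \ref{Lem:Unitary}(ii) for each summand with $x\in L_1(\M)\cap\M$. It remains to show that a finite linear combination of a.u.\ convergent sequences is a.u.\ convergent. Given $\varepsilon>0$ and $N=(m_1+1)(m_2+1)$ summands, choose for the $(r,s)$-th sequence a projection $e_{rs}$ with $\tau(e_{rs}^\perp)\leq\varepsilon/N$ along which it converges uniformly. Put $e=\bigwedge_{r,s}e_{rs}$. Then $\tau(e^\perp)\leq\sum\tau(e_{rs}^\perp)\leq\varepsilon$. Since $e\leq e_{rs}$, we have $\|(y_n-y)e\|_\infty=\|(y_n-y)e_{rs}e\|_\infty\leq\|(y_n-y)e_{rs}\|_\infty$. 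Hence each summand converges uniformly on $e$, and the triangle inequality gives a.u.\ convergence of the combination.

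There is no real obstacle here. The only point needing care is the a.u.\ closure under finite sums, namely the infimum-of-projections estimate $\tau(\bigvee e_{rs}^\perp)\leq\sum\tau(e_{rs}^\perp)$, which is standard.
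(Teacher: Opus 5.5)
Your proposal is correct and matches the paper's proof. Like the paper, you expand $\psi_1,\psi_2$ and use bilinearity of $M_n$ in the weights to write $M_n(\psi_1,\psi_2,\mathbf{1},T,\alpha)x$ as a finite linear combination of the averages $M_n(\mathbf{u}_r,\mathbf{v}_s,\mathbf{1},T,\alpha)x$, then invoke Lemma \ref{Lem:Unitary}. You also spell out, via $e=\bigwedge_{r,s} e_{rs}$, that a.u.\ convergence is preserved under finite linear combinations, which the paper leaves implicit.
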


\begin{proof}
Denote
\begin{align*}
\psi_1(\cdot) = \sum_{i = 0}^l z_i u_i^{(\cdot)}, \quad \psi_2(\cdot) = \sum_{j = 0}^m w_j v_j^{(\cdot)}, 
\end{align*}
where $u_i, v_j \in \mathcal{U}_f$ and $z_i, w_j \in \mathbb{C}$, $0\leq i\leq l$, $0\leq j\leq m$. Then, we have   
\begin{align*}
M_n(\psi_1, \psi_2,\mathbf{1},T, \alpha)x
= \sum_{i = 0}^l \sum_{j = 0}^m z_iw_j M_n(\mathbf{u}_i, \mathbf{v}_j,\mathbf{1},T, \alpha)x,
\end{align*}
where $\mathbf{u}_i=\{u_i^k\}_{k\geq 0}$ and $\mathbf{v}_j=\{v_j^k\}_{k\geq 0}$. The desired result now follows from Lemma \ref{Lem:Unitary}.
\end{proof}

Now we prove the mean ergodic theorem stated in Theorem \ref{Thm:ConvergenceInLpNormHongType} for weighted ergodic averages considered in Eq. \eqref{Eq:ModulatedWeightedAverage}.

\begin{proof}[\textbf{Proof of Theorem \ref{Thm:ConvergenceInLpNormHongType}}]
We consider three cases depending on $p$.

\vspace{2mm}
\noindent \textbf{Case I}:
Let $1< p<\infty$ and $x\in L_p(M)$. To show the sequence $\{M_n(\mathbf{b},\mathbf{d},\mathbf{1},T,\alpha)x\}_{n\geq 0}$ converges in $L_p(\mathcal{M})$-norm, it is enough to show that 
the sequence $\{M_n(\mathbf{b},\mathbf{d},\mathbf{1},\alpha,T)x\}_{n\geq 0}$ is Cauchy in the norm of $L_p(\M)$. 

Suppose $\mathbf{d}=\{d_k\}_{k\geq 0}$ is bounded.
Set $C=\sup_k\norm{d_k}_\infty<\infty$.
Let $\varepsilon>0$. Since $\mathbf{b}\in \mathcal{U}_f(1,\alpha)$,  there exists a trigonometric polynomial over $\mathcal{U}_f$, 
\begin{align*}
\psi_1(\cdot) = \sum_{i=1}^m z_i u_i^{(\cdot)},  
\end{align*}
where $u_i\in \mathcal{U}_f$ and $z_i\in \mathbb{C}$, $1\leq i\leq m$, such that
\begin{align*}
\limsup_{n\rightarrow\infty}\frac{1}{A_{n}^\alpha}\sum_{k=0}^n A_{n-k}^{\alpha-1}\norm{b_k-\psi_1(k)}_\infty<\frac{\varepsilon}{16\norm{x}_{p}C}.
\end{align*}
Denote $D:=\sum_{i=1}^m\abs{z_i}$. Then, note that
\begin{align*}
\sup_{k\geq 0}\norm{\psi_1(k)}_{\infty}
\leq \sum_{i=1}^{m}\abs{z_i}
= D.
\end{align*}
Again, since $\mathbf{d}\in \mathcal{U}_f(1,\alpha)$,  there exists a trigonometric polynomial over $\mathcal{U}_f$,
\begin{align*}
\psi_2(\cdot) = \sum_{j=1}^l w_j v_j^{(\cdot)},    
\end{align*}
where $v_j\in\mathcal{U}_f$ and $w_j\in \mathbb{C}$, $1\leq j\leq l$, such that
\begin{align*}
\limsup_{n\rightarrow\infty}\frac{1}{A_{n}^\alpha}\sum_{k=0}^n A_{n-k}^{\alpha-1}\norm{d_k-\psi_2(k)}_\infty<\frac{\varepsilon}{16\norm{x}_p D}.
\end{align*}
Thus, by definition of $\limsup$, there exists a natural number $n_0\in\mathbb{N}$ such that for all $n\geq n_0$, we have
\begin{align}\label{Eq:LimSupEstimate}
\frac{1}{A_{n}^\alpha}\sum_{k=0}^n A_{n-k}^{\alpha-1}\norm{b_k-\psi_1(k)}_\infty<\frac{\varepsilon}{8\norm{x}_{p}C}\text{ and }  \frac{1}{A_{n}^\alpha}\sum_{k=0}^n A_{n-k}^{\alpha-1}\norm{d_k-\psi_2(k)}_\infty<\frac{\varepsilon}{8\norm{x}_p D}.
\end{align}
Note that
\begin{align}\label{Eq:CauchyInNorm}
\norm{M_n(\mathbf{b},\mathbf{d},\mathbf{1},\alpha,T)x-M_m(\mathbf{b},\mathbf{d},\mathbf{1},\alpha,T)x}_p
\leq \mathrm{I}+\mathrm{II}+\mathrm{III}+\mathrm{IV}+\mathrm{V},
\end{align}
where
\begin{align*}
\mathrm{I}&=\norm{M_n(\mathbf{b},\mathbf{d},\mathbf{1},\alpha,T)x-M_n(\psi_1,\mathbf{d},\mathbf{1},\alpha,T)x}_p\\
\mathrm{II}&=\norm{M_n(\psi_1,\mathbf{d},\mathbf{1},\alpha,T)x-M_n(\psi_1,\psi_2,\mathbf{1},\alpha,T)x}_p\\
\mathrm{III}&=\norm{M_n(\psi_1,\psi_2,\mathbf{1},\alpha,T)x-M_m(\psi_1,\psi_2,\mathbf{1},\alpha,T)x}_p\\
\mathrm{IV}&=\norm{M_m(\psi_1,\psi_2,\mathbf{1},\alpha,T)x-M_m(\psi_1,\mathbf{d},\mathbf{1},\alpha,T)x}_p, \text{ and }\\
\mathrm{V}&=\norm{M_m(\psi_1,\mathbf{d},\mathbf{1},\alpha,T)x-M_m(\mathbf{b},\mathbf{d},\mathbf{1},\alpha,T)x}_p.
\end{align*}
Since $T$ is $DS$, for $n\geq n_0$, we have
\begin{align}\label{Eq:IstEstimate}
\mathrm{I}
\leq 
\frac{1}{A_n^{\alpha}}\sum_{k =0}^n A_{n-k}^{\alpha -1} \norm{T^k((b_k-\psi_1(k)) x d_k)}_p
\leq &\frac{1}{A_n^{\alpha}}\sum_{k =0}^n A_{n-k}^{\alpha -1} \norm{(b_k-\psi_1(k)) x d_k)}_p\\
\nonumber\leq  &\frac{1}{A_n^{\alpha}}\sum_{k =0}^n A_{n-k}^{\alpha -1} \norm{b_k-\psi_1(k)}_\infty\norm{x}_{p}\norm{ d_k}_\infty\\
\nonumber\leq &\left(\norm{x}_p C\right)\frac{1}{A_n^{\alpha}}\sum_{k =0}^n A_{n-k}^{\alpha -1} \norm{b_k-\psi_1(k)}_\infty\\
\nonumber\leq &\frac{\varepsilon}{8},
\end{align}
where the last inequality follows from Eq. \eqref{Eq:LimSupEstimate}. Similarly, since $T$ is $DS$, for all $n\geq n_0$, we have
\begin{align}\label{Eq:IIndEstimate}
 \mathrm{II}
 \leq 
\frac{1}{A_n^{\alpha}}\sum_{k =0}^n A_{n-k}^{\alpha -1} \norm{T^k(\psi_1(k)x(d_k-\psi_2(k)))}_p
\leq &\frac{1}{A_n^{\alpha}}\sum_{k =0}^n A_{n-k}^{\alpha -1} \norm{\psi_1(k)x(d_k-\psi_2(k))}_p\\
\nonumber\leq &\frac{1}{A_n^{\alpha}}\sum_{k =0}^n A_{n-k}^{\alpha -1} \norm{\psi_1(k)}_{\infty}\norm{x}_p\norm{d_k-\psi_2(k)}_\infty\\
\nonumber\leq &\left(D\norm{x}_p\right)\frac{1}{A_n^{\alpha}}\sum_{k =0}^n A_{n-k}^{\alpha -1} \norm{d_k-\psi_2(k)}_\infty\\
\nonumber\leq &\frac{\varepsilon}{8},
\end{align}
where the last inequality follows from Eq. \eqref{Eq:LimSupEstimate}. 
Again, by Lemma \ref{Lem:ConvergenceofTrigonometricPolyNomial}, there exists $n_1\in\mathbb{N}$, such that for all $m,n\geq n_1$, we have
\begin{align}\label{Eq:IIIrdEstimate}
\mathrm{III}
=\norm{M_n(\psi_1,\psi_2,\mathbf{1},\alpha,T)x-M_m(\psi_1,\psi_2,\mathbf{1},\alpha,T)x}_p
\leq \frac{\varepsilon}{8}.
\end{align}
Therefore, for all $m,n\geq n_2=\max(n_0,n_1)$, from Eqs. \eqref{Eq:CauchyInNorm}, \eqref{Eq:IstEstimate}, \eqref{Eq:IIndEstimate} and \eqref{Eq:IIIrdEstimate}, we have
\begin{align*}
\norm{M_n(\mathbf{b},\mathbf{d},\mathbf{1},\alpha,T)x-M_m(\mathbf{b},\mathbf{d},\mathbf{1},\alpha,T)x}_p
\leq \mathrm{I}+\mathrm{II}+\mathrm{III}+\mathrm{IV}+\mathrm{V}
\leq  \frac{5\varepsilon}{8}
<\varepsilon.
\end{align*}
Thus, the sequence $\{M_n(\mathbf{b},\mathbf{d},\mathbf{1},\alpha,T)x\}_{n\geq 0}$ is Cauchy in the norm of $L_p(\M)$. 

A similar argument applies when $\mathbf{b}$ is bounded. This completes the proof for $1<p<\infty$.

\vspace{2mm}
\noindent\textbf{Case II}: 
When $\M$ is finite and $p=1$, the result follows similarly to Case I by applying Lemma \ref{Lem:ConvergenceofTrigonometricPolyNomial}.

Now we deal with the case when $\M$ is finite and $p=\infty$. We know that $\M = (L_1(\M))^*$ and this duality is given by $\M\ni x\mapsto \tau_x\in (L_1(\M))^*$, where $\tau_x(y)=\tau(xy)$, $y\in L_1(\M)$.
Now given a $DS$ operator $T$, by considering $T$ as a map from $L_1(\M)$ to $L_1(\M)$, we can take the dual map $T^*:\M\rightarrow \M$ given by
\begin{align*}
\tau((T^*x)y)=\tau(x(Ty)),\qquad\forall x\in \M,\quad y\in L_1(\M).    
\end{align*}
It is easy to check that $T^*$ is also a $DS$ operator. 

Fix $x\in \M$. To show that $\{M_n(\mathbf{b},\mathbf{d},\mathbf{1},\alpha,T)x\}_{n\geq 0}$ converges in weak$^*$-topology, we need to prove that the sequence $\{\tau\left(y M_n(\mathbf{b},\mathbf{d},\mathbf{1},\alpha,T)x\right)\}_{n\geq 0}$ is convergent for all $y\in L_1(\M)$. Let $\varepsilon>0$ and $y\in L_1(\M)$. Since $\mathbf{b}, \mathbf{d}\in \mathcal{U}_f(1,\alpha)$,  there exist trigonometric polynomials $\psi_1,\psi_2$ over $\mathcal{U}_f$ such that 
\begin{align*}
\limsup_{n\rightarrow\infty}\frac{1}{A_{n}^\alpha}\sum_{k=0}^n A_{n-k}^{\alpha-1}\norm{b_k-\psi_1(k)}_\infty<\frac{\varepsilon}{16\norm{x}_{\infty}C\norm{y}_1},
\end{align*}
and
\begin{align*}
    \limsup_{n\rightarrow\infty}\frac{1}{A_{n}^\alpha}\sum_{k=0}^n A_{n-k}^{\alpha-1}\norm{d_k-\psi_2(k)}_\infty<\frac{\varepsilon}{16\norm{x}_{\infty}D\norm{y}_1},
\end{align*}
where $D$ is taken similar as in Case $\mathrm{I}$.
Similar to Eq. \eqref{Eq:LimSupEstimate}, there exists a natural number $n_0\in\mathbb{N}$ such that for all $n\geq n_0$, we have
\begin{align}\label{Eq:LimSupEstimate1}
\frac{1}{A_{n}^\alpha}\sum_{k=0}^n A_{n-k}^{\alpha-1}\norm{b_k-\psi_1(k)}_\infty
<\frac{\varepsilon}{8\norm{x}_{\infty}C\norm{y}_1}
\text{ and }  \frac{1}{A_{n}^\alpha}\sum_{k=0}^n A_{n-k}^{\alpha-1}\norm{d_k-\psi_2(k)}_\infty<\frac{\varepsilon}{8\norm{x}_{\infty}D\norm{y}_1}.
\end{align}
By Lemma \ref{Lem:ConvergenceofTrigonometricPolyNomial}, the sequence $\{M_n(\psi_1,\psi_2,\mathbf{1}, T,\alpha)x\}_{n\geq 0}$ converges in $w^*$-topology. Consequently, the sequence $\{\tau(yM_n(\psi_1,\psi_2,\mathbf{1},\alpha,T)x)\}_{n\geq 0}$ is Cauchy.
Now, for $n\geq n_0$, we have the following estimate:
\begin{align*}
   & \left|\tau(y M_n(\mathbf{b},\mathbf{d},\mathbf{1},\alpha,T)x) - \tau(yM_n(\psi_1,\psi_2,\mathbf{1},\alpha,T)x)\right| \\
 \leq&  \left|\tau(y M_n(\mathbf{b},\mathbf{d},\mathbf{1},\alpha,T)x) -\tau(y M_n(\psi_1,\mathbf{d},\mathbf{1},\alpha,T)x) \right| \\
  & \qquad \qquad +\left|\tau(y M_n(\psi_1,\mathbf{d},\mathbf{1},\alpha,T)x) -\tau(yM_n(\psi_1,\psi_2,\mathbf{1},\alpha,T)x)\right| \\
 =& \mathrm{I}+\mathrm{II},
\end{align*}
where
\begin{align*}
    \mathrm{I} &= \left|\tau(y M_n(\mathbf{b},\mathbf{d},\mathbf{1},\alpha,T)x) -\tau(y M_n(\psi_1,\mathbf{d},\mathbf{1},\alpha,T)x) \right| \\
     \leq& \frac{1}{A_n^{\alpha}}\sum_{k=0}^n A_{n-k}^{\alpha-1}\abs{\tau(yT^k((b_k-\psi_1(k)) xd_k))}\\
    =& \frac{1}{A_n^{\alpha}}\sum_{k=0}^n A_{n-k}^{\alpha-1}\abs{\tau((T^*)^k(y)(b_k-\psi_1(k)) xd_k)}\\
    \leq & \frac{1}{A_n^{\alpha}}\sum_{k=0}^n A_{n-k}^{\alpha-1}\norm{(T^*)^k(y)}_1\norm{b_k-\psi_1(k)}_\infty \norm{x}_\infty\norm{d_k}_\infty\\
   \leq & \left(\norm{x}_\infty C\norm{y}_1\right)\frac{1}{A_n^{\alpha}}\sum_{k=0}^n A_{n-k}^{\alpha-1}\norm{b_k-\psi_1(k)}_\infty\\
   \underset{(\text{Eq. \eqref{Eq:LimSupEstimate1}})}{<}& \frac{\varepsilon}{8}.
\end{align*}
Similarly, it can be shown that $\mathrm{II}< \frac{\varepsilon}{8}$. 
Therefore, $\{\tau(yM_n(\mathbf{b}, \mathbf{d},\mathbf{1},T,\alpha))\}_{n\geq 0}$ is Cauchy and hence convergent. This completes the proof. 
\end{proof}

We now prove the maximal inequality as given in Theorem \ref{Thm:MaximalInequalityForVectorWeightedSequenceSpace}, which is one of the main results of this paper.


\begin{proof}[\textbf{Proof of Theorem \ref{Thm:MaximalInequalityForVectorWeightedSequenceSpace}}]
Suppose $\mathbf{d}=\{d_k\}_{k\geq 0}$ is bounded.
To prove this result, it is enough to assume that $x\in L_p(M)_+$.

For $k\geq 0$, define 
\begin{align*}
w_k=\norm{b_k d_k}_\infty
\qquad\text{ and }\qquad 
\widetilde{b}_k
=\begin{cases}
\frac{b_k d_k}{\norm{b_k d_k}_\infty },\qquad &\text{ if }    \norm{b_k d_k}_\infty\neq 0;\\
 0, \qquad &\text{ otherwise}. 
\end{cases}
\end{align*}
Then $\mathbf{\widetilde{b}}:=\{\widetilde{b}_k\}_{k\geq 0}$ is a $\mathcal{Z}(M)$-valued uniformly bounded sequence bounded by $1$ and $\mathbf{w}=\{w_k\}_{k\geq 0}$ is an element in $W_q^\alpha$. Indeed, $\mathbf{w}\in W_q^\alpha$ follows from the assumption $\mathbf{b}\in W_q^\alpha(\mathcal{Z}(\mathcal{M}))$ and $\mathbf{d}$ is bounded. 
Consequently, we have
\begin{align*}
M_n(\mathbf{\widetilde{b}},\mathbf{1}, \mathbf{w}, T, \alpha)x
=M_n(\mathbf{b},\mathbf{d}, \mathbf{1}, T, \alpha)x.
\end{align*}
Thus, to prove the results in the statement, it is enough to show $(i)$, $(ii)$ and $(iii)$ hold for the  averages $M_n(\mathbf{b},\mathbf{1}, \mathbf{w}, T, \alpha)$, where $\mathbf{b}=\{b_k\}_{k\geq 0}$ is a
$\mathcal{Z}(M)$-valued uniformly bounded sequence bounded by $1$ and 
$\mathbf{w}=\{w_k\}_{k\geq 0}$ is an element in $W_q^\alpha$. Since $\mathbf{w}\in W_q^\alpha$ can be written as a linear combination of four positive elements, it suffices to consider the case where $\mathbf{w}\in W_q^\alpha$ is positive.

Since $\sup_{k\geq 0}\norm{b_k}_{\infty}\leq 1$, we have $\RE(b_k)+1, \IM(b_k)+1\in \mathcal{Z}(\M)$ and
\begin{align*}
0\leq \RE(b_k)+1\leq 2 \quad\text{ and } \quad 0\leq \IM(b_k)+1\leq 2, \qquad\forall k\geq 0. 
\end{align*}
Consequently, it follows from Remark \ref{Rem:Commutivityofoperators}, that 
\begin{align*}
0\leq \left(\RE(b_k)+1\right)x\leq 2x \quad\text{ and }\quad 0\leq \left(\IM(b_k)+1\right)x\leq 2x, \qquad\forall k\geq 0.    
\end{align*}
By linearity of $T$, we can write 
\begin{align*}
M_n(\mathbf{b},\mathbf{1}, \mathbf{w}, T, \alpha) x
=&M_n(\RE(\mathbf{b}+\mathbf{1}),\mathbf{1}, \mathbf{w}, T, \alpha) x 
+  M_n(\IM(\mathbf{b}+\mathbf{1}),\mathbf{1}, \mathbf{w}, T, \alpha) x \\
&\qquad-(1+i)M_n(\mathbf{1},\mathbf{1}, \mathbf{w}, T, \alpha) x.
\end{align*}
Since $T\in DS^+$, we have 
\begin{align*}
0\leq M_n(\RE(\mathbf{b}+\mathbf{1}),\mathbf{1}, \mathbf{w}, T, \alpha) x 
\leq 2 M_n(\mathbf{1},\mathbf{1}, \mathbf{w}, T, \alpha) x
\end{align*}
and 
\begin{align*}
0\leq  M_n(\IM(\mathbf{b}+\mathbf{1}),\mathbf{1}, \mathbf{w}, T, \alpha) x 
 \leq 2M_n(\mathbf{1},\mathbf{1}, \mathbf{w}, T, \alpha) x.    
\end{align*}
By using the triangle inequality, we have
\begin{align*}
\norm{\left(M_n(\mathbf{b}, \mathbf{1}, \mathbf{w},T, \alpha)x\right)_{n\geq 0}}_{L_p(\mathcal{M}; \ell_{\infty})}
\leq 6 \norm{\left(M_n(\mathbf{1}, \mathbf{1}, \mathbf{w},T, \alpha)x\right)_{n\geq 0}}_{L_p(\mathcal{M}; \ell_{\infty})}.
\end{align*}
The conclusions in $(i)$ now follows from \cite[Theorem 1.2]{HJZ26}.  
The same conclusion holds when $\mathbf{b}=\{b_k\}_{k\geq 0}$ is bounded, by an analogous argument.

The same argument applies to $(ii)$ and $(iii)$ by invoking \cite[Theorem 1.2]{HJZ26}; hence, we omit the details.
This completes the proof.
\end{proof}

The following weak-type inequality will be useful in our analysis. Its proof is similar to that of \cite[Corollary 4.4]{HJZ26}. More precisely, we establish a weak-type inequality for the $\mathcal{Z}(M)$-valued weighted averages considered in Eq. \eqref{Eq:ModulatedWeightedAverage}.

\begin{thm}\label{Thm:OneSidedWeakTypeInequality}
For $\alpha>0$ and $1< q\leq\infty$, let $\mathbf{b}=\{b_k\}_{k\geq 0}\in W_{q}^\alpha(\mathcal{Z}(\M))$. Let $q^\prime$ denote the conjugate index of $q$, that is, 
$\frac{1}{p}+\frac{1}{q}=1$. Let $T$ be a $DS^{+}$ operator on $\M$. Then the following statements hold:
\begin{enumerate}[label=(\roman*)]
\item For every $x\in L_p(\M)$ with $\max \{\frac{q^\prime}{\alpha}, q^\prime\}<p\leq\infty$ and for any $\varepsilon>0$, there exists a projection $e\in\mathcal{P}(\M)$ such that
\begin{align*}
\tau(e^\perp)\leq 4\left(C_{\alpha, p, q}\frac{\norm{x}_p}{\varepsilon}\right)^p 
\quad\text{ and }\quad 
\sup_{n\geq 0}\norm{e \left(M_n(\mathbf{b},\mathbf{1}, \mathbf{1}, T, \alpha) x\right) e}_\infty \leq 24\varepsilon,
\end{align*}
where $C_{\alpha, p, q}$ is a positive constant which depends only on the parameters $\alpha, p$ and $q$. Moreover, for every $x\in L_p(\M)$ with $\max\{\frac{2q^\prime}{\alpha}, 2q^\prime\}<p\leq \infty$ and for any $\varepsilon>0$, there exists a projection $e\in\mathcal{P}(\M)$ such that
\begin{align*}
\tau(e^\perp)\leq 4\left(\sqrt{C_{\alpha, \frac{p}{2}, q}}\frac{\norm{x}_p}{\varepsilon}\right)^p 
\quad\text{ and }\quad 
\sup_{n\geq 0}\norm{ \left(M_n(\mathbf{b},\mathbf{1}, \mathbf{1}, T, \alpha) x\right) e}_\infty \leq 24\varepsilon.
\end{align*} 
\item If $1<q\leq \infty$, and $\alpha\geq 1$, then for every $x\in L_p(\M)$ with $p=q^\prime$ $($resp. $p=2q^\prime)$, for any $\varepsilon>0$, there exists a projection $e\in\mathcal{P}(\M)$ such that 
\begin{align*}
 \tau(e^\perp)\leq \frac{64\norm{x}_p^p}{\varepsilon^p} \quad \text{ and }\quad &\sup_{n\geq 0}\norm{e \left(M_n(\mathbf{b},\mathbf{1}, \mathbf{1}, T, \alpha) x\right) e}_\infty
 \leq 24\varepsilon \\
 (\text{resp.  }&\sup_{n\geq 0}\norm{ \left(M_n(\mathbf{b},\mathbf{1}, \mathbf{1}, T, \alpha) x\right) e}_\infty\lesssim_\alpha \varepsilon).
\end{align*}
\end{enumerate}
\end{thm}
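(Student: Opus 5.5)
We reduce to scalar weights exactly as in the proof of Theorem \ref{Thm:MaximalInequalityForVectorWeightedSequenceSpace}, and then pass from the maximal norm to a projection by the usual argument for positive sequences (as in \cite[Corollary 4.4]{HJZ26}).

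\textbf{Reduction to positive scalar weights.} First reduce to $x\ge 0$: write $x$ as a combination of four positive parts $x_j$ with $\norm{x_j}_p\le\norm{x}_p$, treat each separately, and intersect the four resulting projections. This is where the factor $4$ in $\tau(e^\perp)$ will come from. Next set $w_k=\norm{b_k}_\infty$ and $\widetilde b_k=b_k/w_k$ (and $\widetilde b_k=0$ if $w_k=0$). Then $\mathbf w\in W_q^\alpha$, $\mathbf w\ge 0$, and $\widetilde{\mathbf b}$ is central with $\sup_k\norm{\widetilde b_k}_\infty\le 1$, so $M_n(\mathbf b,\mathbf 1,\mathbf 1,T,\alpha)x=M_n(\widetilde{\mathbf b},\mathbf 1,\mathbf w,T,\alpha)x$. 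Remark \ref{Rem:Commutivityofoperators} and positivity of $T$ then give the decomposition
\[
M_n(\widetilde{\mathbf b},\mathbf 1,\mathbf w,T,\alpha)x=X_n^{(1)}+X_n^{(2)}-(1+i)S_nx,
\qquad 0\le X_n^{(j)}\le 2S_nx,
\]
where $S_n=M_n(\mathbf 1,\mathbf 1,\mathbf w,T,\alpha)$.

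\textbf{Choosing the projection.} For (i), \cite[Theorem 1.2]{HJZ26} gives some $a\in L_p(\M)_+$ with $S_nx\le a$ for all $n$ and $\norm{a}_p\le C_{\alpha,p,q}\norm{x}_p$. Take $e=e_{[0,\varepsilon]}(a)$. By Chebyshev,
\[
\tau(e^\perp)\le \frac{\norm{a}_p^p}{\varepsilon^p}\le\Bigl(\frac{C_{\alpha,p,q}\norm{x}_p}{\varepsilon}\Bigr)^p .
\]
Moreover $0\le eS_nxe\le eae\le\varepsilon$ and $0\le eX_n^{(j)}e\le 2\varepsilon$. Hence $\norm{eM_nxe}_\infty\le(2+2+\sqrt2)\varepsilon$ for one positive piece. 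Summing over the four positive pieces of $x$ stays within $24\varepsilon$, with $\tau(e^\perp)$ multiplied by $4$.

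For the column (one-sided) version, use $(S_nx)^2\le$ something controlled in the column maximal norm. Here I would rather invoke the column maximal estimate of \cite[Theorem 1.2]{HJZ26} directly: it gives $S_nx=y_na$ with $\norm{y_n}_\infty\le 1$ and $\norm{a}_p\le\sqrt{C}\norm{x}_p$, or equivalently $(S_nx)^2\le c$ with $c\in L_{p/2}(\M)_+$. Take $e=e_{[0,\varepsilon^2]}(c)$, so that $\norm{S_nxe}_\infty^2=\norm{e(S_nx)^2e}_\infty\le\varepsilon^2$.

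The pieces $X_n^{(j)}$ need the inequality $(X_n^{(j)})^2\lesssim (S_nx)^2$. This does not follow from $0\le X\le 2S$ in general, and it is the main obstacle. I would avoid it by using centrality: $X_n^{(j)}$ is itself a positive-weighted average $M_n(\mathbf c^{(j)},\mathbf 1,\mathbf w,T,\alpha)x$ with central $0\le c^{(j)}_k\le 2$. I would then apply the Hölder/Kadison-type bound of Lemma \ref{Lem:HolderInequalityForOperator} in the form
\[
(X_n^{(j)})^2\le 2\,M_n(\mathbf 1,\mathbf 1,\mathbf w^2\text{-type},T,\alpha)(x^2),
\]
so that the column estimate reduces to the $\frac p2$ maximal inequality applied to $x^2\in L_{p/2}$. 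The range $p>\max\{2q'/\alpha,2q'\}$ is exactly the condition for $p/2$ to lie in the admissible range of (i), which matches the constant $\sqrt{C_{\alpha,p/2,q}}$ in the statement.

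\textbf{Part (ii).} Run the same reduction, but start from the weak-type projection statement of Theorem \ref{Thm:MaximalInequalityForVectorWeightedSequenceSpace}(ii) (i.e. \cite[Theorem 1.2]{HJZ26} at the endpoint $p=q'$ or $p=2q'$). This provides $e$ with $\tau(e^\perp)\le16\norm{x}_p^p/\varepsilon^p$ and $\sup_n\norm{eS_nxe}_\infty\le\varepsilon$. Positivity gives $0\le eX_n^{(j)}e\le 2eS_nxe$, so we get the bound $24\varepsilon$ after combining the four positive parts of $x$, which yields $64=4\cdot16$. The one-sided endpoint case is handled as in the column argument above, with the constant absorbed into $\lesssim_\alpha$.
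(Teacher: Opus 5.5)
Your overall route is the paper's. The paper proves this statement by running the spectral-projection argument of \cite[Corollary 4.4]{HJZ26} on top of Theorem \ref{Thm:MaximalInequalityForVectorWeightedSequenceSpace}. That theorem is itself proved by exactly your reduction $M_n(\mathbf b,\mathbf 1,\mathbf 1,T,\alpha)x=M_n(\widetilde{\mathbf b},\mathbf 1,\mathbf w,T,\alpha)x$, followed by splitting into pieces dominated by $2S_nx$. (The second piece should carry a factor $i$, i.e.\ $X_n^{(1)}+iX_n^{(2)}-(1+i)S_nx$; this is harmless for norms.) Your two-sided arguments in (i) and (ii) are correct and reproduce the constants $4$, $64$ and $24$. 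These use the spectral projection $e=e_{[0,\varepsilon]}(a)$, Chebyshev's inequality, $0\le eX_n^{(j)}e\le 2eS_nxe$, and the intersection over the four positive parts of $x$.

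The one-sided parts are where your proposal has a gap. You rightly observe that $0\le X\le 2S$ gives no control of $\norm{Xe}_\infty$ by $\norm{Se}_\infty$. The paper passes over this point: the reduction in the proof of Theorem \ref{Thm:MaximalInequalityForVectorWeightedSequenceSpace} uses only positive domination, which works for $L_p(\M;\ell_\infty)$ but not for $L_p(\M;\ell_\infty^c)$.

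Your repair, however, is not justified as written. Lemma \ref{Lem:HolderInequalityForOperator} bounds $X_n^{(j)}$ itself by a root of an average. Since $t\mapsto t^2$ is not operator monotone, that cannot give an inequality for $(X_n^{(j)})^2$; this is the same obstruction you identified. Moreover, if the right-hand side really carried weights $w_k^2$, you would need $\mathbf w^2\in W^\alpha_{q/2}$, hence $q>2$. The admissible range for $p/2$ would then be governed by $(q/2)'$ rather than $q'$, so it would not match the range in the statement.

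The right tool is Kadison's inequality for positive maps: $\Psi(a)^2\le\norm{\Psi(1)}_\infty\Psi(a^2)$ for self-adjoint $a$. It holds because $\Psi$ restricted to $C^*(a,1)$ is completely positive, and it needs only positivity of $T$, since your reduction makes $x\ge 0$. Apply it to
\[
\Psi_n(y)=\frac1{A_n^\alpha}\sum_{k=0}^nA_{n-k}^{\alpha-1}w_kT^k\bigl(c^{(j)}_ky\bigr),
\]
which is positive because each $c^{(j)}_k$ is central with $0\le c^{(j)}_k\le 2$. Hölder's inequality gives $\norm{\Psi_n(1)}_\infty\le\frac2{A_n^\alpha}\sum_kA_{n-k}^{\alpha-1}w_k\le 2\abs{\mathbf w}_q$. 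Hence
\[
(X_n^{(j)})^2\le 4\abs{\mathbf w}_q\,S_n(x^2)\qquad\text{and}\qquad (S_nx)^2\le\abs{\mathbf w}_q\,S_n(x^2),
\]
with the same weight $\mathbf w$. Prove these first for bounded spectral truncations of $x$, then pass to $L_{p/2}$-limits.

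Now apply the two-sided estimate for $S_n$ to $x^2\in L_{p/2}(\M)$. Use the strong type when $p/2>\max\{q'/\alpha,q'\}$, and the weak type at $p/2=q'$ with $\varepsilon^2$ in place of $\varepsilon$. This yields one projection per positive part of $x$ that controls $\norm{(M_nx)e}_\infty$ for all pieces at once. It also produces exactly the constant $\sqrt{C_{\alpha,p/2,q}}$ appearing in the statement.
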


\begin{proof}
The proof of this result is similar to the proof of  \cite[Corollary 4.4]{HJZ26} by replacing the role of \cite[Theorem 1.2]{HJZ26} by Theorem \ref{Thm:MaximalInequalityForVectorWeightedSequenceSpace}. Thus, we omit the details. This completes the proof.  
\end{proof}

\section{Subsequential Weighted Ergodic Average on Noncommutative Orlicz Spaces}\label{Sec:MainResults}
This is the final section of this paper. In this section, we study the $b.a.u.$ $($resp. $a.u.)$ convergence in a noncommutative Orlicz space of weighted ergodic averages considered in Eqs. \eqref{Eq:ModulatedWeightedAverage} and \eqref{Eq:ModulatedWeightedAverage2}. Some of our techniques are inspired by \c C\"omez and Litvinov \cite{CL13}. The $p$-convexity of the underlying Orlicz function plays a crucial role in our approach. 
We begin with the following theorem, which shows that the aforesaid weighted ergodic averages are $b.u.e.m.$ at $0$ in noncommutative Orlicz spaces.

\begin{thm}\label{Thm:buemAt0ForM_nbTx}
For $\alpha>0$ and $1< q\leq\infty$, let $\mathbf{b}=\{b_k\}_{k\geq 0}\in W_{q}^\alpha(\mathcal{Z}(\M))$. Let $q^\prime$ denote the conjugate index of $q$, that is, $\frac{1}{q}+\frac{1}{q^\prime}=1$.
Let $T$ be a $DS^{+}$ operator on $\M$. Then the following statements hold:

\noindent $($a$)$ The sequence $\{M_n(\mathbf{b},\mathbf{1}, \mathbf{1}, T, \alpha)\}_{n\geq 0}$ is $b.u.e.m.$ $($resp. $u.e.m.)$ at $0$ on $L_\Phi(\M)$ in the following cases:
\begin{enumerate}[label=(\roman*)]
\item For $\alpha>0$, $\Phi$ is a $p$-convex Orlicz  function with $\max \{\frac{q^\prime}{\alpha}, q^\prime\}< p<\infty$ $($resp. $\max \{\frac{2q^\prime}{\alpha}, 2q^\prime\}< p<\infty)$. 
\item For $\alpha \geq 1$, $\Phi$ is a $p$-convex Orlicz  function with $p=q^\prime<\infty$ $($resp. $p=2q^\prime<\infty)$.
\end{enumerate}
Moreover, if $\widetilde{\Phi}(t)=\Phi(\sqrt{t})$ is $p$-convex, then $b.u.e.m.$ at $0$ on $L_\Phi(\M)$ will be replaced by $u.e.m.$ at $0$ on $L_\Phi(\M)$. 

\noindent $($b$)$ If $\alpha\geq 1$, and 
$\mathbf{k}=\{k_j\}_{j\geq 0}$ is a strictly increasing sequence of natural numbers with lower density $d>0$, then the conclusion of $($a$)$ also holds along the subsequence $\{M_n^{\mathbf{k}}(\mathbf{b},\mathbf{1}, \mathbf{w}, T, \alpha)\}_{n\geq 0}$.
\end{thm}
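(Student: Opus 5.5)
We would prove (a) by combining the spectral decomposition of Proposition \ref{Prop:ToMakeSmallElementInOrliczSpace} with the weak-type estimates of Theorem \ref{Thm:OneSidedWeakTypeInequality}. We treat the $b.u.e.m.$ case first and then the one-sided case. Since every $x\in L_\Phi(\M)$ is a combination of four positive elements with Orlicz norms at most $\norm{x}_\Phi$, and the maps $M_n(\mathbf{b},\mathbf{1},\mathbf{1},T,\alpha)$ are linear, it suffices to handle $x\in L_\Phi(\M)_+$. Reducing to the positive case costs only a constant factor in $\delta$ and a factor $4$ in $\varepsilon$.

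\textbf{Step 1: decomposition.} Fix $\varepsilon,\delta>0$. For $0\le x\in L_\Phi(\M)$ with $\norm{x}_\Phi<\gamma\le 1$, Proposition \ref{Prop:ToMakeSmallElementInOrliczSpace} applied with a small parameter $\eta$ gives $x\le x_\eta+ty$. Here $\norm{x_\eta}_\infty\le\eta$, $\norm{y}_p^p\le\norm{x}_\Phi<\gamma$, and $t=\eta/\Phi^{1/p}(\eta)$ depends only on $\eta$, not on $x$. This independence of $t$ from $x$ is what makes the choice of $\gamma$ uniform.

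\textbf{Step 2: reduction to central positive weights.} As in the proof of Theorem \ref{Thm:MaximalInequalityForVectorWeightedSequenceSpace}, we write $b_k=\norm{b_k}_\infty\widetilde b_k$ and split $\widetilde b_k$ into real and imaginary parts shifted by $1$. This expresses $M_n(\mathbf{b},\mathbf{1},\mathbf{1},T,\alpha)x$ as a combination of averages $M_n(\mathbf{c},\mathbf{1},\mathbf{w},T,\alpha)x$. In these averages $c_k\in\mathcal{Z}(\M)$ with $0\le c_k\le2$, and $w_k=\norm{b_k}_\infty\ge0$ lies in $W_q^\alpha$. By Remark \ref{Rem:Commutivityofoperators} and positivity of $T$, each such average is positive and dominated by $2M_n(\mathbf{1},\mathbf{1},\mathbf{w},T,\alpha)x\le 2M_n(\mathbf{1},\mathbf{1},\mathbf{w},T,\alpha)x_\eta+2t\,M_n(\mathbf{1},\mathbf{1},\mathbf{w},T,\alpha)y$.

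\textbf{Step 3: bounding the two pieces.} For the bounded piece, $\norm{M_n(\mathbf{1},\mathbf{1},\mathbf{w},T,\alpha)x_\eta}_\infty\le\eta\,\frac{1}{A_n^\alpha}\sum_k A^{\alpha-1}_{n-k}w_k\le \eta\,|\mathbf{w}|_q$, using Hölder and the finiteness of $|\mathbf{w}|_q$ (the Remark after Theorem \ref{Thm:WqAlphaComplete}). We choose $\eta$ so that this is below $\delta/4$. For the $L_p$ piece, Theorem \ref{Thm:OneSidedWeakTypeInequality}(i), or (ii) when $\alpha\ge1$ and $p=q'$, applied with $\varepsilon'=\delta/(100t)$ gives a projection $e$ with $\tau(e^\perp)\lesssim (C\norm{y}_p t/\delta)^p\lesssim C^p t^p\gamma/\delta^p$ and $\sup_n\norm{e\,M_n(\cdot)y\,e}_\infty\le\delta/(4t)$. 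We then choose $\gamma$ so that $\tau(e^\perp)\le\varepsilon$. Finally, for $0\le a\le b$ we have $\norm{eae}_\infty\le\norm{ebe}_\infty$, which yields $\sup_n\norm{e M_n x e}_\infty\lesssim\delta$.

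\textbf{Step 4: the one-sided case.} For $u.e.m.$ we cannot use order domination directly, since $a\le b$ does not give $\norm{ae}\le\norm{be}$. Instead we use $\norm{ae}_\infty^2=\norm{ea^*ae}_\infty$ together with the operator inequality $(M_nx)^*(M_nx)\le C\,M_n(x^2)$, a Kadison–Schwarz/Hölder-type bound from Lemma \ref{Lem:HolderInequalityForOperator}. We then apply the two-sided argument to $x^2\in L_{\widetilde\Phi}$, which is why $p$-convexity of $\widetilde\Phi$ is needed. Alternatively, in the range $p>2q'$ we use the column estimates of Theorem \ref{Thm:OneSidedWeakTypeInequality}, splitting $x=x^{1/2}x^{1/2}$. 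Getting this one-sided step right is where we expect the main obstacle: the decomposition of Proposition \ref{Prop:ToMakeSmallElementInOrliczSpace} is order-theoretic, while column norms are not monotone.

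\textbf{Part (b).} The subsequential averages satisfy the same scheme because $M_n^{\mathbf{k}}$ is dominated by a full average. Set $w'_m=w_{k_j}$ if $m=k_j$ and $w'_m=0$ otherwise. Since $\sup_j k_j/j<\infty$ by lower density, and $\alpha\ge1$ makes $A^{\alpha-1}_{n-j}$ monotone with $A^{\alpha-1}_{n-j}\le A^{\alpha-1}_{k_n-k_j}$ and $A_{k_n}^\alpha/A_n^\alpha$ bounded, we get
$M^{\mathbf{k}}_n x\lesssim_{\alpha,d} M_{k_n}(\mathbf{1},\mathbf{1},\mathbf{w}',T,\alpha)x$ for positive $x$. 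We must also check that $\mathbf{w}'\in W_q^\alpha$, which follows from $\mathbf{w}'\le\mathbf{w}$. Steps 1–4 then apply verbatim.
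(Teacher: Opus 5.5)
Your two-sided argument (Steps 1--3) and your treatment of (b) are essentially the paper's proof. It uses the same splitting, with $t$ from Lemma \ref{Lem:pConvex} independent of $x$, and the same order domination combined with Theorem \ref{Thm:OneSidedWeakTypeInequality} applied to $\Phi^{1/p}(x)$. For (b) it uses the same domination $M_n^{\mathbf{k}}\lesssim (A_{k_n}^\alpha/A_n^\alpha)M_{k_n}$ with the weight restricted to $\mathbf{k}$ (the paper's $\mathbf{cb}$, $c_j=\chi_{\mathbf{k}}(j)$).

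For the one-sided case the paper only says it ``can be handled similarly''. You are right that $x\le x_\eta+ty$ gives no control of $\norm{\,\cdot\,e}_\infty$. Your Kadison route works, but not via the lemma you cite. Lemma \ref{Lem:HolderInequalityForOperator} with exponents $(2,2)$ only gives an inequality of the form $M_nx\le C\,(\cdots)^{1/2}$, and for operators $a\le b^{1/2}$ does not imply $a^2\le b$. The correct tool is Kadison's inequality:
\begin{itemize}
\item Each $\Psi=M_n(\mathbf{c},\mathbf{1},\mathbf{w},T,\alpha)$ with $c_k\in\mathcal{Z}(\M)_+$ is a positive map.
\item Its restriction to the abelian $C^*$-algebra generated by $1$ and a self-adjoint $a\in\M$ is therefore completely positive.
\item Hence $\Psi(a)^2\le\norm{\Psi(1)}_\infty\Psi(a^2)$, with $\norm{\Psi(1)}_\infty\le 2\abs{\mathbf{w}}_q$ uniformly in $n$.
\item This extends to $x\in L_\Phi(\M)_+$ by truncating $x\chi_{[0,m]}(x)\to x$ in $L_1(\M)+\M$ and using that the positive cone of $L_0(\M)$ is closed in measure.
\end{itemize}
Since $\norm{x^2}_{\widetilde\Phi}=\norm{x}_\Phi^2$, the two-sided result on $L_{\widetilde\Phi}(\M)$ then gives $u.e.m.$ This also covers the ``resp.'' ranges: the identity $\widetilde\Phi(s^{2/p})=\Phi(s^{1/p})$ shows that $\widetilde\Phi$ is $\frac{p}{2}$-convex whenever $\Phi$ is $p$-convex. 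So your $x=x^{1/2}x^{1/2}$ alternative is unnecessary.

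A more direct fix uses the column estimates the paper already states: replace $x\le x_\eta+ty$ by the exact splitting $x=x\chi_{[0,\eta)}(x)+x\chi_{[\eta,\infty)}(x)$. Lemma \ref{Lem:pConvex} gives $\lambda^p\le t^p\Phi(\lambda)$ for $\lambda\ge\eta$, hence $\norm{x\chi_{[\eta,\infty)}(x)}_p^p\le t^p\tau(\Phi(x))\le t^p\norm{x}_\Phi$. Then linearity, the triangle inequality for $\norm{\,\cdot\,e}_\infty$, and the column estimates of Theorem \ref{Thm:OneSidedWeakTypeInequality} yield $u.e.m.$ This needs no order monotonicity and no reduction to positive $\mathbf{b}$. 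The ``moreover'' clause then follows as a corollary, since $\widetilde\Phi$ is $p$-convex iff $\Phi$ is $2p$-convex.

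The two routes trade off differently. The exact splitting is more elementary and reuses the column maximal inequality. Your route needs only the two-sided estimate and positivity of the averages, which is the classical $b.a.u.\Rightarrow a.u.$ mechanism from the Rota setting, at the cost of the Kadison and truncation steps.
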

\begin{proof}
We prove the result only for the $b.u.e.m.$ case with $\alpha>0$ and $\Phi$ is a $p$-convex Orlicz  function, where $\max \{\frac{q^\prime}{\alpha}, q^\prime\}< p<\infty$. The other $b.u.e.m.$ cases and $u.e.m.$ cases can be handled similarly. 

As before, in the proof of Theorem \ref{Thm:MaximalInequalityForVectorWeightedSequenceSpace}, define 
\begin{align*}
\widetilde{b}_k=\frac{b_k}{\norm{b_k}_\infty} \quad\text{ and }\quad w_k=\norm{b_k}_\infty, \qquad\qquad k\geq 0.    
\end{align*}
Then $\mathbf{\widetilde{b}}:=\{\widetilde{b}_k\}_{k\geq 0}$ is a $\mathcal{Z}(M)$-valued sequence with $\norm{\widetilde{b}_k}_\infty=1$ for all $k\geq 0$, and $\mathbf{w}=\{w_k\}_{k\geq 0}$ is a scalar sequence in $W_q^\alpha$ such that
\begin{align*}
M_n(\mathbf{\widetilde{b}},\mathbf{1}, \mathbf{w}, T, \alpha)
=M_n(\mathbf{b},\mathbf{1}, \mathbf{1}, T, \alpha).
\end{align*}
Thus, without loss of generality, we can assume that $\mathbf{b}=\{b_k\}_{k\geq 0}$ is a $\mathcal{Z}(\M)$-valued bounded sequence with $\norm{b_k}_\infty\leq 1$ for all $k\geq 0$, and $\mathbf{w}=\{w_k\}_{k\geq 0}$ is a positive scalar sequence in $W_q^\alpha$, and we show that the sequence
$\{M_n(\mathbf{b},\mathbf{1}, \mathbf{w}, T, \alpha)\}_{n\geq 0}$ $($resp. $\{M_n^{\mathbf{k}}(\mathbf{b},\mathbf{1}, \mathbf{w}, T, \alpha)\}_{n\geq 0})$ is $b.u.e.m.$ at $0$ on $L_\Phi(\M)$.

\vspace{2mm}

\noindent $($a$)$. 
Fix $\varepsilon,\delta>0$. Let $\abs{\mathbf{w}}_q := \abs{\mathbf{w}}_{W_q^{\alpha}}$. 
By Lemma \ref{Lem:pConvex}, there exists $t>0$ such that 
\begin{align*}
t\Phi^\frac{1}{p}(u)\geq u, \quad\text{ whenever }u\geq \frac{\delta}{2\abs{\mathbf{w}}_q}.
\end{align*}

Let $\gamma>0$ such that $\gamma<\min\{1, \frac{\varepsilon \delta^p}{4(48t C_{\alpha,p,q})^p}\}$, where $C_{\alpha,p,q}$ is the positive constant appearing in Theorem \ref{Thm:OneSidedWeakTypeInequality}$($i$)$. 
Suppose $x\in L_\Phi(\M)$ such that $\norm{x}_\Phi<\gamma$. We show that there exists a projection $e\in\mathcal{P}(\M)$ such that
\begin{align}\label{Eq:VectorValuedWeakType}
\tau(e^\perp)<\varepsilon\quad\text{ and }\quad \sup_{n\geq 0}\norm{e \left(M_n(\mathbf{b},\mathbf{1}, \mathbf{w}, T, \alpha) x\right)e}_\infty \leq \delta.  
\end{align}

First we show that Eq. \eqref{Eq:VectorValuedWeakType} holds for $x\in L_\Phi(\M)_+$ with $\norm{x}_\Phi<\gamma$ and  $b_k\geq 0$ for every $k\geq 0$. Let $x=\int_0^\infty \lambda\, de_\lambda$  denote the spectral decomposition of $x$. In the proof of Proposition  \ref{Prop:ToMakeSmallElementInOrliczSpace}, we rewrite Eq. \eqref{Eq:ToMakeSmallElementInOrliczSpace} as follows:
\begin{align*}
x
=\int_0^{\frac{\delta}{2\abs{\mathbf{w}}_q}} \lambda\, de_\lambda \,+\, \int_{\frac{\delta}{2\abs{\mathbf{w}}_q}}^{\infty} \lambda\, de_\lambda 
\leq \int_0^{\frac{\delta}{2\abs{\mathbf{w}}_q}} \lambda\, de_\lambda \,+\, t\int_{\frac{\delta}{2\abs{\mathbf{w}}_q}}^{\infty} \Phi^\frac{1}{p}(\lambda)\, de_\lambda 
\leq x_\delta +ty,
\end{align*}
where $x_\delta=\int_0^{\frac{\delta}{2\abs{\mathbf{w}}_q}} \lambda\, de_\lambda$ and $y=\Phi^\frac{1}{p}(x)$.

Since $T\in DS^+$, we have
\begin{align}\label{Eq:MnxdeltaEstimate}
M_n(\mathbf{b}, \mathbf{1}, \mathbf{w}, T, \alpha)x_\delta 
=&\frac{1}{A_n^{\alpha}} \sum_{k=0}^n A_{n-k}^{\alpha-1}w_kT^k(b_k x_\delta) \\
\nonumber=& \frac{1}{A_n^{\alpha}} \sum_{k=0}^n (A_{n-k}^{\alpha-1})^{\frac{1}{q}}w_k(A_{n-k}^{\alpha-1})^{\frac{1}{q^\prime}}T^k(b_k x_\delta) \\
\nonumber\underset{(\text{Eq. \eqref{Eq:HolderInequality}})}{\leq}& \bigg(\frac{1}{A_n^{\alpha}}\sum_{k=0}^n A_{n-k}^{\alpha-1}w_k^q \bigg)^{\frac{1}{q}}\bigg(\frac{1}{A_n^{\alpha}}\sum_{k=0}^n A_{n-k}^{\alpha-1}T^k((b_k x_\delta)^{q'}) \bigg)^{\frac{1}{q^\prime}} \\
\nonumber\leq &\abs{\mathbf{w}}_q \bigg(\frac{1}{A_n^{\alpha}}\sum_{k=0}^n A_{n-k}^{\alpha-1}T^k((b_k x_\delta)^{q^\prime}) \bigg)^{\frac{1}{q^\prime}}.
\end{align}
By Remark \ref{Rem:Commutivityofoperators} and using the estimate $\norm{x_\delta}_\infty\leq \frac{\delta}{2\abs{\mathbf{w}}_q}$, we have
\begin{align}\label{Eq:OpNorm}
(b_k x_\delta)^{q^\prime}
\leq (\norm{b_k}_\infty\norm{x_\delta}_\infty)^{q^\prime} 1
\leq (\norm{x_\delta}_\infty)^{q^\prime}1
\leq \left(\frac{\delta}{2\abs{\mathbf{w}}_q}\right)^{q^\prime} 1. 
\end{align}
Since $T$ is $DS^+$ operator, we have $T(1)\leq 1$. Therefore, by Eq. \eqref{Eq:MnxdeltaEstimate}, we have the following inequality:
\begin{align}\label{Eq:NormMnxDeltainfty}
\norm{M_n(\mathbf{b}, \mathbf{1}, \mathbf{w}, T,\alpha)x_\delta}_\infty 
&\leq \abs{\mathbf{w}}_q \norm{\bigg(\frac{1}{A_n^{\alpha}}\sum_{k=0}^n A_{n-k}^{\alpha-1}T^k((b_k x_\delta)^{q^\prime}) \bigg)^{\frac{1}{q^\prime}}}_{\infty} \\
\nonumber&\underset{(\text{Eq. \eqref{Eq:OpNorm}})}{\leq} \abs{\mathbf{w}}_q \frac{\delta}{2\abs{\mathbf{w}}_q} \norm{\bigg(\frac{1}{A_n^{\alpha}}\sum_{k=0}^n A_{n-k}^{\alpha-1}T^k(1) \bigg)^{\frac{1}{q^\prime}}}_{\infty} \\
\nonumber&\leq  \frac{\delta}{2} \norm{\bigg(\frac{1}{A_n^{\alpha}}\sum_{k=0}^n A_{n-k}^{\alpha-1} \bigg)^{\frac{1}{q^\prime}}}_{\infty}\\  
\nonumber&=\frac{\delta}{2}.
\end{align}
On the other hand, since $\norm{x}_\Phi<\gamma<1$, it follows from the Proposition \ref{Prop:ToMakeSmallElementInOrliczSpace}, that $\Phi^\frac{1}{p}(x)\in L_p(\M)$ and 
\begin{align*}
 \norm{\Phi^\frac{1}{p}(x)}_p^p\leq \norm{x}_\Phi<1.   
\end{align*}
Therefore, by Theorem \ref{Thm:OneSidedWeakTypeInequality}(i), for $\Phi^\frac{1}{p}(x)\in L_p(\M)$ and $\nu=\frac{\delta}{48t}$, there exists a projection $e\in\mathcal{P}(\M)$ such that 
\begin{align}\label{Eq:WeakTypePhi}
 \tau(e^\perp)\leq 4\left(\frac{C_{\alpha, p,q}\norm{\Phi^\frac{1}{p}(x)}_p}{\nu}\right)^p \quad\text{ and }\quad\sup_{n\geq 0}\norm{e \left(M_n(\mathbf{b},\mathbf{1}, \mathbf{w}, T, \alpha) \Phi^\frac{1}{p}(x)\right) e}_\infty
 \leq 24\nu.
\end{align}
Consequently, we have
\begin{align*}
\tau(e^\perp)\leq 4\left(\frac{C_{\alpha, p,q}\norm{\Phi^\frac{1}{p}(x)}_p}{\nu}\right)^p 
\leq 4\norm{\Phi^\frac{1}{p}(x)}_p^p\left(\frac{C_{\alpha,p,q}}{\nu}\right)^p 
\leq 4\norm{x}_\Phi\left(\frac{C_{\alpha,p,q}}{\frac{\delta}{48t}}\right)^p
\leq \varepsilon,   
\end{align*}
and by triangle inequality, 
\begin{align*}
 &\sup_{n\geq 0}\norm{e \left(M_n(\mathbf{b},\mathbf{1}, \mathbf{w}, T, \alpha) x\right) e}_\infty\\
 \leq &  
  \sup_{n\geq 0}\norm{e \left(M_n(\mathbf{b},\mathbf{1}, \mathbf{w}, T, \alpha) x_\delta\right) e}_\infty
  + 
t\sup_{n\geq 0}\norm{e \left(M_n(\mathbf{b},\mathbf{1}, \mathbf{w}, T, \alpha) \Phi^\frac{1}{p}(x)\right) e}_\infty\\
\underset{(\text{Eq. \eqref{Eq:WeakTypePhi}})}{\leq} &   
  \sup_{n\geq 0}\norm{ \left(M_n(\mathbf{b},\mathbf{1}, \mathbf{w}, T, \alpha) x_\delta\right)}_\infty
  + 
t(24\nu)\\
\underset{(\text{Eq. \eqref{Eq:NormMnxDeltainfty}})}{ \leq} & \frac{\delta}{2}+\frac{\delta}{2}\\
 \leq & \delta. 
\end{align*}

Recall that every $x\in L_\Phi(\M)$ can be written as a linear combination of four positive elements. More precisely, there exist  $x_j\in L_\Phi(\M)_+$, $1\leq j\leq 4$, such that 
\begin{align*}
x=x_1-x_2+i(x_3-x_4)\qquad\text{ and }\qquad \norm{x_j}_\Phi\leq \norm{x}_\Phi,\qquad\text{ for }  1\leq j\leq 4.  
\end{align*}
By Eq.  \eqref{Eq:VectorValuedWeakType}, taking $\widetilde{\varepsilon}=\frac{\varepsilon}{4}$ and $\widetilde{\delta}=\frac{\delta}{4}$, there exists $\widetilde{\gamma}>0$ such that for $x_j\in L_\Phi(\M)_+$, $1\leq j\leq 4$, with $\norm{x_j}_\Phi<\widetilde{\gamma}$, there exists a projection $e_j\in\mathcal{P}(\M)$ satisfying
\begin{align*}
\tau(e_j^\perp)\leq \frac{\varepsilon}{4} \quad \text{and} \quad \sup_{n\geq 0}\norm{e_j \left(M_n(\mathbf{b},\mathbf{1}, \mathbf{w}, T, \alpha)x_j\right)e_j}_\infty \leq \frac{\delta}{4}.    
\end{align*}
Let $e=\wedge_{j=1}^4 e_j$. 
Then,
\begin{align*}
\tau(e^\perp)
\leq \sum_{j=1}^4\tau(e_j^\perp)
\leq \varepsilon.
\end{align*}
Moreover, since $e\leq e_j$, for every $j$ and for $x\in L_\Phi(\M)$ with $\norm{x}_\Phi<\widetilde{\gamma}$, we have 
\begin{align*}
 \sup_{n\geq 0}\norm{e \left(M_n(\mathbf{b},\mathbf{1}, \mathbf{w}, T, \alpha)x\right)e}_\infty 
 \leq&\sum_{j=1}^4 \sup_{n\geq 0}\norm{e \left(M_n(\mathbf{b},\mathbf{1}, \mathbf{w}, T, \alpha)x_j\right)e}_\infty \\
 \leq&\sum_{j=1}^4 \sup_{n\geq 0}\norm{e_j \left(M_n(\mathbf{b},\mathbf{1}, \mathbf{w}, T, \alpha)x_j\right)e_j}_\infty\\
 \leq & \delta.     
\end{align*}
Therefore, the sequence $\{ M_n(\mathbf{b},\mathbf{1}, \mathbf{w}, T, \alpha)\}_{n\geq 0}$ is $b.u.e.m.$ at $0$ on $L_\Phi(\M)$.

For an arbitrary $\mathbf{b}=\{b_k\}_{k\geq 0}$, decomposing each $b_k$ as a linear combination of four positive elements and arguing as above, we conclude that the sequence $\{M_n(\mathbf{b},\mathbf{1},\mathbf{w},T,\alpha)\}_{n\geq 0}$ is $b.u.e.m.$ at $0$ on $L_\Phi(\M)$. This completes the proof.
\vspace{2mm}

\noindent $($b$)$. Now we deal with the subsequential expression and show that $\{M_n^{\mathbf{k}}(\mathbf{b},\mathbf{1}, \mathbf{w}, T, \alpha)\}_{n\geq 0}$ is $b.u.e.m.$ at $0$ on $L_\Phi(\M)$. Note that by an argument similar to part $(a)$, it is enough to assume that $b_k\geq 0$ for all $k\geq 0$.  Let us consider the sequence $\mathbf{c} = \{c_j\}_{j \geq 0}$ with $c_j = \chi_{\mathbf{k}}(j)$ for all $j\geq 0$. 
Observe that $\alpha-1\geq 0$ as $\alpha\geq 1$. Now, for any $y\in L_\Phi(\M)_+$, we have
\begin{align}\label{Eq:SubsequenceAveragesInequality}
M_n^{\mathbf{k}}(\mathbf{b},\mathbf{1}, \mathbf{w}, T, \alpha)y
=&\frac{1}{A_n^{\alpha}} \sum_{j=0}^n A_{n-j}^{\alpha-1}w_{k_j}T^{k_j}(b_{k_j} y)\\
\nonumber=&\frac{1}{A_n^{\alpha}} \sum_{j=0}^n \left(\frac{A_{n-j}^{\alpha-1}}{A_{k_n-k_j}^{\alpha-1}}\right) A_{k_n-k_j}^{\alpha-1}w_{k_j}T^{k_j}(b_{k_j} y)\\
\nonumber\leq &\frac{1}{A_n^{\alpha}} \sum_{j=0}^n  A_{k_n-k_j}^{\alpha-1}w_{k_j}T^{k_j}(b_{k_j} y)\qquad (\text{since }\frac{A_{n-j}^{\alpha-1}}{A_{k_n-k_j}^{\alpha-1}}\leq 1)\\
\nonumber= &\frac{1}{A_n^{\alpha}} \sum_{j=0}^{k_n}  A_{k_n-j}^{\alpha-1}w_{j}T^{j}(c_jb_{j} y)\\
\nonumber= &\left(\frac{A_{k_n}^\alpha}{A_n^{\alpha}}\right) \frac{1}{A_{k_n}^\alpha}\sum_{j=0}^{k_n}  A_{k_n-j}^{\alpha-1}w_{j}T^{j}(c_jb_{j} y)\\
\nonumber= &\left(\frac{A_{k_n}^\alpha}{A_n^{\alpha}}\right) M_{k_{n}}(\mathbf{cb},\mathbf{1}, \mathbf{w}, T, \alpha)y,
\end{align}
where $\mathbf{cb} = \{c_jb_j\}_{j \geq 0}$.

Let $K= \sup_{n\geq 1} \frac{k_n}{n}$. Then, $0<K < \infty$ as lower density $d>0$ $($see, \cite[Lemma 40]{Ro94}$)$.
From the part $($a$)$ of this theorem, the sequence $\{M_{k_{n}}(\mathbf{cb},\mathbf{1}, \mathbf{w}, T, \alpha)\}_{n\geq 0}$ is $b.u.e.m.$ at $0$ on $L_\Phi(\M)$.  Therefore, for $\varepsilon, \delta>0$, there exists $\gamma>0$ such that for $\norm{y}_\Phi < \gamma$ with $y\in L_\Phi(\M)_+$, there exists a projection $e \in \mathcal{P}(\M)$ such that
\begin{align}\label{Eq:BUEMForArbitaryx}
\tau(e^\perp)\leq \frac{\varepsilon}{4} \quad \text{and} \quad \sup_{n\geq 0}\norm{e \left(M_{k_n}(\mathbf{cb},\mathbf{1}, \mathbf{w}, T, \alpha)y\right)e}_\infty \leq \frac{\delta}{4(K+1)^{\alpha}\Gamma(\alpha+1)}.
\end{align}
By Eq. \eqref{Eq:AnalphaInequality}, we have
\begin{align}\label{Eq:AknByAn}
\frac{A_{k_n}^\alpha}{A_n^\alpha}   \lesssim_{\alpha}\Gamma(\alpha+1)\left(\frac{k_n+1}{n}\right)^\alpha
\leq  (K+1)^{\alpha}\Gamma(\alpha+1). 
\end{align} 
Therefore, 
\begin{align*}
\sup_{n\geq 0}\norm{e \left(M_n^{\mathbf{k}}(\mathbf{b},\mathbf{1}, \mathbf{w}, T, \alpha)y\right)e}_\infty
\underset{(\text{Eq. }\eqref{Eq:SubsequenceAveragesInequality})}{\leq}& 
\sup_{n\geq 0}\left(\frac{A_{k_n}^\alpha}{A_n^{\alpha}}\right) \norm{e \left(M_{k_{n}}(\mathbf{cb},\mathbf{1}, \mathbf{w}, T, \alpha)y\right)e}_\infty\\
\underset{(\text{Eq. \eqref{Eq:AknByAn}})}{\lesssim_{\alpha}} & 
\big((K+1)^{\alpha}\Gamma(\alpha+1)\big) \sup_{n\geq 0} \norm{e \left(M_{k_{n}}(\mathbf{cb},\mathbf{1}, \mathbf{w}, T, \alpha)y\right)e}_\infty\\   
\leq& \frac{\delta}{4}.
\end{align*}
By an argument similar to that used in part $(a)$ of the proof of this theorem, the above conclusion extends to every $y\in L_\Phi(\M)$ with $\norm{y}_\Phi<\gamma$. Indeed, writing
\begin{align*}
y=y_1-y_2+i(y_3-y_4),
\end{align*}
where $y_j\in L_\Phi(\M)_+$ and $\norm{y_j}_\Phi\leq\norm{y}_\Phi$ for $1\leq j\leq 4$, and applying the preceding argument to each $y_j$, we conclude that the sequence $\{M_n^{\mathbf{k}}(\mathbf{b},\mathbf{1},\mathbf{w},T,\alpha)\}_{n\geq  0}$ is $b.u.e.m.$ at $0$ on $L_\Phi(\M)$. This completes the proof.
\end{proof}


We now establish the pointwise convergence of the weighted ergodic averages, as well as their subsequential counterparts, on a dense subset of the noncommutative Orlicz space.

\begin{proof}[\textbf{Proof of Theorem \ref{Thm:ConvergentForDenseSet}}]
Let $x \in L_1(\M)\cap\M$. We may assume that $x\neq 0$. Suppose $\mathbf{d}=\{d_k\}_{k\geq 0}$ is bounded. Denote 
\begin{align*}
C := \sup_{k \geq 0} \norm{d_k}_\infty < \infty.   
\end{align*}
Let $\varepsilon>0$ be given. Since $\mathbf{b}\in \mathcal{U}_f(1,\alpha)$,  there exists a trigonometric polynomial over $\mathcal{U}_f$, 
\begin{align*}
\psi_1(\cdot) = \sum_{i=1}^m z_i u_i^{(\cdot)},  
\end{align*}
where $u_i\in \mathcal{U}_f$ and $z_i\in \mathbb{C}$, $1\leq i\leq m$, such that
\begin{align}\label{Eq:bkEstimate}
\limsup_{n\rightarrow\infty}\frac{1}{A_{n}^\alpha}\sum_{k=0}^n A_{n-k}^{\alpha-1}\norm{b_k-\psi_1(k)}_\infty<\frac{\varepsilon}{4\norm{x}_{\infty}C}.
\end{align}
Denote $D:=\sum_{i=1}^m \abs{z_i}$. Then, note that
\begin{align}\label{Eq:BoundD}
\sup_{k\geq 1}\norm{\psi_1(k)}_{\infty}
\leq \sum_{i=1}^m \abs{z_i}
= D.
\end{align}
Again, since $\mathbf{d}\in \mathcal{U}_f(1,\alpha)$,  there exists a trigonometric polynomial over $\mathcal{U}_f$, 
\begin{align*}
\psi_2(\cdot) = \sum_{j=1}^l w_j v_j^{(\cdot)},    
\end{align*}
where $v_j\in\mathcal{U}_f$ and $w_j\in \mathbb{C}$, $1\leq j\leq l$, such that
\begin{align}\label{Eq:dkEstimate}
\limsup_{n\rightarrow\infty}\frac{1}{A_{n}^\alpha}\sum_{k=0}^n A_{n-k}^{\alpha-1}\norm{d_k-\psi_2(k)}_\infty<\frac{\varepsilon}{4\norm{x}_\infty D}.
\end{align}

\noindent $($i$)$. Since $x \in L_1(\M)\cap\M$, we write
\begin{align}\label{Eq:AddSubstract}
b_kxd_k-\psi_1(k)x\psi_2(k)
=\left(b_k-\psi_1(k)\right)x d_k
+ \psi_1(k)x \left(d_k-\psi_2(k)\right).
\end{align}
Consequently, we have
\begin{eqnarray*}
& &\norm{M_n\left(\mathbf{b},\mathbf{d},\mathbf{1},T,\alpha\right)x - M_n\left(\psi_1, \psi_2,\mathbf{1},T,\alpha\right)x}_\infty \\
&=&\norm{\frac{1}{A_n^{\alpha}} \sum_{k=0}^n A_{n-k}^{\alpha-1}T^k\left(b_kxd_k-\psi_1(k)x\psi_2(k)\right)}_\infty\\
&\leq &\frac{1}{A_n^{\alpha}} \sum_{k=0}^n A_{n-k}^{\alpha-1}\norm{T^k\left(b_kxd_k-\psi_1(k)x\psi_2(k)\right)}_\infty\\
&\underset{(T\text{ is }DS)}
{\leq} &\frac{1}{A_n^{\alpha}} \sum_{k=0}^n A_{n-k}^{\alpha-1}\norm{b_kxd_k-\psi_1(k)x\psi_2(k)}_\infty\\
&\underset{(\text{Eq. \eqref{Eq:AddSubstract}})}{\leq}& \frac{1}{A_n^{\alpha}} \sum_{k=0}^n A_{n-k}^{\alpha-1}\norm{\left(b_k-\psi_1(k)\right)x d_k}_\infty
+ \frac{1}{A_n^{\alpha}} \sum_{k=0}^n A_{n-k}^{\alpha-1}\norm{\psi_1(k)x \left(d_k-\psi_2(k)\right)}_\infty\\
&\leq& \frac{1}{A_n^{\alpha}} \sum_{k=0}^n A_{n-k}^{\alpha-1}\norm{b_k-\psi_1(k)}_{\infty}\norm{x}_\infty \norm{d_k}_\infty\\
&&\qquad\qquad+ \frac{1}{A_n^{\alpha}} \sum_{k=0}^n A_{n-k}^{\alpha-1}\norm{\psi_1(k)}_{\infty}\norm{x}_{\infty} \norm{d_k-\psi_2(k)}_\infty\\ 
&\leq& \norm{x}_\infty \left(\sup_{k\geq 0}\norm{d_k}_\infty\right)\frac{1}{A_n^{\alpha}} \sum_{k=0}^n A_{n-k}^{\alpha-1}\norm{b_k-\psi_1(k)}_{\infty}\\
& & \qquad\qquad +\left(\sup_{k\geq 0}\norm{\psi_1(k)}_{\infty}\right)\norm{x}_{\infty}\frac{1}{A_n^{\alpha}} \sum_{k=0}^n A_{n-k}^{\alpha-1} \norm{d_k-\psi_2(k)}_\infty\\
&\leq & \left(\norm{x}_\infty C\right) \frac{1}{A_n^{\alpha}} \sum_{k=0}^n A_{n-k}^{\alpha-1}\norm{b_k-\psi_1(k)}_{\infty}
+ \left(D\norm{x}_{\infty}\right)\frac{1}{A_n^{\alpha}} \sum_{k=0}^n A_{n-k}^{\alpha-1} \norm{d_k-\psi_2(k)}_\infty.
\end{eqnarray*}
Taking $\limsup$ on both sides of the above equation and using  Eqs. \eqref{Eq:bkEstimate} and \eqref{Eq:dkEstimate}, we obtain
\begin{align*}
\limsup_{n\rightarrow\infty}\norm{M_n\left(\mathbf{b},\mathbf{d},\mathbf{1},T,\alpha\right)x - M_n\left(\psi_1, \psi_2,\mathbf{1},T,\alpha\right)x}_\infty <\frac{\varepsilon}{2}.   
\end{align*}
Consequently, there exists $n_0\in\mathbb{N}$, such that for all $n\geq n_0$, we have
\begin{align*}
\norm{M_n\left(\mathbf{b},\mathbf{d},\mathbf{1},T,\alpha\right)x - M_n\left(\psi_1, \psi_2,\mathbf{1},T,\alpha\right)x}_\infty <\varepsilon.
\end{align*}
By Lemma \ref{Lem:ConvergenceofTrigonometricPolyNomial}, notice that the sequence $\{M_n(\psi_1, \psi_2,\mathbf{1},T, \alpha)x\}_{n\geq 0}$ is $a.u.$ convergent. Hence, by Lemma \ref{Lem:DifferenceSmallAULemma}, the sequence $\{M_n(\mathbf{b}, \mathbf{d},\mathbf{1},T, \alpha)x\}_{n\geq 0}$ converges  $a.u.$ for every $x \in L_1(\M)\cap\M$. 

The same conclusion holds when $\mathbf{b}=\{b_k\}_{k\geq 0}$ is bounded, by an analogous argument. This proves $($i$)$ in the statement.

\vspace{2mm}

\noindent $($ii$)$. Fix $x \in L_1(\M)\cap\M$. We now prove that the sequence $\{M_n^{\mathbf{k}}(\mathbf{b}, \mathbf{d},\mathbf{1},T, \alpha)x\}_{n\geq 0}$ is $a.u.$ convergent. Notice that sequences $\psi_1=\{\psi_1(k)\}_{k\geq 0}$ and $\psi_2= \{\psi_2(k)\}_{k\geq 0}$ are elements in $ \mathcal{U}_f(1,\alpha)$. Therefore, by Lemma \ref{Lem:ConvergenceofTrigonometricPolyNomial}, the sequence $\{M_n(\psi_1, \psi_2,\mathbf{1},T,\alpha)x\}_{n\geq 0}$ is $a.u.$ convergent. In particular, the subsequence $\{M_{k_n}(\psi_1, \psi_2,\mathbf{1},T,\alpha)x\}_{n\geq 0}$ is $a.u.$ convergent.

\noindent Let
\begin{align*}
B_n(\psi_1, \mathbf{d}, 1, T, \alpha)x = \frac{1}{A_{k_n}^{\alpha}} \sum_{j=0}^n A_{n-j}^{\alpha-1}T^{k_j}(\psi_1(k_j)xd_{k_j}).
\end{align*}
Then, we have
\begin{align}\label{Eq:SubsequenceMinusBn}
\mathrm{I}
=&\norm{M_{k_n}(\psi_1, \psi_2, \mathbf{1},T,\alpha)x - B_n(\psi_1, \mathbf{d}, \mathbf{1}, T, \alpha)x}_\infty \\
\nonumber=& \norm{\frac{1}{A_{k_n}^{\alpha}} \sum_{j=0}^{k_n} A_{k_n-j}^{\alpha-1}T^{j}(\psi_1(j)x\psi_2(j)) - \frac{1}{A_{k_n}^{\alpha}} \sum_{j=0}^n A_{n-j}^{\alpha-1}T^{k_j}(\psi_1(k_j)xd_{k_j})}_\infty \\ 
\nonumber\leq & \mathrm{II}+\mathrm{III},
\end{align}
where
\begin{align*}
\mathrm{II}
=\norm{\frac{1}{A_{k_n}^{\alpha}} \sum_{j=0}^{k_n} A_{k_n-j}^{\alpha-1}T^{j}(\psi_1(j)x\psi_2(j)) - \frac{1}{A_{k_n}^{\alpha}} \sum_{j=0}^{k_n} A_{k_n-j}^{\alpha-1}T^{j}(\psi_1(j)xd_j)}_\infty    
\end{align*}
and 
\begin{align*}
\mathrm{III}
= \norm{\frac{1}{A_{k_n}^{\alpha}} \sum_{j=0}^{k_n} A_{k_n-j}^{\alpha-1}T^{j}(\psi_1(j)xd_j) - \frac{1}{A_{k_n}^{\alpha}} \sum_{j=0}^n A_{n-j}^{\alpha-1}T^{k_j}(\psi_1(k_j)xd_{k_j})}_\infty.     
\end{align*}
Since $T$ is $DS$ operator, we have
\begin{align*}
\mathrm{II}
\leq &\frac{1}{A_{k_n}^{\alpha}} \sum_{j=0}^{k_n} A_{k_n-j}^{\alpha-1}\norm{T^{j}\left(\psi_1(j)x\left(\psi_2(j)-d_j\right)\right)}_\infty \\
\leq &\frac{1}{A_{k_n}^{\alpha}} \sum_{j=0}^{k_n} A_{k_n-j}^{\alpha-1}\norm{\psi_1(j)}_\infty\norm{x}_{\infty}\norm{\psi_2(j)-d_j}_\infty \\
\leq & \left(D \norm{x}_\infty\right) \frac{1}{A_{k_n}^{\alpha}}\sum_{j =0}^{k_n} A_{k_n-j}^{\alpha-1}\norm{d_j - \psi_2(j)}_\infty.
\end{align*}
By taking $\limsup$ both sides of the above inequality, from Eq. \eqref{Eq:dkEstimate},  we have
\begin{align*}
\limsup_{n\rightarrow\infty}\,\mathrm{II}
\leq  \frac{\varepsilon}{4}.
\end{align*}

On the other hand,
\begin{align*}
\mathrm{III}
\leq & \norm{\frac{1}{A_{k_n}^{\alpha}} \sum_{j=0}^{n} \left(A_{k_n-k_j}^{\alpha-1}
-A_{n-j}^{\alpha-1}\right)T^{k_j}(\psi_1(k_j)xd_{k_j})}_\infty +\norm{\frac{1}{A_{k_n}^{\alpha}} {\overset{k_n}{\underset{j\not\in\mathbf{k}}{\underset{j=0}{\sum}}}} A_{k_n-j}^{\alpha-1} T^{j}(\psi_1(j)xd_{j})}_\infty \\
\leq & \mathrm{IV} + \mathrm{V},
\end{align*}
where 
\begin{align*}
\mathrm{IV}
=&\norm{\frac{1}{A_{k_n}^{\alpha}} \sum_{j=0}^{n} \left(A_{k_n-k_j}^{\alpha-1}-A_{n-j}^{\alpha-1}\right)T^{k_j}(\psi_1(k_j)xd_{k_j})}_\infty \\
\leq& \frac{1}{A_{k_n}^{\alpha}} \sum_{j=0}^{n} \left(A_{k_n-k_j}^{\alpha-1}-A_{n-j}^{\alpha-1}\right)\norm{T^{k_j}(\psi_1(k_j)xd_{k_j})}_\infty \\
\underset{(T\text{ is }DS)}{\leq}& \frac{1}{A_{k_n}^{\alpha}}\sum_{j=0}^{n} \left(A_{k_n-k_j}^{\alpha-1} - A_{n-j}^{\alpha-1}\right)\norm{\psi_1(k_j)}\norm{x}_\infty \norm{d_{k_j}}_\infty \\
\leq & \left(\left(\frac{1}{A_{k_n}^{\alpha}}\sum_{j=0}^{n} A_{k_n-k_j}^{\alpha-1} \right)- \frac{A_n^\alpha}{A_{k_n}^\alpha} \right)\left(D\norm{x}_\infty C\right) \\
\underset{(\text{Eq. \eqref{Eq:AnalphaInequality}})}{\lesssim_\alpha}& \left( \frac{1}{(k_n)^\alpha}\sum_{j=0}^n (k_n+1)^{\alpha-1} - \frac{n^\alpha}{\Gamma(\alpha+1)A_{k_n}^\alpha} \right)\left(D\norm{x}_\infty C\right)  \\
=& \left(\frac{(k_n+1)^{\alpha-1}(n+1)}{(k_n)^\alpha} - \frac{(k_n)^\alpha}{\Gamma(\alpha+1)A_{k_n}^\alpha} \frac{n^\alpha}{(k_n)^\alpha} \right)\left(D\norm{x}_\infty C\right) \\
\leq& \left( \frac{(k_n+1)^\alpha}{k_n^\alpha}- \frac{(k_n)^\alpha}{\Gamma(\alpha+1)A_{k_n}^\alpha} \frac{n^\alpha}{(k_n)^\alpha}\right)\left(D\norm{x}_\infty C\right)\qquad(\text{since }n+1\leq k_n+1), 
\end{align*}
and
\begin{align*}
\mathrm{V} 
= \norm{\frac{1}{A_{k_n}^{\alpha}} {\overset{k_n}{\underset{j\not\in\mathbf{k}}{\underset{j=0}{\sum}}}} A_{k_n-j}^{\alpha-1} T^{j}(\psi_1(j)xd_{j})}_\infty 
&\leq \frac{1}{A_{k_n}^{\alpha}} {\overset{k_n}{\underset{j\not\in\mathbf{k}}{\underset{j=0}{\sum}}}} A_{k_n-j}^{\alpha-1} \norm{T^{j}(\psi_1(j)xd_{j})}_\infty \\
&\underset{(T\text{ is }DS)}{\leq}\frac{1}{A_{k_n}^{\alpha}} {\overset{k_n}{\underset{j\not\in\mathbf{k}}{\underset{j=0}{\sum}}}} A_{k_n-j}^{\alpha-1} \norm{\psi_1(j)}_\infty\norm{x}_\infty\norm{d_{j}}_\infty \\
&{\leq}\left(\frac{1}{A_{k_n}^{\alpha}} {\overset{k_n}{\underset{j\not\in\mathbf{k}}{\underset{j=0}{\sum}}}} A_{k_n-j}^{\alpha-1}\right) C\norm{x}_\infty D \\
&\lesssim_\alpha \frac{(k_n+1)^{\alpha-1}(k_n-n)}{(k_n)^{\alpha}} C\norm{x}_\infty D \\
&= \left( 1+ \frac{1}{k_n}\right)^{\alpha-1} \left(1-\frac{n}{k_n}\right)C\norm{x}_\infty D.
\end{align*}
Since $\frac{k_n}{n} \to 1$ as $n \to \infty$ $($because $\mathbf{k}$ has density $1)$, by taking $\limsup$ on both sides of $\mathrm{IV}$ and $\mathrm{V}$, we have
\begin{align*}
\limsup_{n\rightarrow\infty}\,\mathrm{IV} =0
\qquad\text{ and }\qquad
\limsup_{n\rightarrow\infty}\,\mathrm{V} =0.    
\end{align*}
Consequently, by Eq. \eqref{Eq:SubsequenceMinusBn}, we have
\begin{align*}
\limsup_{n\rightarrow\infty} \norm{M_{k_n}(\psi_1, \psi_2, \mathbf{1},T,\alpha)x - B_n(\psi_1, \mathbf{d}, \mathbf{1}, T, \alpha)x}_\infty<\frac{\varepsilon}{4}.    
\end{align*}
Thus, there exists $N \in \mathbb{N}$ such that for all $n \geq N$, we have
\begin{align*}
\norm{M_{k_n}(\psi_1, \psi_2, \mathbf{1},T,\alpha)x - B_n(\psi_1, \mathbf{d}, \mathbf{1}, T, \alpha)x}_\infty<\varepsilon.   
\end{align*}
Therefore, by Lemma \ref{Lem:DifferenceSmallAULemma}, it follows that the sequence $\left\{B_n(\psi_1,\mathbf{d},\mathbf{1},T,\alpha)x \right\}_{n\geq 0}$ is $a.u.$ convergent. 

\vspace{2mm}

Now, it remains to show that $\{M_n^{\mathbf{k}}(\mathbf{b}, \mathbf{d},1,T, \alpha)x\}_{n\geq 0}$ is $a.u.$ convergent. Let us define 
\begin{align*}
B_n(\mathbf{b}, \mathbf{d}, \mathbf{1}, T, \alpha)x 
= \frac{1}{A_{k_n}^{\alpha}} \sum_{j=0}^n A_{n-j}^{\alpha-1}T^{k_j}(b_{k_j}xd_{k_j}).
\end{align*}
Then, 
\begin{align*}
&\norm{B_n(\psi_1, \mathbf{d}, \mathbf{1}, T, \alpha)x - B_n(\mathbf{b}, \mathbf{d}, \mathbf{1}, T, \alpha)x}_\infty \\ 
&= \norm{\frac{1}{A_{k_n}^{\alpha}} \sum_{j=0}^n A_{n-j}^{\alpha-1}T^{k_j}(\psi_1(k_j)xd_{k_j}) - \frac{1}{A_{k_n}^{\alpha}} \sum_{j=0}^n A_{n-j}^{\alpha-1}T^{k_j}(b_{k_j}xd_{k_j}) }_\infty \\ 
&\underset{(T\text{ is }DS)}{\leq} \frac{1}{A_{k_n}^{\alpha}} \sum_{j=0}^n A_{n-j}^{\alpha-1} \norm{b_{k_j} - \psi_1(k_j)}_\infty \norm{x}_\infty C \\
&\leq \frac{1}{A_{k_n}^{\alpha}} \sum_{j=0}^{n} A_{n-j}^{\alpha-1} \norm{b_{k_j} - \psi_1(k_j)}_\infty \norm{x}_\infty C \\
&= \frac{1}{A_{k_n}^{\alpha}} \sum_{j=0}^{n} \frac{A_{n-j}^{\alpha-1}}{A_{k_n-k_j}^{\alpha-1}} {A_{k_n-k_j}^{\alpha-1}}\norm{b_{k_j} - \psi_1(k_j)}_\infty \norm{x}_\infty C \\
&\leq \frac{1}{A_{k_n}^{\alpha}} \sum_{j=0}^{n}  {A_{k_n-k_j}^{\alpha-1}}\norm{b_{k_j} - \psi_1(k_j)}_\infty \norm{x}_\infty C \qquad(\text{since }\frac{A_{n-j}^{\alpha-1}}{A_{k_n-k_j}^{\alpha-1}}\leq 1)\\
&\leq \frac{1}{A_{k_n}^{\alpha}} \sum_{j=0}^{k_n}  {A_{k_n-j}^{\alpha-1}}\norm{b_{j} - \psi_1(j)}_\infty \norm{x}_\infty C.
\end{align*}
By taking $\limsup$ on both sides of the above equation and using Eq. \eqref{Eq:bkEstimate}, we have
\begin{align*}
\limsup_{n\rightarrow\infty}
\norm{B_n(\psi_1, \mathbf{d}, \mathbf{1}, T, \alpha)x - B_n(\mathbf{b}, \mathbf{d}, \mathbf{1}, T, \alpha)x}_\infty 
<\frac{\varepsilon}{4}.
\end{align*}
Consequently, there exists $N \in \mathbb{N}$ such that for all $n \geq N$, we have
\begin{align*}
\norm{B_n(\psi_1, \mathbf{d}, \mathbf{1}, T, \alpha)x - B_n(\mathbf{b}, \mathbf{d}, \mathbf{1}, T, \alpha)x}_\infty 
<\varepsilon.
\end{align*}
Therefore, by Lemma \ref{Lem:DifferenceSmallAULemma}, it follows that the  sequence $\{ B_n(\mathbf{b}, \mathbf{d}, 1, T, \alpha)x\}_{n\geq 0}$ is $a.u.$ convergent. On the other hand, 
\begin{align*}
M_n^{\mathbf{k}}(\mathbf{b}, \mathbf{d}, \mathbf{1},T, \alpha)x 
=\frac{A_{k_n}^{\alpha}}{A_n^{\alpha}}B_n(\mathbf{b}, \mathbf{d}, \mathbf{1}, T, \alpha)x.    
\end{align*}
By Eq. \eqref{Eq:CalphaCoeffient}, notice that
\begin{align*}
\lim_{n\rightarrow\infty}
\frac{A_{k_n}^{\alpha}}{A_n^{\alpha}}
=\left(\lim_{n\rightarrow\infty}
\frac{A_{k_n}^{\alpha} \Gamma(\alpha+1)}{{k_n}^\alpha}\frac{n^\alpha}{\Gamma(\alpha+1)A_n^{\alpha}} \right
)\left(\lim_{n\rightarrow\infty} \frac{k_n^\alpha}{n^\alpha}\right)
=\lim_{n\rightarrow\infty} \frac{k_n^\alpha}{n^\alpha}
=1,   
\end{align*}
where the last equality follows from the density of $\mathbf{k}$ is $1$. Since the sequence $\{ B_n(\mathbf{b}, \mathbf{d}, 1, T, \alpha)x\}_{n\geq 0}$ is $a.u.$ convergent and $\lim_{n\rightarrow\infty}
\frac{A_{k_n}^{\alpha}}{A_n^{\alpha}}=1$, it follows that 
the sequence $\{M_n^{\mathbf{k}}(\mathbf{b}, \mathbf{d},\mathbf{1},T, \alpha)x\}_{n\geq 0}$ is $a.u.$ convergent. 

The same conclusion holds when $\mathbf{b}=\{b_k\}_{k\geq 0}$ is bounded, by an analogous argument.
This completes the proof.
\end{proof}

\begin{thm}\label{Thm:Closedness}
For $\alpha>0$ and $1< q\leq\infty$, let $\mathbf{b}=\{b_k\}_{k\geq 0}\in W_{q}^\alpha(\mathcal{Z}(\M))$. Let $q^\prime$ denote the conjugate index of $q$, that is, 
$\frac{1}{q}+\frac{1}{q^\prime}=1$. Let $T$ be a $DS^{+}$ operator on $\M$. Then the following statements hold:

\noindent $($a$)$ The set 
\begin{align}\label{Eq:BanachPrinciple1}
\mathcal{A}_{\Phi, 1}=\{x\in L_\Phi(\M): \{M_n(\mathbf{b}, \mathbf{d}, \mathbf{1}, T, \alpha) x\}_{n\geq 0} \text{ converges }b.a.u. \,\,(\text{resp. }a.u.)\}
\end{align}
is closed in $L_\Phi(\M)$ in each of the following cases:
\begin{enumerate}[label=(\roman*)]
\item For $\alpha>0$, $\Phi$ is a $p$-convex Orlicz  function with $\max \{\frac{q^\prime}{\alpha}, q^\prime\}< p<\infty$ $($resp. $\max \{\frac{2q^\prime}{\alpha}, 2q^\prime\}< p<\infty)$. 
\item For $\alpha>1$, $\Phi$ is a $p$-convex Orlicz  function with $p=q^\prime<\infty$ $($resp. $p=2q^\prime<\infty)$.
\end{enumerate}
Moreover, if $\widetilde{\Phi}(t)=\Phi(\sqrt{t})$ is $p$-convex, then $b.a.u.$ convergent in Eq. \eqref{Eq:BanachPrinciple1} will be replaced by $a.u.$ convergent. 

\noindent $(b)$  The set 
\begin{align*}
\mathcal{A}_{\Phi, 2}=\{x\in L_\Phi(\M):\,\{ M_{n}^{\mathbf{k}}(\mathbf{b}, \mathbf{d}, \mathbf{1}, T, \alpha) x\}_{n\geq 0} \text{ converges }b.a.u.\,\, (\text{resp. }a.u.)\}
\end{align*}
is closed in $L_\Phi(\M)$ if 
$\mathbf{k}=\{k_j\}_{j\geq 0}$ is a strictly increasing sequence of natural numbers with lower density $d>0$, and $\alpha$, $p$ and $\Phi$ satisfy the conditions either $(i)$ or $(ii)$ in part $(a)$.
\end{thm}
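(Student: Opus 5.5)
The plan is the standard closedness argument behind the noncommutative Banach principle (cf. \cite{Li12}): b.u.e.m. at $0$ on $L_\Phi(\M)$ makes the set of convergence closed. First I reduce to the central-weight situation of Theorem \ref{Thm:buemAt0ForM_nbTx}. Here $\mathbf{d}$ is implicitly a bounded $\mathcal{Z}(\M)$-valued sequence, as in Theorem \ref{Thm:MaximalInequalityForVectorWeightedSequenceSpace}. By Remark \ref{Rem:Commutivityofoperators}, $b_kxd_k=(b_kd_k)x$, so $M_n(\mathbf{b},\mathbf{d},\mathbf{1},T,\alpha)=M_n(\mathbf{bd},\mathbf{1},\mathbf{1},T,\alpha)$ with $\mathbf{bd}=\{b_kd_k\}_{k\geq 0}\in W_q^\alpha(\mathcal{Z}(\M))$. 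The same identity holds for $M_n^{\mathbf{k}}$. Theorem \ref{Thm:buemAt0ForM_nbTx}(a) (resp. (b)) then gives that the family $\{M_n\}$ (resp. $\{M_n^{\mathbf{k}}\}$) is b.u.e.m. (resp. u.e.m.) at $0$ on $L_\Phi(\M)$ in cases (i) and (ii), and under the extra hypothesis on $\widetilde{\Phi}$.

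Next, let $x_m\in\mathcal{A}_{\Phi,1}$ with $\norm{x_m-x}_\Phi\to 0$, and fix $\varepsilon>0$. I will show that $\{M_nx\}$ is b.a.u. Cauchy: there is a projection $e$ with $\tau(e^\perp)\leq\varepsilon$ and $\norm{e(M_nx-M_kx)e}_\infty\to 0$ as $n,k\to\infty$.

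\begin{itemize}
\item For each $j\geq 1$, b.u.e.m. with parameters $(\varepsilon 2^{-j-1},1/j)$ yields $\gamma_j>0$. Choose $m_j$ with $\norm{x-x_{m_j}}_\Phi<\gamma_j$. This gives a projection $e_j$ with $\tau(e_j^\perp)\leq\varepsilon2^{-j-1}$ and $\sup_n\norm{e_jM_n(x-x_{m_j})e_j}_\infty\leq 1/j$.
\item Since $\{M_nx_{m_j}\}$ converges b.a.u., there is $f_j$ with $\tau(f_j^\perp)\leq\varepsilon2^{-j-1}$ and $\{f_jM_nx_{m_j}f_j\}$ Cauchy in $\norm{\cdot}_\infty$.
\item Set $e=\bigwedge_j(e_j\wedge f_j)$, so that $\tau(e^\perp)\leq\varepsilon$.
\item Since $e\leq e_j$ and $e\leq f_j$, compressing by $e$ gives $\limsup_{n,k}\norm{e(M_nx-M_kx)e}_\infty\leq 2/j$ for every $j$.
\end{itemize}

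Hence $\{eM_nxe\}$ is Cauchy in $\M$ for every such $e$. The a.u. case is identical, using $\norm{(\cdot)e}_\infty$ and u.e.m.

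The main obstacle is passing from ``b.a.u. Cauchy'' to ``b.a.u. convergent to some $\widehat{x}\in L_0(\M)$''. For this I would invoke the completeness result used in \cite{Li12} (and \cite{CL13, BS24}): a b.a.u. (resp. a.u.) Cauchy sequence in $L_0(\M)$ converges b.a.u. (resp. a.u.). An alternative first attempt is to identify the limit directly. By Theorem \ref{Thm:ConvergenceInLpNormHongType}, or by convergence in measure, $M_nx$ converges in measure to some $\widehat{x}$, and the b.a.u. Cauchy property together with closedness of the measure topology then forces b.a.u. convergence to $\widehat{x}$. A smaller point is that the hypotheses give b.u.e.m. only through Theorem \ref{Thm:buemAt0ForM_nbTx}. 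In case (ii) of part (a), with $p=q'$ and $\alpha>1$, this uses the weak-type bound of Theorem \ref{Thm:OneSidedWeakTypeInequality}(ii) in place of part (i); no other change is needed.

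Part (b) follows verbatim once Theorem \ref{Thm:buemAt0ForM_nbTx}(b) supplies b.u.e.m. for $\{M_n^{\mathbf{k}}\}$ under lower density $d>0$.
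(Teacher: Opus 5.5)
Your proposal is correct and is essentially the paper's argument. The paper simply cites the closedness part of Litvinov's noncommutative Banach principle \cite[Theorem 2.1]{Li12} together with the b.u.e.m. (resp. u.e.m.) property from Theorem \ref{Thm:buemAt0ForM_nbTx}; your $\varepsilon 2^{-j}$ argument followed by the b.a.u./a.u. completeness of $L_0(\M)$ is just the proof of that cited theorem, and your identity $M_n(\mathbf{b},\mathbf{d},\mathbf{1},T,\alpha)=M_n(\mathbf{bd},\mathbf{1},\mathbf{1},T,\alpha)$ via centrality makes explicit a reduction the paper leaves implicit. One small caveat: Theorem \ref{Thm:ConvergenceInLpNormHongType} is not available in your alternative route, since here the weights are only in $W_q^\alpha(\mathcal{Z}(\M))$ and $x\in L_\Phi(\M)$; either of your other routes suffices (b.a.u. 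Cauchy implies Cauchy in measure and then you identify the limit, or the completeness citation).
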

\begin{proof}
The proof follows directly from \cite[Theorem 2.1]{Li12}, together with Theorem \ref{Thm:buemAt0ForM_nbTx} and the fact that both $\{M_{n}(\mathbf{b}, \mathbf{d}, \mathbf{1}, T, \alpha)\}_{n\geq 0}$ and $\{M_{n}^{\mathbf{k}}(\mathbf{b}, \mathbf{d}, \mathbf{1}, T, \alpha)\}_{n\geq 0}$ are sequences of additive maps from the Banach space $L_\Phi(\M)$ into $L_0(\M)$. Thus, we omit the details. 
\end{proof}

We are now in a position to prove the main result $($Theorem \ref{Thm:AUconvergenceInOrliczSpace}$)$ of the section, which establishes the $b.a.u.$ convergence of the weighted ergodic averages on noncommutative Orlicz spaces.

\begin{proof}[\textbf{Proof of Theorem \ref{Thm:AUconvergenceInOrliczSpace}}]
Suppose $\mathbf{d}=\{d_k\}_{k\geq 0}$ is bounded. Denote 
\begin{align*}
C := \sup_{k \geq 0} \norm{d_k}_\infty < \infty.  
\end{align*}
Let 
\begin{align*}
\mathcal{A}^{\mathbf{k}}_\Phi 
= \{x \in L_{\Phi}(\M) : \{M_n^\mathbf{k}(\mathbf{b}, \mathbf{d}, \mathbf{1}, T, \alpha) x\}_{n\geq 0}  \text{ converges } b.a.u.\}.    
\end{align*}
By Theorem \ref{Thm:ConvergentForDenseSet}, we have $L_1(\mathcal{M}) \cap \mathcal{M} \subseteq \mathcal{A}^{\mathbf{k}}_\Phi$.
Since $L_1(\mathcal{M}) \cap \mathcal{M}$ is dense in $L_{\Phi}(\M)$, by Theorem \ref{Thm:Closedness}, we have $\mathcal{A}^{\mathbf{k}}_\Phi= L_{\Phi}(\M)$.

Fix $x \in L_{\Phi}(\M)$. 
We now show that the $b.a.u.$ limit of the sequence $\{M_n^\mathbf{k}(\mathbf{b}, \mathbf{d}, \mathbf{1}, T, \alpha)x\}_{n\geq 0}$ belongs to $L_\Phi(\M)$. Indeed, since the sequence $\{M_n^\mathbf{k}(\mathbf{b}, \mathbf{d}, \mathbf{1}, T, \alpha)x\}_{n\geq 0}$ converges $b.a.u.$, it also converges in the measure topology.
Since $L_0(\M)$ is complete with respect to the measure topology, there exists $x_0\in L_0(\M)$ such that $M_n^{\mathbf{k}}(\mathbf{b},\mathbf{d},\mathbf{1},T,\alpha)x$ converges to $x_0$ in measure. Moreover, the $b.a.u.$ limit coincides with $x_0$. It remains to show that $x_0\in L_\Phi(\M)$.

Note that 
\begin{align*}
\norm{M_n^\mathbf{k}(\mathbf{b}, \mathbf{d}, \mathbf{1}, T, \alpha)x}_{\Phi}
&\underset{(T\text{ is }DS)}{\leq} \frac{1}{A_n^{\alpha}} \sum_{j=0}^n A_{n-j}^{\alpha -1} \norm{b_{k_j}}_{\infty} \norm{x}_{\Phi}\norm{d_{k_j}}_{\infty} \\
&\leq \left(C \norm{x}_{\Phi}\right) \frac{1}{A_n^{\alpha}} \sum_{j=0}^n A_{n-j}^{\alpha -1} \norm{b_{k_j}}_{\infty} \\
&= \left(C \norm{x}_{\Phi}\right) \frac{1}{A_n^{\alpha}} \sum_{j=0}^n \left(\frac{A_{n-j}^{\alpha -1}}{A_{k_n-k_j}^{\alpha-1}}\right) A_{k_n-k_j}^{\alpha-1}\norm{b_{k_j}}_{\infty} \\
&\leq \left(C \norm{x}_{\Phi}\right) \frac{1}{A_n^{\alpha}} \sum_{j=0}^n  A_{k_n-k_j}^{\alpha-1}\norm{b_{k_j}}_{\infty}
\qquad\left(\text{since }\frac{A_{n-j}^{\alpha -1}}{A_{k_n-k_j}^{\alpha-1}}\leq 1\right)\\
&\leq \left(C \norm{x}_{\Phi}\right) \left( \frac{1}{A_n^{\alpha}} \sum_{j=0}^{k_n} A_{k_n-j}^{\alpha -1} \norm{b_j}_{\infty} \right)\\
&= \left(C \norm{x}_{\Phi}\right) \frac{A_{k_n}^{\alpha}}{A_n^\alpha}\left( \frac{1}{A_{k_n}^{\alpha}} \sum_{j=0}^{k_n} A_{k_n-j}^{\alpha -1} \norm{b_j}_{\infty} \right)
\\
&\underset{(\text{Eq. \eqref{Eq:HolderInequality}})}{\leq} \left(C \norm{x}_{\Phi}\right) \frac{A_{k_n}^{\alpha}}{A_n^\alpha}\left( \frac{1}{A_{k_n}^{\alpha}} \sum_{j=0}^{k_n} A_{k_n-j}^{\alpha -1} \norm{b_j}_{\infty}^q \right)^\frac{1}{q}\left( \frac{1}{A_{k_n}^{\alpha}} \sum_{j=0}^{k_n} A_{k_n-j}^{\alpha -1}  \right)^\frac{1}{q^\prime}\\
&\leq \frac{A_{k_n}^{\alpha}}{A_n^\alpha}\left(C \norm{x}_{\Phi}\right)  \norm{b}_{W_q^\alpha(\M)}\\
&\underset{(\text{Eq. \eqref{Eq:AknByAn}})}{\lesssim_\alpha} C \norm{x}_{\Phi}  \norm{b}_{W_q^\alpha(\M)}.
\end{align*}
Thus, the set $\{M_n^\mathbf{k}(\mathbf{b}, \mathbf{d}, \mathbf{1}, T, \alpha)x:n\geq 0 \}$ lies inside a closed ball of radius $C_\alpha C \norm{x}_{\Phi}\norm{b}_{W_q^\alpha}$ of $L_\Phi(\M)$ for some $C_\alpha>0$. We know that the unit ball of $L_\Phi(\M)$ is closed under measure topology $($see, \cite[Proposition 5.14]{DDP93}$)$. Hence, 
\begin{align*}
\norm{x_0}_\Phi \leq C_\alpha C\norm{b}_{W_q^\alpha} \norm{x}_{\Phi},    
\end{align*}
that is, $x_0 \in L_{\Phi}(\M)$.

The same conclusion holds when $\mathbf{b}=\{b_k\}_{k\geq 0}$ is bounded, by an analogous argument. This completes the proof of the theorem.
\end{proof}

\begin{rem}
We remark that our proof of Theorem \ref{Thm:AUconvergenceInOrliczSpace} $($subsequential part$)$ does not extend to the range $0<\alpha<1$, as the estimates used in our argument are no longer sufficient when $0<\alpha<1$. Whether a corresponding result holds for $0<\alpha<1$ would require a different approach and remains an interesting question for future investigation.
\end{rem}

\noindent \textbf{Acknowledgment: }
A. Chattopadhyay is supported by the Core Research Grant (CRG), File No: CRG/2023/004826, by the Science and Engineering Research Board(SERB), Department of Science \& Technology (DST).
D. De acknowledges support from the INSPIRE Research Grant (Ref. No. DST/INSPIRE/04/2024/003789), DST, Government of India. D. Shaw gratefully acknowledges the financial support provided by IIT Guwahati, Government of India. Also, all the authors would like to acknowledge the FIST Grant No.: SR/FST/MS-II/2024/183 (C) by the Department of Science \& Technology (DST), Government of India.

\end{document}